\newcounter{marginnote}
\DeclareMathAlphabet{\mathpzc}{OT1}{pzc}{m}{it}
\newtheorem{theorem}{Theorem}[subsection]
\newtheorem{corollary}[theorem]{Corollary}
\newtheorem{lemma}[theorem]{Lemma}
\newtheorem{proposition}[theorem]{Proposition}
\newtheorem{quasi-theorem}[theorem]{Quasi-Theorem}
\theoremstyle{definition}
\newtheorem{definition}[theorem]{Definition}
\newtheorem{construction}[theorem]{Construction}
\newtheorem{example}[theorem]{Example}
\newtheorem{blank remark}[theorem]{}
\theoremstyle{remark}
\newtheorem{rem1}[theorem]{Remark}
\newtheorem{not1}[theorem]{Notation}
\newcommand{\PP}{\mathbb{P}}         
\newcommand{\RR} {{\mathbb R}}		
\newcommand{\ZZ} {{\mathbf Z}}		
\def\setminus{\smallsetminus}
\DeclareMathOperator{\conv}{conv}
\DeclareMathOperator{\Aut}{Aut}
\DeclareMathOperator{\Area}{Area}
\DeclareMathOperator{\val}{val}
\DeclareMathOperator{\floor}{floor}
\newcommand{\cal}{\mathcal}
\def\cM{{\cal M}}
\newcommand{\Mbar}{\overline{\cM}\vphantom{\cM}}
\def\trop{\mathrm{trop}}
\def\floor{\mathrm{floor}}
\def\mult{\mathrm{mult}}
\def\di{\mathrm{div}}
\newcommand{\rect}{\mathsf{rect}}
\newcommand{\sU}{\mathsf{U}}
\newcommand{\sD}{\mathsf{D}}
\newcommand{\sR}{\mathsf{R}}
\newcommand{\brak}[2]{\langle #1, #2 \rangle}
\newcommand{\dist}{\operatorname{dist}}
\newcommand{\argmin}{\operatorname{argmin}}
\def\blfootnote{\xdef\@thefnmark{}\@footnotetext}
\title{Counting tropical curves in $\PP^1\times\PP^1$: computation {\it \&} polynomiality properties}
\date{}
\author[Corey]{Daniel Corey}
\address{Daniel Corey \\ Technische Universit\"at Berlin\\ Institut f\"ur Mathematik}
\email{\href{mailto:corey@math.tu-berlin.de}{corey@math.tu-berlin.de}}
\author[Markwig]{Hannah Markwig}
\address{Hannah Markwig \\ Fachbereich Mathematik\\
Universit\"at T\"ubingen}
\email{\href{mailto:hannah@math.uni-tuebingen.de}{hannah@math.uni-tuebingen.de}}
\author[Ranganathan]{Dhruv Ranganathan}
\address{Dhruv Ranganathan \\ Department of Mathematics\\
University of Cambridge}
\email{\href{mailto:dr508@cam.ac.uk}{dr508@cam.ac.uk}}
\begin{document}

\begin{abstract}
Counts of curves in $\PP^1\times \PP^1$ with fixed contact order with the toric boundary and satisfying point conditions can be determined with tropical methods \cite{Mi03}. If we require that our curves intersect the zero- and infinity-section only in points of contact order $1$, but allow arbitrary contact order for the zero- and infinity-fiber, the corresponding numbers reveal beautiful structural properties such as piecewise polynomiality, similar to the case of double Hurwitz numbers counting covers of $\PP^1$ with special ramification profiles over zero and infinity \cite{AB17}. This result was obtained using the floor diagram method to count tropical curves.

Here, we expand the tropical tools to determine counts of curves in $\PP^1\times\PP^1$. We provide a computational tool (building on \textsc{Polymake} \cite{polymake}) that determines such numbers of tropical curves for any genus and any contact orders via a straightforward generalization of Mikhalkin's lattice path algorithm. The tool can also be used for other toric surfaces. To enable efficient computations also by hand, we introduce a new counting tool (for the case of rational curves with transverse contacts with the infinity section) which can be seen as a combination of the floor diagram and the lattice path approach: subfloor diagrams. We use both our computational tool and the subfloor diagrams for experiments revealing structural properties of these counts.

We obtain first results on the (piecewise) polynomial structure of counts of rational curves in $\PP^1\times\PP^1$ with arbitrary contact orders on the zero- and infinity-fiber and restricted choices for the contact orders on the zero- and infinity-section.

\noindent \textbf{MSC 2020 Classification}: 14T90 (primary), 14N10, 14M25, 05E14 (secondary).

\noindent \textbf{Keywords}: enumerative geometry, floor diagrams, lattice paths, tropical curves. 
\end{abstract}

\maketitle

\vspace{-0.2in}

\setcounter{tocdepth}{1}

\section{Introduction}

The enumerative geometry of the pair $(\mathbb P^1|0,\infty)$ concerns counts of covers of $\mathbb P^1$ by algebraic curves of a fixed genus with prescribed ramification conditions at $0$ and $\infty$. It is a highly structured theory, and is closely related to intersection theory on the moduli space of curves and the double ramification cycle~\cite{BSSZ,CMR22,GJV05,GV05,JPPZ}. The simplest invariants in this theory are {\it double Hurwitz numbers}. Let $\mathbf x$ be a vector of length $n$ with vanishing sum. The invariant $\mathsf{H}_g(\mathbf x)$ concerns the number of maps from $n$ pointed genus $g$ curves to $\mathbb P^1$ such that the zero/pole order at the marked point $p_i$ is the $i^{\mathrm{th}}$ entry in $\mathbf x$. The Riemann--Hurwitz formula determines the total amount of additional ramification. If all additional ramification is forced to be simple, and if the locations of the images of the ramification points are fixed and generic in $\mathbb P^1$, the automorphism--weighted count of covers is $\mathsf{H}_g(\mathbf x)$. 

The first place where the rich structure of double Hurwitz numbers  can be seen is in the {\it polynomiality properties} of the invariants, via a beautiful result of Goulden, Jackson, and Vakil. They prove that as the vector $\mathbf x$ varies in the space of vectors with vanishing sum, the rational number $\mathsf{H}_g(\mathbf x)$ is naturally a {\it polynomial function} on the chambers of a hyperplane arrangement~\cite{GJV05}. The polynomiality properties have been studied in detail through tropical geometry~\cite{CJM1,CJM2} and intersection theory~\cite{CM14,SSV}. 

Among the numerous interesting results on double Hurwitz numbers, two are especially relevant to us here.  First, the double Hurwitz numbers can be practically calculated and studied through tropical geometry~\cite{CJM1}. Second, the piecewise polynomiality of the numbers reflects the polynomiality of the double ramification cycle~\cite{BSSZ,CM14}. 

\subsection{Context: polynomiality in higher dimensions} The purpose of this article is explore the geometry of the pair $(\mathbb P^1\times\mathbb P^1|D)$ where $D$ is the toric boundary divisor. The tropical correspondence theorem of Mikhalkin again gives a practical way to calculate and study these numbers~\cite{Mi03}. The higher dimensional replacement for the double ramification cycle, sometimes referred to as the {\it double double}, is an object of significant recent and ongoing interest~\cite{Herr,HMPPS,HPS19,HS21,MR21}. Although it has received attention, its basic properties have not been fully understood. 

In the case of $\mathbb P^1\times\mathbb P^1$, the role of the invariants is played by logarithmic Gromov--Witten invariants, which are modeled on counting curves of a fixed genus $g$ with fixed tangency order with the $0$ and $\infty$ fibers and sections of this surface\footnote{Of course, the terminology of section and fiber is interchangeable in this geometry.}. By additionally imposing point conditions, we obtain invariants. Our guiding question is the behaviour of these invariants as the tangency orders vary. Ardila and Brugall\'e proved that if the contact orders along the fiber directions are generic, then the stationary logarithmic Gromov--Witten theory is piecewise polynomial in the entries of the remaining contact data~\cite{AB17}. 

The question we raise and explore in this paper is what happens for non-generic contact orders. The higher double ramification cycle fails to be a polynomial class, and we view the polynomiality properties of the counts as a way in which to explore this failure. We explain the link to logarithmic intersection theory in \S\ref{sec: log-int}.

\subsection{Results and techniques} We formulate the counts precisely. The class of a curve in $\PP^1\times\PP^1$ can be written as $aB+bF$, where $B$ is a section and $F$ is a fiber. We say it has bidegree $(a,b)$. 

Our goal is to study how the invariants behave as some of the discrete parameters are allowed to move; we work in genus $0$. Precisely, fix $a$, the number $n$ of contact points with the $0$-fiber, and the number $m$ of contact points with the $\infty$-fiber. View the entries of the partition $\nu_1=(\nu_{11},\ldots,\nu_{1n})$ of contact orders with the $0$-fiber and the entries of the partition $\nu_2=(\nu_{21},\ldots,\nu_{2m})$ of contact orders with the $\infty$-fiber as variables. We combine the two partitions $\nu_1$ and $\nu_2$ into a pair $\nu=(\nu_1,\nu_2)$ giving the contact orders with the boundary fibers. The contact orders of the boundary sections are similarly given by a pair $\mu$. Logarithmic Gromov--Witten theory of $\mathbb P^1\times\mathbb P^1$ associates to these data an invariant $N(\mu,\nu)$, see~Definition \ref{def-logGWI}. If $\mu=((1^a),(1^a))$, the work of Ardila--Brugall\'e mentioned above shows that the invariants exhibit polynomiality. The results of this paper take first steps to generalize to non-trivial contact orders $\mu$ with the boundary sections.

\noindent
\textbf{New computational and combinatorial tools.} We provide two new tools to study these curve counts. Fundamentally, we build on Mikhalkin's correspondence theorem~\cite{Mi03} and the computational methods introduced therein. We immediately transfer the count into a tropical one, and focus on the problem of enumerating tropical curves. First, we provide a computational tool (building on \textsc{Polymake} \cite{polymake}) that computes such numbers of tropical curves for any genus and any contact orders via a straight-forward generalization of Mikhalkin's lattice path algorithm~\cite{Mi03}. The tool can also be used for other toric surfaces. 

Second, we introduce a new combinatorial tool (for the case of rational curves with transverse contacts with the infinity section) which generalizes the two best known tools for enumerating tropical curves: (i) the floor diagram approach of Brugall\'e--Mikhalkin, using which the Ardila--Brugall\'e result was proved~\cite{AB17,BM,BM08} and the lattice path algorithm in Mikhalkin's original paper.

We use both our computational tool and the subfloor diagrams for experiments revealing structural properties of these counts.

\noindent
\textbf{New results on polynomiality.} By using the above tools, we obtain first results on the piecewise polynomial structure of counts of rational curves in $\PP^1\times\PP^1$ with arbitrary contact orders on the zero- and infinity-fiber and restricted choices for the contact orders on the zero- and infinity-section. We discover two new results on polynomiality:
\begin{enumerate}[(i)]
\item In Theorem~\ref{maintheorem1}, we find a new polynomiality property for invariants with tangency profiles $(c), (1^{c}), (x_1,\ldots,x_n), (d)$, i.e., two opposing sides have transverse and maximal tangency respectively, and the next two opposing sides have maximal and variable tangency respectively. The invariants are polynomial for generic choices of the $x_i$'s.
\item In Theorem \ref{maintheorem2}, we find a new polynomiality property when invariants have transverse contact orders with the $\infty$-section, tangency at most $2$ with the $0$-section, and variable tangency along the fibers, The invariants are polynomial in the entries of the variable tangencies. 
\end{enumerate}

The results together suggest that aspects of the polynomiality survive in the $2$-dimensional case, and we hope this motivates and paves the way for further study. Throughout the paper, the \textsc{Polymake} computations and the new subfloor diagram techniques play a key role in proving these results. We are able to provide numerous examples throughout to showcase the utility of the computational results. 

\subsection{Broader context and nearby mathematics} The correspondence theorems of Mikhalkin and Nishinou--Siebert allow one to transfer enumerative problems of curves in toric varieties to combinatorics, i.e., the enumeration of tropical curves~\cite{Mi03,NS06}. The enumeration of tropical curves is itself a very difficult problem, even when all tangency orders are generic. Mikhalkin gave a beautiful algorithm to solve this problem in dimension $2$. The resulting algorithm is based on a {\it completely discrete} data structure, i.e., the {\it lattice path}. Brugall\'e and Mikhalkin then noticed that in specialized geometries, including all Hirzebruch surfaces, one can specialize the tropical constraints for the enumerative problem to distill the tropical curve to another {completely discrete}, rather than piecewise linear, data structure. A feature of our approach here is that we are, in a sense, able to combine the strengths of the floor diagram techniques and the lattice path algorithms. 

The floor diagram techniques have revealed a number of important properties about enumerative invariants and generalize well to $\mathbb P^1\times\mathbb P^1$ where tangency with the sections is transverse, as well as Hirzebruch surfaces~\cite{AB17,BGM,BG16,Blo21,B15,CJMR17,FM09}. For another interesting approach to enumerating tropical curves, see~\cite{MR19}.

\subsection{Outline} This paper is organized as follows. In \S\ref{sec-countcurves}, we introduce our main tropical counting problem, concerning curves in $\PP^1\times\PP^1$ satisfying boundary conditions and the analogous logarithmic Gromov--Witten invariants. In \S\ref{sec: log-int} we link these invariants to logarithmic intersection theory, for context. The correspondence theorem establishing the equality of these logarithmic Gromov--Witten invariants and their tropical counterparts is proved in \cite{MR16,R15b}. It opens the possibility to obtain structural results like piecewise polynomiality for logarithmic Gromov--Witten invariants by means of tropical geometry, which is our line of arguments.

In \S\ref{sec-path} we focus on lattice paths. We recall in \S\ref{subsec-latticepath} Mikhalkin's definition of lattice paths and their multiplicity and relation to tropical curve counts. In \S\ref{subsec-polymake}, we present our computational tool, based on \textsc{polymake} \cite{polymake}, that counts lattice paths and determines their multiplicity for any toric surface, any genus, and any boundary conditions. In \S\ref{subsec-structurepath}, we present structural properties of certain lattice path counts. In particular, we present our first main result, Theorem \ref{maintheorem1}, a polynomiality statement for a count where we require full contact order for one fiber and one section. This is similar to the polynomiality of one-part Hurwitz numbers mentioned above. The rest of the section is devoted to the proof of Theorem \ref{maintheorem1} and some examples. 
The proof has three steps: first, we characterize the paths that contribute to our count with a nonzero multiplicity. This is done in \S\ref{subsec-charpaths}.
The final step is to prove that for each such path, the multiplicity is  polynomial. This is done in \S\ref{subsec-multpp}. But before we can proceed to this final step, we have to develop a technical trick that allows us to compute multiplicities of paths more easily: we show that one can in fact pick any right turn in a lattice path and perform the recursion for the negative multiplicity, it does not necessarily have to be the first right turn as in Mikhalkin's original description \cite{Mi03}. This is done in \S\ref{subsec-generalizedmikpos}. We end our section on lattice paths by providing examples in \S\ref{subsec-pathex} on the polynomial structure.

The final section, \S\ref{sec-subfloor}, is devoted to subfloor diagrams. This is our new tool, a combination of a lattice path count with a floor decomposition technique, which enables us to compute small examples by hand and also admits structural results on piecewise polynomiality. Our main result in this section is Theorem \ref{maintheorem2} on piecewise polynomiality of counts in $\PP^1\times\PP^1$ with certain boundary behaviour. In \S\ref{subsec-countsubfloor}, we define subfloor diagrams and their counts with multiplicity. In \S\ref{subsec-tropcurvessubfloor} we prove that the count of subfloor diagrams is equal to the corresponding count of tropical curves, and, accordingly, to a logarithmic Gromov--Witten invariant. In \S\ref{subsec-ppsubfloor} finally, we prove our main result of this section, Theorem \ref{maintheorem2} on piecewise polynomiality of counts in $\PP^1\times\PP^1$ with certain boundary behaviour. We end the section on subfloor diagrams by providing examples in \S\ref{subsec-subfloorex} on the piecewise polynomial structure.

\subsection{Acknowledgements}
The authors thank Michael Joswig, Lars Kastner, and Ajith Urundolil Kumaran for useful discussions. We gratefully acknowledge support by the Deutsche Forschungsgemeinschaft (DFG, German Research Foundation), TRR 195. D.C. is supported by the SFB-TRR project   \textit{Symbolic Tools in Mathematics and their Application} project-ID 286237555. D.R. is supported by EPSRC New Investigator Award EP\slash V051830\slash 1.

\section{Counts of tropical and logarithmic curves in $\PP^1\times\PP^1$}\label{sec-countcurves}

\subsection{Logarithmic Gromov--Witten invariants of $\PP^1\times\PP^1$}\label{subsec-logGWI}
Logarithmic stable maps and logarithmic Gromov--Witten invariants were developed in ~\cite{AC11,Che10,GS13} and have been studied via tropical geometry in a number of different papers~\cite{CJMR17,Gro15,MR16,MR19,R15b}. We recall the setup in our context.

Let $\mu_1,\mu_2$ be partitions of $d_1$, and $\nu_1,\nu_2$ partitions of $d_2$. We record this data in a tuple $(\mu,\nu)=(\mu_1,\mu_2,\nu_1,\nu_2)$. There is a moduli stack with logarithmic structure
\begin{equation*}
\overline{M}^{\mathsf{log}}_{g,n}(\PP^1\times\PP^1,(\mu,\nu)),
\end{equation*}
such that a map from a logarithmic scheme $S$ to the moduli space is equivalent to a diagram 
\begin{equation*}
\begin{tikzcd}
\mathscr C \arrow{d}\arrow{r}{f} & \mathbb \PP^1\times \PP^1 \\
S, 
\end{tikzcd}
\end{equation*}
where $\mathscr C$ is a family of connected marked rational nodal  curves and $f$ is a map of logarithmic schemes, whose underlying map is stable in the usual sense, and where the contact orders with the toric boundary are given by $\mu$ and $\nu$. The $n$ markings refer to the ``interior markings'', i.e., the markings that have contact order equal to $0$. 

This moduli space is a virtually smooth Deligne-Mumford stack and it carries a virtual fundamental class denoted by $[\overline{M}^{\mathsf{log}}_{g,n}(\PP^1\times\PP^1,(\mu,\nu))]^\mathsf{vir}$ in degree 
\begin{equation*}
\mathsf{vdim} = g+\ell(\mu_1)+\ell(\mu_2)+\ell(\nu_1)+\ell(\nu_2)-1 +n.
\end{equation*}

Note that in the case when the genus $g$ is equal to $0$, the virtual class coincides with the usual fundamental class, as the moduli space of logarithmic stable maps is logarithmic smooth and irreducible of the expected dimension~\cite[\S3]{R15b}. 

For each of the  $n$ marked points, there are evaluation morphisms
\begin{equation*}
\mathsf{ev}_i: \overline{M}^{\mathsf{log}}_{g,n}(\PP^1\times\PP^1,(\mu,\nu))\to \PP^1\times\PP^1.
\end{equation*}


\begin{definition}\label{def-logGWI}
Let $\ell(\mu_1)+\ell(\mu_2)+\ell(\nu_1)+\ell(\nu_2)-1 =n$. The \emph{logarithmic Gromov--Witten} invariant is defined as the following intersection number on $\overline{M}^{\mathsf{log}}_{0,n}(\PP^1\times\PP^1, (\mu,\nu))$:
\begin{equation}\label{eq-lgw}
N_g(\mu,\nu):=
\int_{[\overline{M}^{\mathsf{log}}_{g,n}(\PP^1\times\PP^1,(\mu,\nu))]^\mathsf{vir}} \prod_{j=1}^n \mathsf{ev}_j^\ast([pt]).
\end{equation}
In the case that $g = 0$, we use the notation $N(\mu,\nu)$. 
\end{definition}

We note that in this paper, we will almost always set $g = 0$. 

\subsection{Logarithmic intersection perspective}\label{sec: log-int} We briefly explain the connection to logarithmic intersection theory. Let $(X,D)$ be a pair consisting of a smooth projective variety (or Deligne--Mumford stack) $X$ and a simple normal crossings divisor $D$. The {\it logarithmic intersection theory} of $(X,D)$ is the study of the intersection theory on all blowups of $X$ along boundary strata, considered simultaneously. For a detailed introduction to the subject, see~\cite{HS21,MPS21,MR21}. 

More precisely, a {\it simple blowup} of $(X,D)$ is a blowup of $X$ at a stratum of $D$. The simple blowup has a natural normal crossings divisor, given by the reduced preimage of $D$. A {\it simple blowup sequence} is a sequence of such simple blowups with respect to these divisors. If $X'\to X$ is a simple blowup sequence, there is a natural pullback map in Chow
\[
\mathsf{CH}^\star(X)\hookrightarrow\mathsf{CH}^\star(X').
\]
The {\it logarithmic Chow ring} of $(X,D)$ is then defined to be
\[
\mathsf{logCH}^\star(X,D) = \varinjlim_{X'\to X} \mathsf{CH}^\star(X'),
\]
with transition maps as described above. A basic source of classes in this ring are {\it piecewise polynomial functions}\footnote{The terminology of piecewise polynomial here should not be confused with the piecewise polynomial nature of the invariants: the two are not related!} on (all subdivisions) of the generalized cone complex associated to $(X,D)$, in the sense of~\cite{ACP,CCUW}. 

Fixing a genus $g$ and partition data $\mu$ and $\nu$, there is a distinguished element in the logarithmic Chow ring of $(\Mbar_{g,m},\partial \Mbar_{g,m})$, where $m$ is $n$ plus the total length of all the partitions: the (rank $2$) higher double ramification cycle associated to $g$ and $(\mu,\nu)$, see~\cite{HMPPS,HS21,MR21}. It generalizes the well-known double ramification cycle from~\cite{JPPZ} in two ways: first, it is a version for $\mathbb P^1\times\mathbb P^1$ rather than just $\mathbb P^1$, and second, it has been lifted to the logarithmic Chow theory. In the case when $g = 0$, the class is just the fundamental class of $\Mbar_{0,n}$. 

With this preliminary work, we can now explain the motivation for this work from logarithmic intersection theory. 

\noindent
{\bf Rank 1 story.} The rank 1, or ``usual'' double ramification cycle can be related to double Hurwitz numbers in two ways.  The double Hurwitz numbers are equal to the intersection of the logarithmic double ramification cycle with piecewise polynomial classes by~\cite{CMR22}. The polynomiality of the double ramification cycle in ordinary Chow of $\Mbar_{g,n}$ leads to the piecewise polynomiality of the double Hurwitz numbers~\cite{BSSZ,CM14}. 

\noindent
{\bf Rank 2 story.} The rank 2 higher double ramification cycle is known not to be polynomial. It is shown in~\cite{RUK22} that the numbers $N_g(\mu,\nu)$ are intersections of the logarithmic double ramification cycles against piecewise polynomial functions. 

The results in this paper therefore suggest that the polynomiality of the higher double ramification cycle may not be lost forever. We now focus on genus $0$ throughout the rest of the paper, since the story is already mysterious in this case.

\subsection{Tropical analogues of logarithmic Gromov--Witten invariants}\label{subsec-tropGWI}

To investigate the structure of the numbers $N(\mu,\nu)$, we use tropical geometry. We introduce basic notions of tropical Gromov--Witten theory and use the correspondence theorem which yields an equality of the numbers $N(\mu,\nu)$ introduced above with their tropical counterparts. For more details and other cases of tropical Gromov--Witten invariants and correspondence theorems, see, e.g.,  \cite{CJMR17, Gro10, MR16, MR19, MR08, Mi03, R15b}.

A rational \emph{(abstract) tropical curve} is a connected metric tree $\Gamma$ with unbounded rays called ends. Locally around a point $p$, $\Gamma$ is homeomorphic to a star with $r$ half-rays. 
The number $r$ is called the \emph{valence} of the point $p$ and denoted by $\val(p)$.
We require that there are only finitely many points with $\val(p)\neq 2$. 
A finite set of points containing  (but not necessarily equal to the set of) all points of valence larger than $2$ may be chosen; its elements  are called  \emph{vertices}. 
By abuse of notation, the underlying graph with this vertex set is also denoted by $\Gamma$. 
Correspondingly, we can speak about \emph{edges} and \emph{flags} of $\Gamma$. A flag is a tuple $(V,e)$ of a vertex $V$ and an edge $e$ with $V\in \partial e$. It can be thought of as an element in the tangent space of $\Gamma$ at $V$, i.e., as a germ of an edge leaving $V$, or as a half-edge (the half of $e$ that is attached to $V$). Edges which are not ends have a finite length and are called \emph{bounded edges}.

A rational \emph{marked tropical curve} is a tropical curve such that some of its ends are labeled. An isomorphism of a tropical curve is a homeomorphism respecting the metric and the markings of ends.  The \emph{combinatorial type} of a tropical curve is obtained by dropping the information on the metric.  A \emph{Newton fan} is a multiset $D=\{v_1,\ldots,v_k\}$ of vectors $v_i\in \ZZ^2$ satisfying $\sum_{i=1}^k v_i=0$. 

\begin{definition}
A \emph{tropical stable map to $\RR^2$} is a tuple $(\Gamma,f)$ where $\Gamma$ is a marked abstract tropical curve and $f:\Gamma\to \Sigma$ is a piecewise integer-affine map of polyhedral complexes satisfying:
\begin{itemize}
\item On each edge $e$ of $\Gamma$, $f$ is of the form 
\begin{equation*}
t\mapsto a+t\cdot v \mbox{ with } v\in \ZZ^2,
\end{equation*}
where we parametrize $e$ as an interval of size the length $l(e)$ of $e$. The vector $v$, called the \emph{direction}, arising in this equation is defined up to sign, depending on the starting vertex of the parametrization of the edge. We will sometimes speak of the direction of a flag $v(V,e)$. If $e$ is an end we use the notation $v(e)$ for the direction of its unique flag.
\item The \emph{balancing condition} holds at every vertex, i.e.,
\begin{equation*}
\sum_{e \in \partial V} v(V,e)=0.
\end{equation*}
\item The \emph{stability condition} holds, i.e., for every $2$-valent vertex $v$ of $\Gamma$, the star of $v$ is not contained in the relative interior of any single cone of $\Sigma$.
\end{itemize}
\end{definition}
For an edge with direction $v=(v_1,v_2) \in \ZZ^2$, we call $w=\gcd(v_1,v_2)$ the \emph{expansion factor} and $\frac{1}{w}\cdot v$ the \emph{primitive direction} of $e$.

An isomorphism of tropical stable maps is an isomorphism of the underlying tropical curves respecting the map. The \emph{degree} of a tropical stable map is the Newton fan given as the multiset of directions of its ends.

We say that a tropical stable map is \emph{to $\PP^1\times\PP^1$ of degree $(\mu,\nu)$} if its Newton fan equals
\begin{align*}
\Big\{ &\mu_{11}\cdot \binom{-1}{0},\ldots, \mu_{1\ell(\mu_1)}\cdot \binom{-1}{0} ,\mu_{21}\cdot \binom{1}{0},\ldots, \mu_{2\ell(\mu_2)}\cdot \binom{1}{0} , \\ &\nu_{11}\cdot \binom{0}{-1},\ldots, \nu_{1\ell(\nu_1)}\cdot \binom{0}{-1} ,\nu_{21}\cdot \binom{0}{1},\ldots, \nu_{2\ell(\nu_2)}\cdot \binom{0}{1} \Big\}.
\end{align*}

The \emph{combinatorial type} of a tropical stable map is the data obtained when dropping the metric of the underlying graph. More explicitly, it consists of the data of a finite graph $\Gamma$, and  
for each edge $e$ of $\Gamma$, the expansion factor and primitive direction of $e$.

The image of a tropical stable map is a tropical plane curve. Often, the image already determines the map. For that reason, by abuse of notation, we also call tropical stable maps tropical curves.

\begin{definition}\label{def-tropGWI}

Fix $n$ point conditions $p_1,\ldots,p_n\in \RR^2$ in tropical general position. We say that a marked rational tropical stable map $(\Gamma,f)$ \emph{meets the point conditions} if the marked end $j$ is contracted to $p_j\in \RR^2$ for $j=1, \ldots, n$.
The \emph{tropical Gromov--Witten invariant} 
\begin{equation*}
N^{\trop}(\mu,\nu)
\end{equation*} is the weighted number of marked rational tropical stable maps $(\Gamma,f)$ to $\PP^1\times\PP^1$ of degree $(\mu,\nu)$  meeting $n$ point conditions, where $n=\ell(\mu_1)+\ell(\mu_2)+\ell(\nu_1)+\ell(\nu_2)-1$.
Each such tropical stable map is counted with \emph{multiplicity} $\frac{1}{\mathrm{Aut}(f)}m_{(\Gamma,f)}$, where 
$m_{(\Gamma,f)}=\prod_V \mult(V)$ is the product of all vertex multiplicities and
\begin{equation*}
\mult(V)= \begin{cases}1 & \mbox{ if precisely one marked end $j$ is adjacent to $V$ }\\ |\det(v_{e_1},v_{e_2})| & \mbox{if no marked end is adjacent to $V$.} \end{cases}
\end{equation*}
Here, $v_{e_1}$ and $v_{e_2}$ are the direction vectors of two edges $e_1$, $e_2$ adjacent to $V$. 
\end{definition}
It follows from the general position hypothesis that each vertex is $3$-valent. Because of the balancing condition it plays then no role which $2$ adjacent edges we use above for the multiplicity.
It can be shown that a tropical Gromov--Witten invariant does not depend on the location of the conditions $p_i$ \cite{GM07c}.

The image $f(\Gamma)$ of a rational tropical stable map with Newton fan $D$ and $\#D=n+1$ through $n$ points in tropical general position is fixed, i.e., there is no other rational tropical stable map of the same combinatorial type passing through the points. That is, the stable map cannot vary in a higher-dimensional family within its combinatorial type.

\begin{example}
Figure \ref{fig-exstablemap} shows a tropical stable map counting for $N^{\trop}((2),(1,1),(1,1,1),(1,1,1))$ with multiplicity $2$. As usual, the picture encodes both the abstract graph $\Gamma$ and the image $f(\Gamma)$: we leave a small gap for a crossing of images of edges which are separate in $\Gamma$, we write expansion factors next to edges, we draw points for the contracted marked ends.
\end{example}

The following theorem follows from  \cite{R15b}.

\begin{theorem}[Correspondence Theorem]\label{thm-corres}
The logarithmic Gromov--Witten invariants of $\PP^1\times\PP^1$ from Definition \ref{def-logGWI} are equal to their tropical counterparts from Definition \ref{def-tropGWI}, i.e., we have
\begin{equation*}
N(\mu,\nu)=N^{\trop}(\mu,\nu).
\end{equation*}
\end{theorem}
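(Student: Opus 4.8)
The plan is to obtain the equality as an application of the general logarithmic degeneration framework of \cite{R15b} to the specific target $(\PP^1\times\PP^1, D)$. The crucial simplification is the one already recorded in the excerpt: in genus $0$ the space $\overline{M}^{\mathsf{log}}_{0,n}(\PP^1\times\PP^1,(\mu,\nu))$ is logarithmically smooth and irreducible of the expected dimension, so its virtual class is the ordinary fundamental class. Consequently $N(\mu,\nu)$ is a genuine, automorphism-weighted count of isolated maps rather than a virtual intersection number. A dimension count confirms finiteness: with $n=\ell(\mu_1)+\ell(\mu_2)+\ell(\nu_1)+\ell(\nu_2)-1$ the expected dimension is $2n$, and the $n$ point conditions each impose codimension $2$ via $\mathsf{ev}_j^\ast([pt])$, so the relevant locus is $0$-dimensional.

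First I would set up the tropicalization morphism. The pair $(\PP^1\times\PP^1,D)$ has cone complex the fan in $\RR^2$, and the moduli space carries a tropicalization to a cone complex parametrizing combinatorial types of tropical stable maps. Concretely, a logarithmic stable map over a valued field produces, via its dual complex, a tropical stable map to $\RR^2$ of degree $(\mu,\nu)$ in the sense of the definitions above. This realizes the tropical enumeration as the combinatorial shadow of the algebraic one and lets the geometric count be analyzed type by type.

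Next I would impose the point conditions through a compatible toric degeneration. Choosing $p_1,\ldots,p_n$ in tropical general position is the same as choosing a polyhedral subdivision of $\RR^2$, which induces a degeneration of $\PP^1\times\PP^1$ whose special fiber is a union of toric surfaces glued along torus-invariant curves. The decomposition formula in logarithmic Gromov--Witten theory then expresses $N(\mu,\nu)$ as a sum over those combinatorial types of tropical stable maps that are rigid with respect to the point conditions. General position guarantees this set is finite, that every contributing tropical curve is rigid as asserted after Definition~\ref{def-tropGWI}, and that each vertex not meeting a marked end is trivalent. Each such rigid $(\Gamma,f)$ indexes a stratum of maps into the special fiber.

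The heart of the matter, and the step I expect to be the main obstacle, is computing the contribution of each rigid tropical curve and matching it to $\tfrac{1}{|\Aut(f)|}\prod_V \mult(V)$. At a trivalent vertex $V$ landing in a single toric surface component, the local maps are torically parametrized and a lattice-index computation should produce exactly $|\det(v_{e_1},v_{e_2})|$ local lifts, while the bounded edges contribute gluing (smoothing-of-nodes) conditions governed by their expansion factors. Assembling these local counts along $\Gamma$, and dividing by the automorphisms of the map, is meant to recover the product $\prod_V \mult(V)$ weighted by $1/|\Aut(f)|$; summing over all rigid tropical curves yields $N^{\trop}(\mu,\nu)$. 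The delicate points are verifying transversality of the evaluation conditions so that each curve is counted with the correct positive multiplicity, and the careful bookkeeping of expansion factors and automorphisms in the gluing, both of which are exactly what the machinery of \cite{R15b} is designed to control.
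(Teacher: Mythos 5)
Your proposal is correct in outline, but you should be aware that the paper does not actually prove Theorem \ref{thm-corres}: the result is imported wholesale, with the single remark that it ``follows from \cite{R15b}'' (the introduction credits \cite{MR16,R15b}). So your sketch is not competing with an argument in the paper but with the cited literature, and there it is accurate on the two points the paper does record: in genus $0$ the moduli space of logarithmic stable maps to $\PP^1\times\PP^1$ is logarithmically smooth and irreducible of the expected dimension $2n$, so the virtual class is the ordinary fundamental class, and the $n$ point conditions reduce the invariant to a finite, automorphism-weighted count. Where your route genuinely differs from the cited proof: you argue via a toric degeneration induced by a polyhedral subdivision adapted to the point configuration, the decomposition formula, and a vertex-by-vertex lattice-index computation matching $\prod_V \mult(V)$ and the factor $1/|\Aut(f)|$ --- this is the Nishinou--Siebert style argument \cite{NS06}, essentially the one carried out for logarithmic invariants (with descendants) in \cite{MR16} --- whereas \cite{R15b} instead identifies the tropical moduli space with the skeleton of the analytified moduli space of logarithmic stable maps and deduces the correspondence from compatibility of tropicalization with the evaluation maps. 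Both routes are valid for rational curves in toric surfaces; the degeneration route makes the vertex multiplicities and automorphism factors emerge from explicit counts of lifts and gluings, while the skeleton route gives the stronger statement that the correspondence holds at the level of moduli spaces rather than just numbers. Since you, like the paper, ultimately defer the transversality and gluing bookkeeping to the machinery of \cite{R15b}, your proposal is at bottom the same appeal to the literature, fleshed out with a correct sketch of one of the two standard proofs.
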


\section{Lattice paths and their computation}\label{sec-path}

In this section, we provide a slight generalization of Mikhalkin's lattice path count of tropical curves satisfying point conditions, such that ends of higher weight corresponding to points of higher contact order with the toric boundary are included. We also provide a computational tool building on \textsc{Polymake} \cite{polymake}) that computes those numbers. We use our tool for first experiments concerning piecewise polynomial structure. 

\subsection{Mikhalkin's lattice path algorithm}\label{subsec-latticepath}
Let $\Delta$ be a lattice polygon in $\RR^{2}$.

\begin{definition}
  A path $\gamma: [0,n] \rightarrow \RR^2$ is called a \textit{lattice path} if $\gamma |_{[j-1,j]}, j=1,\ldots,n$ is an affine-linear map and $\gamma(j) \in \ZZ^2 $ for all $ j=0\ldots,n$.
\end{definition}

Let $ \lambda$ be a fixed linear map of the form $ \lambda: \RR^2 \to \RR, \lambda (x,y) = x-\varepsilon y $, where $ \varepsilon $ is a small irrational number. A lattice path is called $\lambda$-increasing if $\lambda \circ \gamma$ is strictly increasing. Let $\Delta$ be a lattice polygon. Let $p$ and $q$ be the points in $\Delta$ where $\lambda|_{\Delta}$ reaches its minimum and maximum, respectively. 

Let $\beta = (\beta(1),\beta(2),\beta(3),\ldots,)$ be a finite sequence of nonnegative integers. Define
\begin{equation*}
I\beta = \sum_{k\geq 1} k \cdot \beta(k), \hspace{20pt} \text{and} \hspace{20pt} |\beta| = \sum_{k\geq 1} \beta(k). 
\end{equation*}
For each edge $e$ of $\Delta$, let $\beta_{e} = (\beta_{e}(1), \beta_{e}(2),\beta_e(3),\ldots)$ be a sequence of nonnegative integers such that $I\beta_e = |e\cap \ZZ^2|-1$, and let $\vec{\beta} = \{\beta_e \, : \, e\in E(\Delta)\}$. Denote by $\vec{\beta}_{+}$, resp. $\vec{\beta}_{-}$,  the set of $\beta_{e}$ where $e$ is an edge traversed by the path from $p$ to $q$ going clockwise, resp. counterclockwise, around $\partial \Delta$. Define
\begin{equation*}
I\vec{\beta} = \sum_{e \in E(\Delta)} I\beta_{e}, \hspace{20pt} \text{and} \hspace{20pt} |\vec{\beta}| = \sum_{e \in E(\Delta)} \beta_e. 
\end{equation*}
 Denote by $n_{+}$, resp. $n_{-}$, the lattice length of the unique path from $p$ to $q$ along $\partial \Delta$ going clockwise, resp. counterclockwise.   A $\vec{\beta}_{\pm}$-\textit{initial path} is a path $\delta_{\pm}:[0,n_{\pm}] \to \partial \Delta$ such that
\begin{itemize}
	\item $\delta_{\pm}$ is a $\lambda$-increasing path from $p$ to $q$ along $\partial \Delta$ (clockwise for $\vec{\beta}_{+}$, and counterclockwise for $\vec{\beta}_{-}$);
	\item $\delta$ has exactly $\beta_{e}(k)$ steps along the edge $e$ for each edge $e$ encountered by $\delta_{\pm}$. 
\end{itemize}

\begin{definition} \label{def-multpath}
	Let $\gamma:[0,n]\rightarrow \Delta$ be a $\lambda$-increasing path from $p$ to $q$, that is, $\gamma(0)=p$ and $\gamma(n)=q$. The \emph{multiplicities} $\mult_+(\gamma)$ and $\mult_-(\gamma)$ are defined recursively as follows:
	\begin{enumerate}
		\item \label{def-mu-a}
		$\mult_{+}(\delta_{+}):=1$, if $\delta_{+}$ is a $\vec{\beta}_{+}$--initial, and  $\mult_{-}(\delta_{-}):=1$ if $\delta_{-}$ is a $\vec{\beta}_{-}$--initial path.
		\item \label{def-mu-b}
		If $\gamma \neq \delta_{\pm}$ let $k_{\pm} \in [0,n]$ be the smallest number such that $\gamma$ makes a left turn (respectively a right turn for $\mult_-$) at $\gamma(k_{\pm})$. (If no such $k_{\pm}$ exists we set $\mult_{\pm}(\gamma):=0$). Define two other $\lambda$-increasing lattice paths as follows:
		\begin {itemize}
		\item $\gamma_{\pm}':[0,n-1]\rightarrow \Delta$ is the path that cuts the corner of $\gamma(k_{\pm})$, i.e., $\gamma'_{\pm}(j):=\gamma(j)$ for $j<k_{\pm}$ and $\gamma'_{\pm}(j) := \gamma(j+1)$ for $j \geq k_{\pm}$.
		\item $\gamma''_{\pm}:[0,n]\rightarrow \Delta$ is the path that completes the corner of $\gamma(k_{\pm})$ to a parallelogram, i.e., $\gamma''_{\pm}(j):=\gamma(j)$ for all $j \neq k_{\pm}$ and $\gamma''_{\pm}(k_{\pm}) :=\gamma(k_{\pm}-1)+\gamma(k_{\pm}+1)-\gamma(k_{\pm})$ (see Figure \ref{fig-paths}).
		
		\begin{figure}
			\begin{center}
				\includegraphics{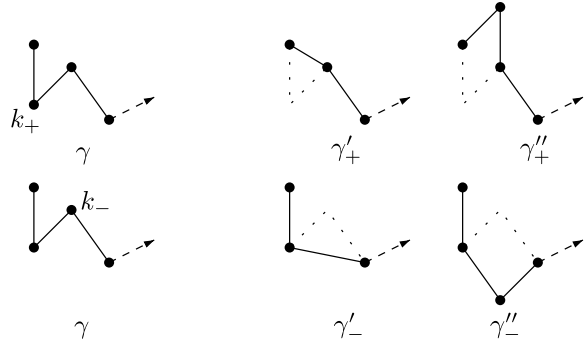}
				\caption{Cutting or completing a corner for the recursion in the lattice path algorithm.}\label{fig-paths}
			\end{center}
		\end{figure}
		
		\end {itemize}
		Let $T$ be the triangle with vertices $\gamma(k_{\pm}-1),\gamma(k_{\pm}), \gamma(k_{\pm}+1)$. Then we set
		\begin{equation*}
		\mult_{\pm}(\gamma):=2 \cdot \Area{T} \cdot \mult_{\pm}(\gamma'_{\pm}) + \mult_{\pm}(\gamma''_{\pm}).
		\end{equation*}
		As both paths $\gamma'_{\pm}$ and $\gamma''_{\pm}$ include a smaller area
		with $\delta_{\pm}$, we can assume that their multiplicity is known. If
		$\gamma''_{\pm}$ does not map to $\Delta$, $\mult_{\pm}(\gamma''_{\pm})$ is
		defined to be zero.
	\end{enumerate}
	Finally, the multiplicity $\mult(\gamma)$ is defined to be the product 	$\mult(\gamma) := \mult_+(\gamma) \mult_-(\gamma)$. Strictly speaking, $\mult_{\pm}(\gamma)$ depends on $\vec{\beta}$. If it is necessary to emphasize the dependence on $\vec{\beta}$, we write $\mult_{\pm}^{\vec{\beta}}(\gamma)$. 
\end{definition}

\begin{rem1}
	As we discuss in \S \ref{subsec-generalizedmikpos}, to compute $\mult_{-}(\gamma)$, we may choose \textit{any} left turn at step \ref{def-mu-b}, not just the first one. Similarly, to compute  $\mult_{+}(\gamma)$, we may choose \textit{any} right turn. 
\end{rem1}

Note that the multiplicity of a path $\gamma$ is positive only if the recursion above ends with the path $\delta_+:[0,n_+]\rightarrow \Delta$ (respectively $\delta_-$). In other words, if we end up with a ``faster'' or ``slower'' path $\delta':[0,n']\rightarrow \Delta$ such that $\delta_+([0,n_+])=\delta'([0,n'])$ but $n'\neq n_+$, or with a path that has different step sizes, then the multiplicity is zero.

\begin{definition}[Mikhalkin's position]
Let $p_1,\ldots,p_n$ be points in $\RR^2$ in tropical general position. We say that they are in \emph{Mikhalkin's position} if they lie on a line with small irrational negative slope through $(0,0)$ and their relative distances grow. The line is denoted by $L_\lambda$.
\end{definition}

Let $\Delta$ be a lattice polygon. Fix $\lambda$, and two $\lambda$-increasing lattice paths $\delta_{+}:[0,n_+]\rightarrow \partial \Delta$ (going clockwise around $\partial \Delta$) and $ \delta_-: [0,n_-] \rightarrow \partial \Delta$ (going counterclockwise around $\partial \Delta$) on the boundary of $\Delta$.  Let $D$ be the Newton fan dual to $\Delta$, where the expansion factors are given by the lattice lengths of the steps of $\delta_+$ and $\delta_-$.

\begin{theorem}[\cite{Mi03}] \label{thm-latticepaths}
The number of $\lambda$-increasing lattice paths in $\Delta$, counted with multiplicity as defined above, equals the number of tropical stable maps with Newton fan $D$ passing through $n$ points in Mikhalkin's position, counted with multiplicity.
\end{theorem}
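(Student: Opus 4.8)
The plan is to prove the statement via the duality between tropical plane curves and regular subdivisions of the Newton polygon $\Delta$. A tropical stable map of degree $D$ is the same datum as a weighted subdivision $S$ of $\Delta$: the vertices of $f(\Gamma)$ correspond to two-dimensional cells of $S$, bounded edges to interior edges of $S$ (rotated by $90^\circ$), and the unbounded ends to boundary segments of $\Delta$ with the prescribed lattice lengths encoded by the expansion factors. Under this dictionary the vertex multiplicity $|\det(v_{e_1},v_{e_2})|$ of a trivalent vertex equals twice the lattice area of the dual triangle, so that $m_{(\Gamma,f)}=\prod_{\text{cells } \sigma} 2\Area(\sigma)$. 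The first task is therefore to translate the enumeration of curves through the points into an enumeration of such subdivisions.

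First I would analyze how Mikhalkin's position constrains the curve. Since the points $p_1,\dots,p_n$ lie on $L_\lambda$ with rapidly growing gaps, and since the curve is rigid in its combinatorial type, each $p_i$ lies in the relative interior of a distinct edge $E_i$ of $f(\Gamma)$, and the growth condition forces the portion of the curve joining consecutive points to be $\lambda$-monotone. Dualizing, the sequence of dual edges $E_1^\ast,\dots,E_n^\ast$ concatenates into a single $\lambda$-increasing lattice path $\gamma\colon[0,n]\to\Delta$ from $p$ to $q$: consecutive dual edges share the lattice point dual to the cell of $S$ that the curve traverses between $p_i$ and $p_{i+1}$. This produces a well-defined map from tropical stable maps through the points to $\lambda$-increasing lattice paths, and one checks that the steps of $\gamma$ have the lattice lengths recorded by $\vec{\beta}$.

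The central step is to show that, for a fixed path $\gamma$, the weighted number of curves mapping to it equals $\mult_+(\gamma)\,\mult_-(\gamma)$. The path $\gamma$ cuts $\Delta$ into two regions, the clockwise part $\Delta_+$ and counterclockwise part $\Delta_-$, bounded by $\gamma$ and the boundary arcs $\delta_\pm$. A subdivision $S$ inducing $\gamma$ is precisely a choice of subdivision of $\Delta_+$ and of $\Delta_-$ refining $\gamma$ along the common boundary, and because the line $L_\lambda$ separates the two sides these choices are independent; hence the total weight factors as a product over the two sides. It remains to identify the weighted count of subdivisions of $\Delta_\pm$ with $\mult_\pm(\gamma)$, which I would do by induction on the lattice area enclosed between $\gamma$ and $\delta_\pm$. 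At the first left (resp.\ right) turn $\gamma(k)$, the triangle $T$ with vertices $\gamma(k-1),\gamma(k),\gamma(k+1)$ is either a cell of $S$ — contributing its weight $2\Area(T)$ and leaving a subdivision of the smaller region bounded by the cut path $\gamma'_\pm$ — or it is not, in which case the dual edge runs straight across and the corner is absorbed, leaving a subdivision refining the completed path $\gamma''_\pm$. These two cases are disjoint and exhaustive, yielding exactly $\mult_\pm(\gamma)=2\Area(T)\,\mult_\pm(\gamma'_\pm)+\mult_\pm(\gamma''_\pm)$, with base case the trivial subdivision of weight $1$ when $\gamma=\delta_\pm$.

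The main obstacle I anticipate is this first translation step: rigorously showing that Mikhalkin's position forces every contributing curve into the ``staircase'' shape so that the dual path is well defined and $\lambda$-increasing, and that distinct curves through the points are not conflated. This requires a careful tropical intersection argument controlling where the curve meets $L_\lambda$, exploiting the growth of the gaps, together with the verification that the area weights match vertex multiplicities and that the peeling recursion enumerates each dual subdivision exactly once (no overcounting at the turns). Granting these, summing the per-path identity over all $\lambda$-increasing lattice paths gives the claimed equality of weighted counts.
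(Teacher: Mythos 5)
The paper does not reprove this theorem --- it is quoted from \cite{Mi03} --- but the essential argument is sketched in the paper's proof of Lemma \ref{lem:multiplicityLowerTropicalCurves}, and comparing your proposal with that argument exposes a genuine gap in your central step. You claim that a dual subdivision inducing $\gamma$ is \emph{precisely} a pair of subdivisions of $\Delta_+$ and $\Delta_-$ refining $\gamma$, and that at the designated turn the two cases (``the triangle $T$ is a cell'' versus ``the completed parallelogram is a cell'') are disjoint and exhaustive, so that the peeling recursion enumerates all such subdivisions. That dichotomy is false for subdivisions in general: the cells of $S$ meeting the corner $\gamma(k)$ need not contain both steps of the turn. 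For example, an edge of $S$ emanating from $\gamma(k)$ into the corner region, ending at a lattice point $D$ of the region, produces a fan of two triangles at $\gamma(k)$; this is a legitimate subdivision refining $\gamma$ (one can even arrange it to use only triangles and parallelograms and no interior vertices, by taking $D$ on the boundary arc), yet it is reached by neither branch of the recursion. So the identity ``weighted count of subdivisions of $\Delta_\pm$ refining $\gamma$ equals $\mult_\pm(\gamma)$'' fails as stated. What is true --- and what must be proven --- is that such fan configurations cannot occur in the dual subdivision of a curve through points in Mikhalkin's position; the position hypothesis therefore enters the per-path identity itself, not only in the ``translation step'' that you flagged as the main obstacle. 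Relatedly, you never argue the converse direction of duality: that each subdivision produced by the recursion is realized by \emph{exactly one} curve through the given points, which is needed to equate weighted subdivision counts with weighted curve counts.

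The argument of \cite[\S 7]{Mi03}, which the paper reproduces in sketch form for its generalization (Lemma \ref{lem:multiplicityLowerTropicalCurves}), handles both issues at once and is dynamic rather than static: one grows the edge germs emanating from the $p_i$ into the half-plane below $L_\lambda$; because the gaps between consecutive points are of wildly different orders, the first collision must occur between the germs through $p_k$ and $p_{k+1}$, where $k$ is the relevant right turn, and this collision is either a $3$-valent vertex (dual triangle, contributing $2\Area T$) or a transverse crossing (dual parallelogram, contributing $1$); it takes place in a thin strip below $L_\lambda$, after which one replaces $L_\lambda$ by a parallel line below the strip and recurses. This collision argument is exactly what rules out the fan configurations and simultaneously establishes existence and uniqueness of the curve attached to each branch of the recursion; your static peeling of subdivisions cannot be made ``disjoint and exhaustive'' without it. Finally, a small but real slip: $m_{(\Gamma,f)}$ is the product of $2\Area(\sigma)$ over the \emph{triangles} of the dual subdivision only; parallelograms are dual to crossings of $f(\Gamma)$, which are not vertices of $\Gamma$, and contribute a factor $1$ --- consistent with the coefficient $1$ of $\mult_{\pm}(\gamma''_{\pm})$ in the recursion, but not with your formula taken over all cells.
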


\begin{example}\label{ex-latticepath}
Let $\Delta= \conv\{(0,0),(1,2),(2,3),(3,2),(4,0)\}$ as depicted in Figure \ref{fig-pathex}. For the path $\delta_+$ we take very lattice point on the upper boundary from $(0,0)$ to $(4,0)$, for the path $\delta_-$, we take only one step. Thus the Newton fan $D$ equals $\{\binom{-2}{1}, \binom{-1}{1}, \binom{1}{1}, \binom{2}{1},\binom{0}{-4}\}$.
Figure \ref{fig-pathex} shows the lattice path recursion for the path $\delta_+$ itself. We have $\mult_+(\delta_+)=1$. To compute the negative multiplicity, we follow the algorithm in Definition \ref{def-multpath}. The first right corner can be completed or cut. If it is completed, the next right corner has to be cut, as for the completion, the path would not be contained in $\Delta$ anymore. The next right turn can be completed or cut again, in each case after that it is possible only to cut the remaining corners. 

If the first corner is cut, all the next corners have to be cut, too. 

In the end, we obtain three possible Newton subdivisions of multiplicity $3\cdot 3\cdot 4=36$, $3\cdot 1\cdot 8=24$ and $1\cdot 5\cdot 8=40$. Altogether, $\mult(\delta_+)=1\cdot \mult_-(\delta_+)=1\cdot(36+24+40)=100$.
The Newton subdivisions which are constructed with this recursion for the lattice path are shown in Figure \ref{fig-pathex}, below, we depict the three dual tropical stable maps with Newton fan $D$ passing through $4$  points in Mikhalkin's position. 
\end{example}

\begin{figure}
\begin{center}
\includegraphics{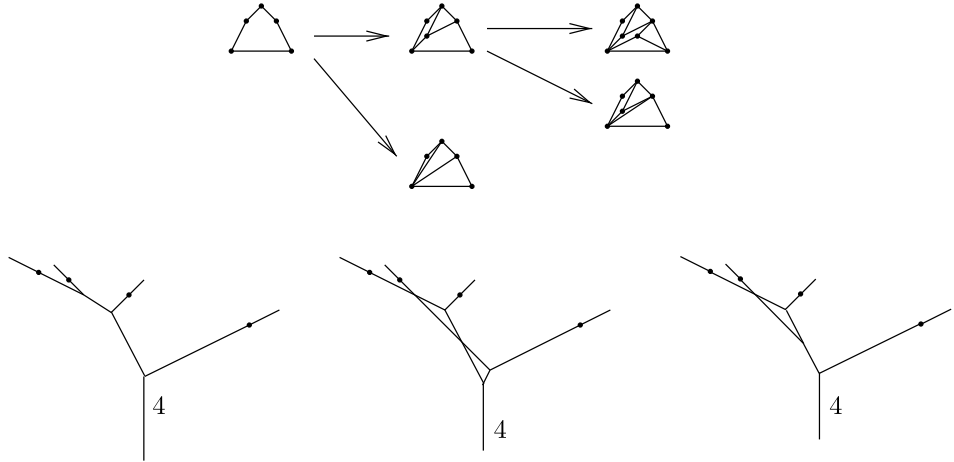}
\caption{Paths and dual tropical curves.}\label{fig-pathex}
\end{center}
\end{figure}

\subsection{The \textsc{Polymake} tool to compute lattice paths}\label{subsec-polymake}

We overview the \textsc{Polymake} \cite{polymake} program that computes multiplicity of a $\lambda$-increasing lattice path, which can be found at the following github repository. 
\begin{center}
	\url{https://github.com/dcorey2814/latticePathAlgorithm}
\end{center}
\noindent There are 3 main functions.

\noindent \textbf{Function}: \texttt{multiplicity}

\textit{Input}: an integer $g$, a lattice polytope $\Delta$, the multiplicities $\vec{\beta}$, and a $\lambda$-increasing lattice path $\gamma$. 

\textit{Output}: the multiplicity of the lattice path $\gamma$ in $\Delta$. 

\medskip 

\noindent \textbf{Function}: \texttt{count\_lattice\_paths\_with\_multiplicity}

\textit{Input}: an integer $g$, a lattice polytope $\Delta$, and the multiplicities $\vec{\beta}$. 

\textit{Output}: the number of length $(|\vec{\beta}|+g-1)$ $\lambda$--increasing lattice paths in $\Delta$, counted with multiplicity. 

\medskip

\noindent \textbf{Function}: \texttt{nonzero\_paths}

\textit{Input}: an integer $g$, a lattice polytope $\Delta$, and the multiplicities $\vec{\beta}$.

\textit{Output}: all length $(|\vec{\beta}| + g -1)$ $\lambda$--increasing lattice paths $\gamma$ in $\Delta$ with nonzero multiplicity, together with their multiplicities. 

\medskip 

We illustrate the functionality with some examples. Let $\Delta$ be the rectangle with vertices $(0,0)$, $(0,5)$, $(3,0)$, and $(3,5)$, and consider the multiplicities:
\begin{equation*}
\mu_{{\tiny \mbox{up}}} = (1,1,1), \hspace{10pt} \mu_{{\tiny \mbox{down}}} = (3), \hspace{10pt} \nu_{{\tiny \mbox{left}}} = (3,2), \hspace{10pt} \nu_{{\tiny \mbox{right}}} = (5).
\end{equation*}
We translate from partitions to sequences $\vec{\beta}$ that have an entry at the $i$-th place for every part $i$ appearing in the partition:
\begin{equation*}
\beta_{{\tiny \mbox{up}}} = (3), \hspace{10pt} \beta_{{\tiny \mbox{down}}} = (0,0,1), \hspace{10pt} \beta_{{\tiny \mbox{left}}} = (0,1,1), \hspace{10pt} \beta_{{\tiny \mbox{right}}} = (0,0,0,0,1).
\end{equation*}

\noindent First, we record $\Delta$, its vertices, and its edges.
\begin{verbatim}
  $Delta = new Polytope(POINTS=>[[1,0,0], [1,3,0], [1,0,5], [1,3,5]]);
  $vertices = $Delta->VERTICES->minor(All,~[0]);
  $edge_indices = $Delta->VERTICES_IN_FACETS;
  @edges = map {new Pair<Vector, Vector>(
                $vertices->row($_->[0]), $vertices->row($_->[1]))} 
           @$edge_indices;
\end{verbatim}
The multiplicity is recorded as a map, where the keys are the edges, and the values are the multiplicity along the edge. 
\begin{verbatim}
  $betas = new Map<Pair<Vector,Vector>,Vector>;
  $betas -> {$edges[0]} = new Vector([0,1,1]);
  $betas -> {$edges[1]} = new Vector([0,0,1]);
  $betas -> {$edges[2]} = new Vector([0,0,0,0,1]);
  $betas -> {$edges[3]} = new Vector([3]);
\end{verbatim}
A length $\ell$ lattice path is recorded as a $\ell \times 2$ matrix. For example, the lattice path 
\begin{equation*}
  \gamma = (0,5) \to (0,3) \to (0,0) \to (1,2) \to (2,5) \to (3,5) \to (3,0).
\end{equation*}
is recorded as
\begin{verbatim}
  $gamma = new Matrix([[0,5], [0,3], [0,0], [1,2], [2,5], [3,5], [3,0]]);
\end{verbatim}
We compute the multiplicity of $\gamma$ by running 
\begin{verbatim}
  multiplicity(0, $Delta, $betas, $gamma);
\end{verbatim}
which returns $1440$. This agrees with the calculation in Example \ref{ex:lowerMultiplicity}.  Similarly, running 
\begin{verbatim}
  count_lattice_paths_with_multiplicity(0, $Delta, $betas);
\end{verbatim} 
returns 19170, the number of length $6$ $\lambda$-increasing paths in $\Delta$ counted with $\vec{\beta}$--multiplicity.  Finally, running 
\begin{verbatim}
  nonzero_paths(0, $Delta, $betas);
\end{verbatim} 
returns a map whose keys are the length $6$ $\lambda$-increasing paths in $\Delta$ with nonzero multiplicity, and the values are their multiplicities. In this case, there are 16 such paths. 

\subsection{Structural results obtained via lattice paths} \label{subsec-structurepath}

Let $\Delta$ be a rectangle. Given two tuples of partitions of the lattice width resp.\ height $(\mu_{1}, \mu_{2})$ and $(\nu_{1}, \nu_{2})$, as before we let $N(\mu_{1}, \mu_{2},\nu_{1}, \nu_{2})$ be the number of tropical genus 0 curves subject to the boundary data, see Figure \ref{fig:rectanglecnu}.  Given $\lambda$-increasing paths $\gamma,\gamma'$, we write $\gamma \prec_{L} \gamma'$ if $\gamma'$ appears in the recursion to compute $\mult_{-}(\gamma)$. Similarly, we write $\gamma \prec_{R} \gamma'$ if $\gamma'$ appears in the recursion to compute $\mult_{+}(\gamma)$. Given two lattice points $p_1, p_2$, the step $p_1 \to p_2$ is recorded as the vector $s = \brak{a}{b} = p_2 - p_1$.  Denote by $\sD(\gamma)$ to be the multiset of down steps. 

\begin{figure}
	\includegraphics[width=\textwidth]{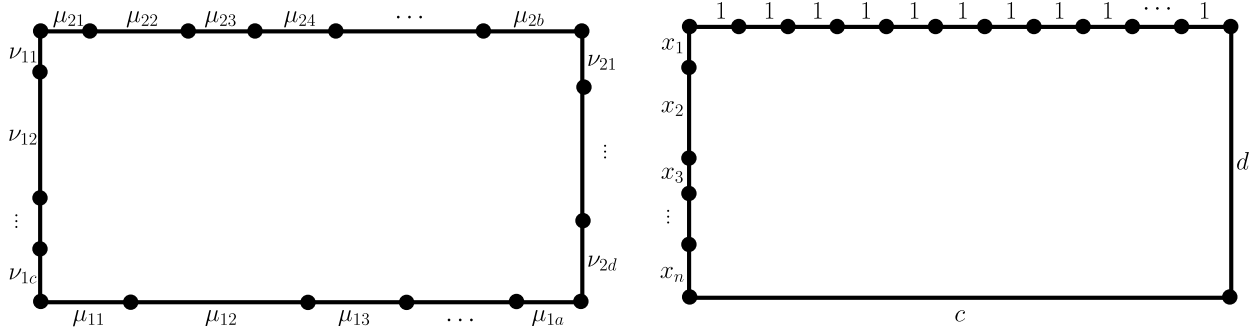}
	\caption{Rectangle with boundary conditions.}
	\label{fig:rectanglecnu}
\end{figure}

 We gather some general properties of $\lambda$-increasing lattice paths in $\Delta$.  

\begin{proposition}
	\label{prop:downsteps}
	If $\gamma \prec_{L} \gamma'$ or $\gamma \prec_{R} \gamma'$, then $\sD(\gamma') \subseteq \sD(\gamma)$. 
\end{proposition}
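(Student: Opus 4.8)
The plan is to reduce the statement to a single step of the recursion in Definition \ref{def-multpath} and to track the first coordinate of each step. First I would make explicit the convention, implicit in the use of Mikhalkin's position, that the irrational number $\varepsilon$ defining $\lambda(x,y)=x-\varepsilon y$ is chosen smaller than the reciprocal of the lattice height $h$ of $\Delta$. With this choice, any step $s=\brak{a}{b}$ of a $\lambda$-increasing lattice path in $\Delta$ satisfies $a\ge 0$: indeed $\lambda(s)=a-\varepsilon b>0$ gives $a>\varepsilon b\ge -\varepsilon h>-1$ since $|b|\le h$, and $a\in\ZZ$ forces $a\ge 0$; moreover $a=0$ forces $-\varepsilon b>0$, i.e. $b<0$. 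Hence the down steps are exactly the \emph{vertical} steps (those with $a=0$), and $\sD(\gamma)$ is the submultiset of vertical steps of $\gamma$.

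Next I would observe that it suffices to prove the inclusion when $\gamma'$ is one of the two immediate children $\gamma'_{\pm},\gamma''_{\pm}$ of $\gamma$ produced by a single cut or completion at the selected corner $\gamma(k)$ (a right turn in the recursion for $\mult_-$, a left turn in the recursion for $\mult_+$). Indeed, $\gamma\prec_L\gamma'$ or $\gamma\prec_R\gamma'$ means precisely that $\gamma'$ is obtained from $\gamma$ by a finite sequence of such elementary moves, and $\subseteq$ is transitive, so the general statement follows from the one-step statement by composing inclusions. The argument below uses only that $\gamma(k)$ is a genuine turn, so it applies verbatim to both recursions.

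For the completion $\gamma''_{\pm}$ the claim is immediate: replacing $\gamma(k)$ by $\gamma(k-1)+\gamma(k+1)-\gamma(k)$ exchanges the two steps $s_k=\gamma(k)-\gamma(k-1)$ and $s_{k+1}=\gamma(k+1)-\gamma(k)$ and leaves every other step unchanged, so the entire multiset of steps, and in particular $\sD$, is preserved. For the cut $\gamma'_{\pm}$ the two steps $s_k,s_{k+1}$ are replaced by their sum $s_k+s_{k+1}$, all other steps being unchanged. Here I would invoke the turn hypothesis: since $\gamma(k)$ is a turn, $\det(s_k,s_{k+1})\neq 0$, so $s_k$ and $s_{k+1}$ are not parallel and in particular not both vertical. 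Consequently at most one of $s_k,s_{k+1}$ lies in $\sD(\gamma)$, and since both have nonnegative first coordinate, the first coordinate of $s_k+s_{k+1}$ is strictly positive, whence $s_k+s_{k+1}\notin\sD(\gamma'_{\pm})$. Therefore $\sD(\gamma'_{\pm})=\sD(\gamma)\setminus\big(\{s_k,s_{k+1}\}\cap\sD(\gamma)\big)$ is a submultiset of $\sD(\gamma)$, which gives the one-step inclusion and hence the proposition.

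The only genuinely delicate point is the first step, namely pinning down that $\sD(\gamma)$ consists exactly of the steps with vanishing first coordinate; once this $\varepsilon$-convention is in place, everything reduces to the elementary observation that a corner with $\det=0$ is not a turn, so a cut can never merge two vertical steps into a new vertical one and can only delete down steps. I do not expect any further obstacle.
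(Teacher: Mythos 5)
Your proof is correct and takes essentially the same route as the paper's: reduce to a single move of the recursion, observe that completing a parallelogram merely permutes the multiset of steps, and show that cutting a corner can only delete down steps. The only difference is cosmetic: where the paper notes that a down step can only occur as the second step of a right turn and is removed by the cut, you make the smallness of $\varepsilon$ explicit and argue via first coordinates (a down step is exactly a vertical step, and the merged step at a genuine turn has strictly positive first coordinate), which treats the $\prec_{L}$ and $\prec_{R}$ cases uniformly.
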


\begin{proof}
	The two cases are similar, so we only handle the  $\gamma \prec_{L} \gamma'$ case. 	For a fixed $\gamma$, it suffices to consider the two paths $\gamma'$ and $\gamma''$ obtained by cutting the corner and pushing out the parallelogram at the first right turn, respectively. Pushing-out a parallelogram preserves the multiset of down steps, i.e., $\sD(\gamma'') = \sD(\gamma)$. If the first right turn does not have a down step, then $\sD(\gamma'') = \sD(\gamma)$ (note that cutting the corner cannot introduce a new downward step since $\gamma$ is $\lambda$-increasing). 	Otherwise, the down step must be the second step in the turn, cutting the corner removes this down step, and so $\sD(\gamma') \subset \sD(\gamma)$. 
\end{proof}

\begin{proposition}
	\label{prop:noSEstepPositiveMultiplicity}
	Suppose $\mu_{2} = (1,\ldots,1)$ and $\mult(\gamma) \neq 0$.
	\begin{enumerate}
		\item Each step can go at most one unit to the right. 
		\item The path $\gamma$ cannot have a down-right step. 
	\end{enumerate}
\end{proposition}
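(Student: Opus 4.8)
The plan is to read both statements off a single structural feature of the multiplicity recursion: \emph{corner-completion merely permutes the multiset of step vectors of a path, while corner-cutting replaces two consecutive step vectors by their sum.} First I would fix conventions. On the rectangle $\Delta$ the functional $\lambda(x,y)=x-\varepsilon y$ is minimised at the top-left corner $p$ and maximised at the bottom-right corner $q$, and a step $s=\brak{a}{b}$ of a $\lambda$-increasing path satisfies $a-\varepsilon b>0$; for small $\varepsilon$ this forces either $a\geq 1$ (a rightward step) or $a=0$ with $b<0$ (a straight-down step). Since $\mult(\gamma)=\mult_+(\gamma)\mult_-(\gamma)\neq 0$, in particular $\mult_+(\gamma)\neq 0$, so by the remark following Definition~\ref{def-multpath} the recursion for $\mult_+$ terminates at the clockwise extremal path $\delta_+$, which runs rightward along the top edge of $\Delta$ and then straight down the right edge. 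The hypothesis $\mu_2=(1,\ldots,1)$ records that the top-edge steps of $\delta_+$ are all equal to $\brak{1}{0}$, while its right-edge steps are straight-down. Hence every step of $\delta_+$ has horizontal component in $\{0,1\}$, and $\brak{1}{0}$ is its only step with horizontal component $1$.

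The key step is to make precise how $\gamma$ relates to $\delta_+$. Along any branch of the recursion from $\gamma$ down to $\delta_+$, each move is either a completion, which swaps two consecutive steps and so preserves the multiset of step vectors, or a cut, which deletes two consecutive steps $s_1,s_2$ and inserts $s_1+s_2$. Tracking the ancestry of step vectors through such a branch, I would show that the steps of $\gamma$ partition into blocks, one block per step of $\delta_+$, with the vector sum over each block equal to the corresponding step of $\delta_+$. This is the rightward-step analogue of the down-step monotonicity in Proposition~\ref{prop:downsteps}, except that here I exploit the full vector sums rather than just the down-steps, so that every step of $\gamma$ lies in exactly one block.

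With this in hand both statements are immediate. For (1), fix any step $s=\brak{a}{b}$ of $\gamma$; it lies in a block whose vectors all have non-negative horizontal component and which sums to a step of $\delta_+$, so $a$ is at most the horizontal component of that step, which is $\leq 1$. For (2), suppose $s=\brak{a}{b}$ is a down-right step, so $a\geq 1$ and $b<0$; by (1) we have $a=1$. Its block then has horizontal sum $\geq 1$, hence must sum to $\brak{1}{0}$, the unique step of $\delta_+$ with positive horizontal component. But the remaining members of the block have horizontal component $0$ and are therefore straight-down steps with strictly negative second coordinate, while $s$ itself contributes $b<0$; the vertical components of the block thus sum to a strictly negative number, contradicting that the block sums to $\brak{1}{0}$. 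So $\gamma$ has no down-right step.

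The main thing to get right — and the only real obstacle — is the bookkeeping in the key step: that one terminating branch suffices (it does, since $\mult_+(\gamma)\neq 0$ produces at least one), and that the block decomposition genuinely partitions \emph{all} of $\gamma$'s steps rather than controlling only a distinguished subset. Once completion is seen as a permutation and cutting as a merge, this is a direct induction on the length of the branch, and the orientation conventions (which horizontal edge carries $\mu_2$, and correspondingly whether one works with $\delta_+$ or $\delta_-$) affect only labels, not the argument.
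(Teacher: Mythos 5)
Your argument is correct, and it takes a genuinely different route from the paper's. The paper disposes of part (1) by citing \cite[Lemma~3.6]{GM07b}, and proves part (2) by a separate induction along the $\mult_{-}$ recursion: at the first right turn, completing the parallelogram merely transports the down-right step, while cutting the corner creates a step going two units to the right, which is then excluded by part (1). You instead read both parts off one structural lemma about a single successful branch of the $\mult_{+}$ recursion---completion transposes two adjacent step vectors, cutting merges two adjacent steps into their sum---so the steps of $\gamma$ are partitioned into blocks whose vector sums are the steps of a $\vec{\beta}_{+}$-initial path; since $\mu_{2}=(1,\ldots,1)$ makes $\brak{1}{0}$ the only step of $\delta_{+}$ with positive horizontal component, (1) and (2) follow at once. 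This is a vector-valued strengthening of the paper's own Proposition~\ref{prop:downsteps}, which tracks only the down steps; your version is self-contained, treats the two parts uniformly, and is reusable. It also gets the orientation right, which the paper's write-up arguably does not: with $\mu_{2}$ on the top edge (the convention used in \S\ref{subsec-polymake} and in the column-wise resolution of \S\ref{subsec-charpaths}), the intermediate claim the paper's induction aims to establish---that a down-right step forces $\mult_{-}(\gamma)=0$---is false. In the $2\times 1$ rectangle with $\mu_{1}=(2)$, $\mu_{2}=(1,1)$, $\nu_{1}=\nu_{2}=(1)$, the path $(0,1)\to(0,0)\to(1,1)\to(2,0)$ has a down-right step, yet cutting its unique right turn produces exactly $\delta_{-}$, so $\mult_{-}(\gamma)=2$; it is $\mult_{+}(\gamma)$ that vanishes, exactly as your block argument predicts. (The paper's induction is fine under the mirror convention with $\mu_{2}$ on the bottom edge; your closing remark that the labelling only decides whether one runs the argument toward $\delta_{+}$ or $\delta_{-}$ is precisely the right safeguard.) One cosmetic point: the termination fact you invoke---that $\mult_{+}(\gamma)\neq 0$ forces some branch of the recursion to end at a $\vec{\beta}_{+}$-initial path---comes from the unnumbered note after Definition~\ref{def-multpath} together with nonnegativity of all terms in the recursion, not from the Remark about choosing arbitrary turns.
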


\begin{proof}
	Part (1) is essentially  \cite[Lemma~3.6]{GM07b}, so we focus on (2). 
	We prove that if $\gamma$ has a down-right step, then $\mult_{-}(\gamma) = 0$ by induction on $A_{+}(\gamma)$, the area of the region inside $\Delta$ above $\gamma$. If $A_{-}(\gamma) = 0$, then $\gamma$ traces the lower and left edges of $\Delta$, and hence has no down-right step. Now consider the general case, and assume that the first down-right step is $z_{1} \to z_{2}$. If there is no right turn, then $\gamma$ is not $\beta$--initial, hence $m_{-}(\gamma) = 0$. So we may assume that a right turn exists. Furthermore, we may assume that $z_{1} \to z_{2}$ is part of this right turn, since otherwise this down-right step persists to the next step of the upper multiplicity algorithm, so $\mult_{-}(\gamma) = 0$ by the inductive hypothesis. Completing the parallelogram at the first right turn translates the down-right step, so it suffices to prove that $\mult_{-}(\gamma') = 0$ where $\gamma'$ is the path obtained by cutting the corner.  If the first right turn is $w \to z_{1} \to z_{2}$, then $w\to z_1$ cannot be vertical. If we cut the corner, $\gamma'$ goes right by two units. Given the $\mu_{2} = (1,\ldots,1)$ hypothesis, statement (1) tells us that $\mult_{-}(\gamma') = 0$. If we complete the corner to a parallelogram, the down-right step remains and so by induction, $m_{-}(\gamma'') = 0$. Accordingly, also $\mult_{-}(\gamma) = 0$.
  Suppose that the first right turn is $z_1 \to z_2 \to w$. Then $z_2 \to w$ must have an rightward component, and so the step $z_1\to w$ in $\gamma'$ goes two units right. Given the $\mu_{2} = (1,\ldots,1)$ hypothesis, statement (1) tells us that $\mult_{-}(\gamma') = 0$.  
\end{proof}

\begin{proposition}
	\label{prop:canthappen}
	Suppose $\mu_{1} = \mu$ and $\nu_{2} = \nu$ are both partitions in just one part. Suppose $\gamma$ is a lattice path such that $\mult(\gamma) \neq 0$, and $\gamma'$ is a path in the recursion of computing $\mult_{+}(\gamma)$ or $\mult_{-}(\gamma)$ (possibly $\gamma = \gamma'$) with nonzero multiplicity. Then $\gamma'$ cannot have a lattice point in the relative interior of the right edge or the bottom edge. 
\end{proposition}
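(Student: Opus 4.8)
The plan is to exploit the orientation coming from $\lambda(x,y)=x-\varepsilon y$, under which $p$ is the top‑left corner and $q$ the bottom‑right corner of the rectangle $\Delta=[0,a]\times[0,h]$. Every $\lambda$‑increasing step has non‑negative horizontal component, so the $x$‑coordinate is non‑decreasing along any admissible path; in particular, once a path reaches the line $x=a$ it can only move straight down. Because $\mu_1$ and $\nu_2$ are single parts, the two initial paths are as simple as possible on the relevant edges: $\delta_+$ runs along the top and then descends the right edge in a single step $(a,h)\to(a,0)$, while $\delta_-$ descends the left edge and then crosses the bottom edge in a single step $(0,0)\to(a,0)$. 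Thus neither $\delta_+$ nor $\delta_-$ has a vertex in the relative interior of the right or bottom edge, which gives the base of every induction. I also record two elementary orientation facts, each obtained from the sign of a cross product: the turn by which a path first reaches the right edge (a rightward step followed by a downward step) is always a \emph{right} turn, and the turn by which a path first reaches the bottom edge (a downward step followed by a rightward step) is always a \emph{left} turn.

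First I would prove the two ``one‑sided'' statements: if $\mult_+(\sigma)\neq 0$ then $\sigma$ has no interior vertex on the right edge, and symmetrically if $\mult_-(\sigma)\neq 0$ then $\sigma$ has no interior vertex on the bottom edge. I argue the first by induction on the area enclosed between $\sigma$ and $\delta_+$, the base case $\sigma=\delta_+$ being handled above. For the inductive step, note that the portion of $\sigma$ lying on the right edge is entered by a right turn and is otherwise a straight descent, so it contains no left turn; hence the first left turn of $\sigma$ — the one used in the recursion for $\mult_+$ — occurs strictly before $\sigma$ reaches $x=a$. Consequently both children $\sigma'_+$ (cut) and $\sigma''_+$ (completed parallelogram) leave the right‑edge portion of $\sigma$ geometrically unchanged. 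If $\sigma$ had an interior right‑edge vertex, so would $\sigma'_+$ and $\sigma''_+$, which have strictly smaller area; the inductive hypothesis would force $\mult_+(\sigma'_+)=\mult_+(\sigma''_+)=0$ and hence $\mult_+(\sigma)=2\,\Area(T)\mult_+(\sigma'_+)+\mult_+(\sigma''_+)=0$, a contradiction. The bottom‑edge statement is proved identically with left/right and $+/-$ interchanged. Applying both to $\gamma$ (which has $\mult_+(\gamma),\mult_-(\gamma)\neq 0$) settles the case $\gamma'=\gamma$: $\gamma$ itself has no interior vertex on either edge.

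It remains to propagate these properties into the recursions, and here the two ``crossed'' cases are the real point: paths appearing in the recursion for $\mult_-(\gamma)$ must also avoid the right edge, and those in the recursion for $\mult_+(\gamma)$ must also avoid the bottom edge. I would prove this by showing the relevant single operations preserve the property, starting from $\gamma$. Cutting a corner only deletes a vertex, so it can never create an interior edge vertex. For completing a parallelogram at a right turn $\sigma(k)$, the new vertex is $p^{\ast}=\sigma(k-1)+\sigma(k+1)-\sigma(k)$; since $x$ is non‑decreasing one has $p^{\ast}_x=x_{k-1}+x_{k+1}-x_k\le x_{k+1}\le a$, with equality only if the incoming step is vertical, which a short cross‑product computation shows is incompatible with the turn being a right turn. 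Hence $p^{\ast}$ cannot lie on the right edge, and completing preserves ``no interior right‑edge vertex''. The symmetric computation at a left turn shows $p^{\ast}_y=y_{k-1}+y_{k+1}-y_k$ cannot vanish (again by a cross‑product sign, using monotonicity of $x$), so completing at a left turn cannot place a vertex on the bottom edge; vertices that would fall outside $\Delta$ contribute zero and are discarded. Inducting down each recursion tree from $\gamma$ then gives the result, and combining with the one‑sided statements of the previous paragraph covers both edges for every $\gamma'$.

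The main obstacle is exactly these crossed cases: a path can perfectly well have nonzero $\mult_-$ and an interior vertex on the right edge in isolation (small $2\times 2$ examples exhibit this), so the statement is genuinely about paths arising \emph{within the recursion of a fixed $\gamma$ of nonzero total multiplicity}. The content is therefore the preservation lemmas, and their heart is the coordinate computation showing that a completed parallelogram at a turn of the ``wrong'' handedness can never land in the relative interior of the opposite single‑part edge. An alternative route to controlling the right‑edge descent is Proposition~\ref{prop:downsteps}, tracking that $\delta_+$ carries the single down step $(0,-h)$, but the turn‑orientation argument seems cleaner and treats both edges uniformly.
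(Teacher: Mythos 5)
Your proof is correct and takes essentially the same route as the paper's: your one-sided statements (nonzero $\mult_+$ forbids interior right-edge vertices, nonzero $\mult_-$ forbids interior bottom-edge vertices, via persistence of the bad point under the matching recursion) are the paper's first paragraph, and your forward-preservation argument for the crossed cases is exactly the contrapositive of the paper's claim that a bad point on the opposite edge propagates backward through the recursion to $\gamma$ itself, yielding the same contradiction. The only real difference is that you explicitly verify, by the cross-product computation, that completing a parallelogram at a turn of the wrong handedness can never create a vertex on the opposite single-part edge --- a step the paper asserts without justification --- so your write-up fills in a detail rather than changing the argument.
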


\begin{proof}
	Suppose $\gamma \prec_{R} \gamma'$. If $\gamma'$ has a point in the relative interior of the right edge, then all subsequent paths in this recursion also have this point, so their positive multiplicities are all 0. Therefore, $\gamma'$ cannot have a point on the right edge. By symmetry, if $\gamma \prec_{L} \gamma''$ then $\gamma''$ cannot have a point in the relative interior of the bottom edge. 	
	
	Now suppose that $\gamma'$ has a point in the relative interior of the bottom edge. Then every path in the recursion before $\gamma'$ also have this point, in particular so does $\gamma$. By an argument analogous to the previous paragraph, this also means that every path in the recursion to compute $\mult_{-}(\gamma)$ also has this point, and so $\mult_{-}(\gamma)=0$, a contradiction. Therefore, $\gamma'$ does not have a point in the relative interior of the bottom edge. By a symmetric argument, if $\gamma \prec_{L} \gamma''$, then $\gamma''$ cannot have a point in the relative interior of the right edge. 
\end{proof}

For the rest of this section, we consider the following specific boundary conditions. Fix $c$, $d$, and $n$. Define the function

\begin{equation*}
N_{\rect}(x_1,\ldots,x_n) = N((c), (1^{c}), (x_1,\ldots,x_n), (d)). 
\end{equation*}

\begin{theorem}\label{maintheorem1}
	The function $N_{\rect}(x_{1},\ldots,x_{n})$ is (generically) polynomial. 
	
	More precisely, there is a polynomial in the $x_i$ which equals $N_{\rect}(x_{1},\ldots,x_{n})$ for all choices of $x_i$ which do not satisfy the equalities from Equation (\ref{eq:walls}).
\end{theorem}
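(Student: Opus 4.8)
The plan is to pass to the lattice path model via Theorem~\ref{thm-latticepaths}, so that $N_{\rect}(x_1,\ldots,x_n)$ becomes the count, with multiplicity, of $\lambda$-increasing lattice paths in the rectangle $\Delta$ whose boundary data is prescribed by $\mu_1=(c)$, $\mu_2=(1^c)$, $\nu_1=(x_1,\ldots,x_n)$ and $\nu_2=(d)$. Because $\mu_1,\mu_2$ are both partitions of $c$ and $\nu_1,\nu_2$ both partitions of $d=\sum_i x_i$, the polygon $\Delta$ itself is fixed (width $d$, height $c$); varying the $x_i$ only moves the subdivision points $s_k=x_1+\cdots+x_k$ along the bottom edge, which is the edge traversed by $\delta_-$. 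I would record at the outset that, for a fixed path $\gamma$, the factor $\mult_+(\gamma)$ depends only on $\vec{\beta}_{+}$, i.e.\ on the (fixed) top and right edges carrying $\nu_2=(d)$ and $\mu_2=(1^c)$, and is therefore independent of the $x_i$; all of the $x_i$-dependence is concentrated in $\mult_-(\gamma)$.

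The first step, carried out in \S\ref{subsec-charpaths}, is to characterize the paths with $\mult(\gamma)\neq 0$. Here the structural results of the previous subsection are the essential input: the transverse condition $\mu_2=(1^c)$ forces, via Proposition~\ref{prop:noSEstepPositiveMultiplicity}, that every step of a contributing path moves at most one unit to the right and that no step is a down-right step, while the full conditions $\mu_1=(c)$ and $\nu_2=(d)$ force, via Proposition~\ref{prop:canthappen}, that no path occurring in the relevant recursions meets the relative interior of the right or bottom edges. I expect these constraints to rigidify the contributing paths into a narrow, explicitly describable family of staircases from $(0,c)$ to $(d,0)$, encoded by discrete combinatorial data together with the position of the $s_k$ relative to the integer rows of $\Delta$. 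This same analysis should produce the walls of Equation~(\ref{eq:walls}): they are precisely the loci at which this discrete data jumps.

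The second step, in \S\ref{subsec-multpp}, is to show that each contributing path has multiplicity polynomial in the $x_i$. By the observation above it suffices to treat $\mult_-(\gamma)$, and the key enabling device is the flexibility proved in \S\ref{subsec-generalizedmikpos}: one may run the recursion of Definition~\ref{def-multpath} at \emph{any} right turn, not only the first. Choosing a convenient order of corner-cuts and parallelogram-completions, I would unfold $\gamma$ down to $\delta_-$ one bottom segment at a time, so that $\mult_-(\gamma)$ becomes a sum of products of the areas $2\cdot\Area(T)$ appearing in the recursion. Each such area is a polynomial function of the partial sums $s_k$, hence of the $x_i$, and so $\mult_-(\gamma)$ is a polynomial as well.

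Putting the steps together, off the walls the index set of contributing paths is constant and each $\mult(\gamma)=\mult_+(\gamma)\,\mult_-(\gamma)$ is a fixed polynomial, so $N_{\rect}$ agrees there with a finite sum of polynomials; the characterization is moreover meant to show that this expression is a single polynomial valid across all chambers, matching the genuine (rather than merely piecewise) polynomiality of one-part Hurwitz numbers. I expect the main obstacle to lie in the first two steps taken together: proving that the ``any right turn'' recursion actually terminates at $\delta_-$ exactly for the paths in the claimed family, and that the resulting areas assemble into one and the same polynomial on adjacent chambers, will require careful control of how the moving subdivision points $s_k$ interact with the fixed lattice points visited by $\gamma$.
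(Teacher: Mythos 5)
Your proposal follows the paper's architecture closely --- the same three steps, the same structural propositions, the same use of the ``any right turn'' freedom from \S\ref{subsec-generalizedmikpos} --- but it contains one genuine error that leaves a hole in the argument. You claim at the outset that, since $\vec{\beta}_{+}$ is fixed, the factor $\mult_{+}(\gamma)$ is independent of the $x_i$, so that all of the $x_i$-dependence sits in $\mult_{-}(\gamma)$, and your Step 2 accordingly treats only $\mult_{-}$. This inference is false: $\mult_{\pm}(\gamma)$ depends not only on the terminal data $\vec{\beta}_{\pm}$ but on the geometry of $\gamma$ itself, since the recursion of Definition \ref{def-multpath} cuts triangles whose areas are read off from the positions of the path's vertices, and the contributing paths \emph{move} with the $x_i$ (their steps have sizes $x_i$ and partial sums $x_I$). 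In the paper's normalization, the positive multiplicity of every contributing path is computed, via Lemma \ref{lem:1-step} and columnwise resolution, to be $\mult_{+}(\gamma)=\prod_{i=1}^{n} x_i$ --- a nonconstant degree-$n$ polynomial. The same happens in your transposed picture: take $c=1$, $n=2$, so your rectangle is $[0,d]\times[0,1]$ with $(x_1,x_2)$ on the bottom edge. The two contributing paths are $(0,1)\to(0,0)\to(x_i,0)\to(d,1)\to(d,0)$, and running the recursion directly gives $\mult_{+}(\gamma)=d\,x_i$ and $\mult_{-}(\gamma)=x_j$ (with $j\neq i$), so \emph{both} factors vary with the $x_i$, and the product $d\,x_1x_2$ is what sums to the invariant $2d\,x_1x_2$. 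Thus the polynomiality of $\mult_{+}$ is a substantive part of the theorem that your plan never proves; in the paper it is exactly what Lemma \ref{lem:1-step} (the trapezoid lemma) together with the column-by-column resolution forced by $\mu_2=(1^c)$ is for.

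A secondary point: for $\mult_{-}$ you propose to unfold $\gamma$ down to $\delta_-$ so that the multiplicity ``becomes a sum of products of areas,'' and you rightly flag that one must then show this sum defines the same polynomial in every chamber --- Example \ref{ex:lowerMultiplicity} shows the naive recursion branches differently on the two sides of a wall (two lower subdivisions versus three). The paper does not control such a branching sum; it eliminates branching altogether. Using Proposition \ref{prop:rightturns}, one first chooses only right turns ending in a vertical down step and completes parallelograms there (cutting would destroy a down step that, by Proposition \ref{prop:downsteps}, must survive to the left edge), and once all down steps sit on the left edge one always takes the last right turn, where completing the parallelogram is impossible, and cuts. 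At every stage exactly one move is allowed, so $\mult_{-}(\gamma)$ is a single product of areas that are linear in the $x_i$, manifestly one polynomial across chambers. Without this (or an equivalent) device, your ``main obstacle'' is not a technicality but the actual content of the final step; as written, both it and the $\mult_{+}$ computation remain open.
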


Notice that we even obtain polynomiality for this count, not only piecewise polynomiality. This is similar to the case of one-part Hurwitz numbers, i.e., double Hurwitz numbers for which one branch point is fully ramified, which are also polynomial and not only piecewise polynomial.

The rest of this section is devoted to the proof of Theorem \ref{maintheorem1} and some examples. 
The proof has three steps: first, we characterize the paths that contribute to $N_{\rect}(x_{1},\ldots,x_{n})$ with a nonzero multiplicity. This is done is \S\ref{subsec-charpaths}. In particular, the number of such paths is finite and does not depend on the concrete values for our variable $x_1,\ldots,x_n$, as long as they satisfy some genericity inequalities.
The third step is to prove that for each such path, the multiplicity is piecewise polynomial in the $x_i$, which then completes the proof of the theorem. This is done in \S\ref{subsec-multpp}. But before we can proceed to this final step, we have to develop a technical trick that allows us to compute multiplicities of paths more easily: we show that one can in fact pick any right turn in a lattice path and perform the recursion for the negative multiplicity, it does not necessarily have to be the first right turn as in Mikhalkin's original description \cite{Mi03}. This is done in \S\ref{subsec-generalizedmikpos}.

\subsection{Characterizing the paths}\label{subsec-charpaths}

We proceed by counting lattice paths and their multiplicities. Each such lattice path has $n+c+1$ steps. Given a subset $I = \{i_1,\ldots, i_k\}$ of $[n]$, denote by $x_{I} = x_{i_1} + \cdots + x_{i_{k}}$. Assume that
\begin{equation}
\label{eq:walls}
    x_{I} \neq x_J
\end{equation}
for all subsets $I \neq J$ of $[n]$; in particular, the $x_i$'s are distinct. 

In view of Proposition \ref{prop:noSEstepPositiveMultiplicity}, since each step goes at most 1 to the right, the upper multiplicity is resolved ``column-wise'' in the sense of \cite[\S3]{GM07b}. That means, if we follow the paths in the recursion and view the triangles and parallelograms as part of a subdivision with which we fill the rectangle, then this subdivision contains all lines of the form $\{x=i\}$ for $i=0,\ldots,c$ if we assume the rectangle has coordinates $(0,0)$, $(c,0)$, $(0,d)$, $(c,d)$.
 So, let us consider the ways in which a single column is resolved, i.e., filled with triangles and parallelograms. Let $T$ be the trapezoid with vertices $(0,0)$, $(0,a)$, $(1,a)$, $(1,b)$, and choose integers $\eta_1,\ldots,\eta_k$ such that $\eta_{I} \neq \eta_{J}$ for any pair of disjoint subsets $I,J\subset [k]$. Let $\gamma$ be the lattice path going from $(0,a)$ to $(1,b)$ around the lower end of $T$ with step sizes $\eta_1,\ldots,\eta_k$ along the left edge, see Figure \ref{fig:1-step}. As is standard practice, denote by $\chi_{E}$ the indicator vector of $E\subset [n]$. Define $\vec{\beta}$ by
 \begin{equation*}
 \vec{\beta} = \{\beta_{\text{left}}, \beta_{\text{right}}, \beta_{\text{up}}, \beta_{\text{down}}\}
 \end{equation*}
where $\beta_{\text{up}} = \beta_{\text{down}} = (1)$, $\beta_{\text{left}} = \chi_{\{\eta_1,\ldots,\eta_{n}\}}$, and $\beta_{\text{right}}$ is to be determined. 
\begin{figure}
	\includegraphics[height=4cm]{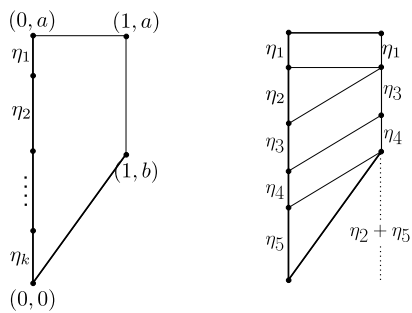}
	\caption{The path at the start of resolving a subdivision columnwise.}
	\label{fig:1-step}
\end{figure}

\begin{lemma}
	\label{lem:1-step}
	Given $\vec{\beta}$ as above, we have $\mult_{+}^{\vec{\beta}}(\gamma) \neq 0$ if and only if $\beta_{\text{right}} = \chi_{E}$ where $E \subset \{\eta_1,\ldots,\eta_k\}$. In this case, $\mult_{+}^{\vec{\beta}}(\gamma) = \prod_{\eta_{i} \notin E} \eta_{i}$.	
\end{lemma}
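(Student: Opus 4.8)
The plan is to run the positive–multiplicity recursion of Definition~\ref{def-multpath} directly on the single path $\gamma$ and to track how the steps $\eta_1,\ldots,\eta_k$ are consumed one at a time. The key structural observation is that, at every stage of the recursion, $\gamma$ has a \emph{unique} turn at which the recursion for $\mult_+$ applies: the steps along the left edge are all collinear (direction $\binom{0}{-1}$) and so produce no turn, and after any parallelogram move the new right-edge steps meet the diagonal in a right turn; only the junction between the vertical descent and the diagonal bottom step is the relevant (left) turn, and the cross-product computation shows this holds for every slope of the diagonal. Consequently there is never a choice about \emph{where} to apply the recursion --- it is always the bottom-most remaining left-edge vertex --- so the $\eta_i$ are forcibly processed from the bottom one to the top one, and at each step the only freedom is the binary choice in Definition~\ref{def-multpath}(\ref{def-mu-b}): cut the corner, or complete it to a parallelogram.

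First I would record the effect of each move. Writing the current diagonal as $(0,\sigma)\to(1,\tau)$, cutting at $\eta_j$ raises the left endpoint to $(0,\sigma+\eta_j)$ and contributes the triangle factor $2\,\Area = \eta_j$ (a triangle with vertical base $\eta_j$ and horizontal width $1$, so the factor is independent of $\sigma,\tau$); completing the parallelogram instead turns $\eta_j$ into a downward step of size $\eta_j$ on the right edge $\{x=1\}$, contributes the factor $1$, and advances $\tau$ to $\tau+\eta_j$. Thus cutting supplies the $2\,\Area\cdot\mult_+(\gamma')$ summand and completing supplies the $\mult_+(\gamma'')$ summand. Since only the parallelogram move ever creates a right-edge step, and each such step has size exactly $\eta_j$, the multiset of right-edge steps of any terminal path equals the set of pushed-out $\eta_j$. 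Because the genericity hypothesis forces the $\eta_i$ to be distinct (apply $\eta_I\neq\eta_J$ to singletons), this multiset is a genuine subset $E\subseteq\{\eta_1,\ldots,\eta_k\}$; a nonzero multiplicity requires at least one branch terminating at $\delta_+$, whose right-edge data is $\beta_{\mathrm{right}}$, and this gives the ``only if'' direction $\beta_{\mathrm{right}}=\chi_E$.

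For the converse I would fix $\beta_{\mathrm{right}}=\chi_E$ and expand the recursion into a sum over the subset $P\subseteq\{\eta_1,\ldots,\eta_k\}$ of pushed-out steps, each branch contributing $\prod_{\eta_j\notin P}\eta_j$ weighted by whether it terminates at $\delta_+$. Tracking the diagonal endpoints shows that after all $k$ steps are processed the diagonal is $(0,a)\to(1,b+\eta_P)$, which coincides with the top edge of $\delta_+$ precisely when $b+\eta_P=a$, i.e.\ $\eta_P=a-b=\eta_E$; distinctness of subset sums (again from $\eta_I\neq\eta_J$) forces $P=E$. For $P=E$ the branch is legal because the running height $\tau=b+(\text{partial sums of }E)$ never exceeds $b+\eta_E=a$, so every completed parallelogram lands inside $T$ and the branch does terminate at $\delta_+$; every other branch either pushes a point above $y=a$ (so $\mult_+(\gamma'')=0$ by Definition~\ref{def-multpath}) or ends at a path that is not $\delta_+$ and has no available turn, hence contributes $0$. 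The single surviving branch then evaluates to $\prod_{\eta_j\in E}1\cdot\prod_{\eta_j\notin E}\eta_j=\prod_{\eta_i\notin E}\eta_i$, as claimed.

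I expect the main obstacle to be the bookkeeping in the converse: one must verify cleanly that, among all $2^{k}$ cut/complete branches, \emph{exactly} the branch $P=E$ survives. This is precisely where genericity is indispensable --- without distinctness of the subset sums several subsets $P$ could realize the same right-edge data, and the clean product formula would degenerate into a sum. A secondary point needing care is confirming that the recursion genuinely proceeds column by column, so that the local trapezoid of Figure~\ref{fig:1-step} is the correct model; this is guaranteed by Proposition~\ref{prop:noSEstepPositiveMultiplicity}(1), which ensures each step advances at most one unit to the right.
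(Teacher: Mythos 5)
Your proposal is correct and follows essentially the same route as the paper's proof: both arguments observe that the recursion for $\mult_{+}$ is forced to process the left-edge steps one at a time with only the binary cut/complete choice, that cutting contributes the factor $\eta_j$ while completing transports $\eta_j$ to the right edge, and that the distinct-subset-sum hypothesis singles out the unique surviving branch $P=E$, giving $\prod_{\eta_i\notin E}\eta_i$. The only cosmetic differences are that you verify the uniqueness of the relevant turn and the right-edge step sizes by direct computation where the paper cites Proposition~\ref{prop:downsteps}, and your closing appeal to Proposition~\ref{prop:noSEstepPositiveMultiplicity}(1) is unnecessary here since the trapezoid is a single column and your turn analysis already settles the point.
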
 

\begin{proof}	
	Suppose that $\mult_{+}^{\vec{\beta}}(\gamma) \neq 0$.  If $\gamma \prec_{R} \gamma'$ and $\gamma'$ traces the top and right sides of $T$, the steps along the right edge have distinct sizes of the form $\eta_i$ by Proposition \ref{prop:downsteps}. Since the $\eta_J$'s are distinct, there is a unique $J\subset [k]$ such that $\eta_J = a-b$,  so $\beta_{\text{right}} = \chi_{E}$ where $E = \{\eta_j \, : \, j\in J\}$.  	
		
	Conversely, suppose $\beta_{\text{right}} = \chi_E$ where $E$ is as in the previous paragraph.  In the recursion to compute $\mult_{+}^{\vec{\beta}}(\gamma)$, each right turn occurs on the left edge. At each stage of the recursion, there is exactly one choice, cut the corner or complete the parallelogram, that leads to a positive multiplicity. If the step-size before the right turn is $\eta_{j}$ for $j\in J$, then complete the parallelogram; otherwise, cut the corner. See Figure \ref{fig:1-step}. That these are the only choices again follows from Proposition \ref{prop:downsteps} and the fact that the $\eta_J$'s are distinct. The triangles obtained by cutting the corners have areas $\eta_i$ for $i \notin J$, so $\mult_{+}^{\vec{\beta}}(\gamma) = \prod_{\eta_{i} \notin E} \eta_{i}$, as required. 
\end{proof}

As before, denote by $\sD(\gamma)$ the down steps of $\gamma$, and denote by $\sU\sR(\gamma)$ the up-right steps of $\gamma$. 
\begin{lemma}
	Suppose $\gamma$ is a $\lambda$-increasing path with $n+c+1$ steps and $\mult(\gamma) \neq 0$. 
	\begin{enumerate}
		\item The last step is the full right edge of $\Delta$.
		\item The first step off the left edge is up-right. 
		\item $\sD(\gamma) = \{\brak{0}{-x_1},\ldots,\brak{0}{-x_n},\brak{0}{-d}\}$.
		\item $\sU\sR(\gamma) = \{\brak{1}{x_{I_1}}, \ldots, \brak{1}{x_{I_{\ell}}} \}$ where $I_1,\ldots,I_{\ell}$ is a partition of $[n]$, and, for each $a=1,\ldots,\ell$ $\{\brak{0}{-x_i} \, : \, i \in I_{a}\}$ are among the down steps taken before $\brak{1}{x_{I_{a}}}$. 
		\item The remaining steps are of the form $\brak{1}{0}$.   
	\end{enumerate}	
\end{lemma}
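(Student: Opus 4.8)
The plan is to pin down the five properties one at a time, using the three structural propositions together with Lemma~\ref{lem:1-step} as the main inputs, after first restricting the possible step types. Throughout I work with $\Delta=[0,c]\times[0,d]$, source $p=(0,d)$ and sink $q=(c,0)$ for $\lambda$.

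First I would classify the steps of a path $\gamma$ with $\mult(\gamma)\neq 0$. Since $\mu_{2}=(1^{c})$, Proposition~\ref{prop:noSEstepPositiveMultiplicity} shows every step moves at most one unit right and that there are no down-right steps. A left-moving step is impossible: a $\lambda$-increasing step $\brak{a}{b}$ with $a\leq -1$ forces $b<-1/\varepsilon$, contradicting the height bound $|b|\leq d$ for $\varepsilon$ small. Hence every step is a down step $\brak{0}{-k}$, a right step $\brak{1}{0}$, or an up-right step $\brak{1}{k}$ with $k\geq 1$; this is exactly property $(5)$, and it already shows the $x$-coordinate is nondecreasing, increasing by exactly $1$ at each horizontal step. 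Since the net horizontal displacement from $p$ to $q$ is $c$, there are exactly $c$ horizontal steps, hence $n+1$ down steps.

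Next I would establish $(1)$ and $(3)$. The path ends at $q=(c,0)$; by Proposition~\ref{prop:canthappen} it has no lattice point in the relative interior of the right or bottom edges, and combined with the no-down-right restriction this forces the penultimate vertex to be the corner $(c,d)$, so the last step is $\brak{0}{-d}$, the full right edge. This is $(1)$, and it shows $\brak{0}{-d}\in\sD(\gamma)$. For $(3)$, since $\mult_{-}(\gamma)\neq0$ the recursion reduces $\gamma$ to $\delta_{-}$, which descends the left edge with down steps of sizes $x_{1},\dots,x_{n}$; as $\gamma\prec_{L}\delta_{-}$, Proposition~\ref{prop:downsteps} gives $\{\brak{0}{-x_{1}},\dots,\brak{0}{-x_{n}}\}\subseteq\sD(\gamma)$. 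Because $\sD(\gamma)$ has exactly $n+1$ elements and already contains $\brak{0}{-d}$, equality holds, which is $(3)$.

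Finally, properties $(2)$ and $(4)$ would come from a column-by-column analysis. Because each step moves at most one unit right, the positive-multiplicity recursion resolves columnwise, and the portion of $\gamma$ between $x=i$ and $x=i+1$ is precisely the trapezoid of Lemma~\ref{lem:1-step}: the available down steps split into a carried-over set $E$ and a consumed set whose total equals the rise of the crossing step, with $E$ \emph{uniquely} determined by its size because genericity~\eqref{eq:walls} makes all subset-sums $x_{I}$ distinct. Tracking each $\brak{0}{-x_{i}}$ across columns, it is consumed by exactly one up-right step (it cannot survive to the final drop, since $\gamma$ returns to height $d$ at $(c,d)$), so the consuming index sets $I_{1},\dots,I_{\ell}$ partition $[n]$ and each up-right step is $\brak{1}{x_{I_{a}}}$ with its feeding down steps occurring earlier, giving $(4)$. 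Property $(2)$ then follows since the first column's crossing must consume a nonempty set: a plain right step there would leave the pending down steps to be resolved at heights meeting the interior of the bottom edge, contradicting Proposition~\ref{prop:canthappen}. The main obstacle is precisely this assembly step in $(4)$ — upgrading the purely local splitting of Lemma~\ref{lem:1-step} in each column into a coherent global partition of $[n]$, showing every $x_{i}$ is consumed exactly once and that the ``before'' ordering is automatic — and this is exactly where genericity~\eqref{eq:walls} is indispensable, since without distinctness of the sums $x_{I}$ the per-column set $E$, and hence the partition, would not be well defined.
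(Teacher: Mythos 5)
Your arguments for parts (1), (3), (4) and (5) are correct and follow essentially the paper's own route: (1) from Proposition \ref{prop:canthappen} together with the exclusion of down-right steps, (3) from Proposition \ref{prop:downsteps} applied to the $\vec{\beta}_{-}$-initial path plus the count of horizontal steps, and (4)--(5) from the column-wise resolution of $\mult_{+}$ via Lemma \ref{lem:1-step}, with genericity \eqref{eq:walls} making the consumed subset in each column unique. Your opening classification of the possible steps is a clean packaging of what the paper uses implicitly.

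Part (2), however, has a genuine gap. You derive it inside the positive-multiplicity column analysis: a plain right step out of column $0$ ``would leave the pending down steps to be resolved at heights meeting the interior of the bottom edge.'' But the $\mult_{+}$ recursion cannot detect any problem with a horizontal first step: by Lemma \ref{lem:1-step}, a rise-$0$ crossing simply carries all of that column's down steps (possibly none) to the next column, where they hang from the \emph{top} edge, nowhere near the bottom of $\Delta$, and such paths routinely have $\mult_{+}\neq 0$. Concretely, take $c=3$, $n=2$, $d=x_1+x_2$, and the path
\begin{equation*}
(0,d)\to(0,x_2)\to(1,x_2)\to(2,d)\to(2,x_1)\to(3,d)\to(3,0).
\end{equation*}
It has $n+c+1=6$ steps, satisfies (1), (3), (4), (5), meets no point in the relative interior of the bottom or right edge (so Proposition \ref{prop:canthappen} applied to $\gamma$ itself gives no contradiction), its first step off the left edge is horizontal, and yet $\mult_{+}=x_1x_2\neq 0$. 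This path is excluded from the count only because $\mult_{-}=0$, a fact your argument never touches. That is exactly how the paper proves (2): it is a statement about the \emph{negative} multiplicity. Since a nonzero $\mult_{-}(\gamma)$ forces the recursion to terminate at $\delta_{-}$, which contains the bottom-left corner $(0,0)$, while $\gamma$ under the horizontal-first-step hypothesis does not contain $(0,0)$, some path in the $\mult_{-}$ recursion must create $(0,0)$ by completing a parallelogram; one then checks that such a parallelogram is forced to be a rectangle, so the completed path acquires the rectangle's other bottom vertex, a point in the relative interior of the bottom edge, contradicting Proposition \ref{prop:canthappen}. Any correct proof of (2) must engage with the $\mult_{-}$ recursion in this way; the upper-multiplicity bookkeeping you set up for (4) cannot be made to deliver it.
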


\begin{proof}
	Statement (1) follows from Proposition \ref{prop:canthappen}. 
	
	Consider (2). Suppose the first step off the left edge is $\brak{1}{0}$.  In order for $\mult_{-}(\gamma) \neq 0$, the bottom-left corner $x$ of $\Delta$ must be a point in a lattice path $\gamma'$ with $\gamma \prec_{L} \gamma'$. The only way to reach $x$ is to complete a parallelogram to it. Such a parallelogram must be a rectangle, but the resulting path $\gamma'$ has a point in the relative interior of the bottom edge. This contradicts Proposition \ref{prop:canthappen}.  
	
	Consider (3). Any $\beta_{-}$--initial path has as down steps exactly $\{\brak{0}{-x_1},\ldots,\brak{0}{-x_n}\}$. So, by (1) and  Proposition \ref{prop:downsteps}, we have $\{\brak{0}{-x_1},\ldots,\brak{0}{-x_n},\brak{0}{-d}\} \subset \sD(\gamma)$. By Proposition \ref{prop:noSEstepPositiveMultiplicity}(1), exactly $c$ of the steps cannot be down, so there are exactly $n+1$ down-steps. This proves (3). 
	
	Finally, consider (4) and (5). Since $\mu_2 = (1,\ldots,1)$, the upper multiplicity is resolved column-wise, and there are no down-right steps by Proposition \ref{prop:noSEstepPositiveMultiplicity}. Set 
	\begin{equation*}
	J_{\leq a}(\gamma) = \{j\in [n] \, : \, \brak{0}{-x_j} \text{ is a down-step of } \gamma \text{ in the columns } x=b \text{ for } b\leq a\}.
	\end{equation*}
	Write $\gamma_a$ for the path obtained by resolving the first $a$ columns of $\gamma$. By induction and Lemma \ref{lem:1-step}, the down-steps of $\gamma_a$ along $x=a$ are $\brak{0}{-x_i}$ for $i$ in  $I = J_{\leq a}(\gamma) \setminus (I_{1} \cup \cdots \cup I_{a-1})$ and the step to column $x=a+1$ is either $\brak{1}{0}$ or $\brak{1}{x_{I_a}}$ for some $I_{a} \subset I$. The union $I_{1} \cup \cdots \cup I_{\ell}$ is $[n]$ since $\gamma$ must reach the top-right corner of $\Delta$ (by (1)), and the down-steps before the right-edge have sizes $x_{1},\ldots,x_{n}$. 
\end{proof}

Thus, we may enumerate the lattice paths $\gamma$ with $\mult(\gamma) \neq 0$. Choose pairwise disjoint sequences $A_0,\ldots,A_{c-1}$ in $[n]$, and pairwise disjoint subsets $B_{0},\ldots,B_{c-1}$ of $[n]$ such that
\begin{enumerate}
	\item $\cup A_{i} = [n] = \cup B_i$;
	\item $B_{0} \neq \emptyset$, and
	\item $B_k \subset \left( \bigcup_{i=0}^{k} A_i \right) \setminus  \bigcup_{i=0}^{k-1} B_i$.
\end{enumerate}
Set $\mathbf{A} = (A_0,\ldots,A_{c-1})$ and $\mathbf{B} = (B_0,\ldots,B_{c-1})$. Let $\gamma(\mathbf{A},\mathbf{B})$ be the unique $\lambda$-increasing lattice path such that, in the $k$-th column (for $k=0,1,\ldots,c-1$), there are downward steps of sizes $(x_i \, : \, i \in A_{k})$ in order, and the step to the next column goes $\sum_{i\in B_k} x_i$ units up (or 0 units up, if $B_{k} =\emptyset$). 

Note that in particular, the number of such paths is finite and does not depend on the concrete values for our variable $x_1,\ldots,x_n$, as long as they satisfy the genericity inequalities mentioned above.
Before we can prove that for each path $\gamma(\mathbf{A},\mathbf{B})$, the multiplicity is polynomial in the $x_i$ (away from the inequalities from above), we further study properties of lattice path multiplicities more generally.

\subsection{Generalized Mikhalkin position and cut tropical curves}\label{subsec-generalizedmikpos}

Denote by $\mathfrak{S}_{n-1}$ the symmetric group on $n-1$ numbers. Given a $n$-step lattice path $\gamma:[0,n] \to \Delta$ and $\sigma \in \mathfrak{S}_{n-1}$, define $\mult_{-}(\gamma,\sigma)$ recursively in the following way. The definition is similar to $\mult_{-}(\gamma)$, except that in the recursive step, instead of letting $k\in [1,n-1]$ be the smallest number such that there is a right turn at $k$, let $k\in [1,n-1]$ be
\begin{equation}
\label{eq:permutedRightTurn}
	k = \argmin(\sigma(i) \, : \,  \text{ there is a right turn at } i  ).
\end{equation}
Denote by $\gamma'$ the path obtained by cutting this corner, and $\gamma''$ the path obtained by completing the parallelogram. Then
\begin{equation*}
\mult_{-}(\gamma,\sigma) = 2 \cdot \Area T \cdot \mult_{-}(\gamma', \sigma') + \mult_{-}(\gamma'', \sigma). 
\end{equation*}
Here, $\sigma' = \partial_{k} \circ \sigma \circ \delta^{\sigma^{-1}(k)}$ where $\delta^{i}:[n-2] \to [n-1]$ be the unique order-preserving injection whose image misses $i$ and $\partial_{i}:[n-1] \to [n-2]$ the unique order-preserving surjection such that $\partial_{i}(i+1) = i$.  When $\sigma$ is the identity permutation, $\mult_{-}(\gamma,\sigma)$ is exactly the definition of $\mult_{-}(\gamma)$. 

The function $\mult_{-}(\gamma,\sigma)$ is related to tropical curve counting in the following way. As before, fix a line $L_\lambda$ with a small negative irrational slope.  

\begin{definition}
	\label{def:GeneralizedMikhalkinPosition}
	Let $\vec{p} = (p_1,\ldots,p_n)$ be a sequence of point on $L_\lambda$ with increasing $x$-values, and set $u_{i} = \dist(p_i,p_{i+1})$.  For a permutation $\sigma \in \mathfrak{S}_{n-1}$, the points $\vec{p}$ are in \textit{$\sigma$--Mikhalkin position } if 
	\begin{equation*}
	u_{\sigma(1)} \ll u_{\sigma(2)} \ll \cdots \ll u_{\sigma(n-1)}.
	\end{equation*} 
\end{definition}
\noindent Note that ordinary Mikhalkin position is the case where $\sigma$ is the identity. 

\begin{definition}
	Fix a $\lambda$-increasing lattice path $\gamma$ with $n$ steps, and points $\vec{p}$ on $L_\lambda$. A \textit{lower} $(\lambda, \gamma, \vec{p})$--  \textit{tropical curve} is a weighted rational 1-dimensional polyhedral complex $\Gamma_{-}$ in the half-space below $L_\lambda$ satisfying the following properties.
	\begin{enumerate}
		\item Each point on $\Gamma_{-} \setminus L_\lambda$ satisfies the balancing condition.
		\item Every connected component of $\Gamma_{-}$ intersects $L_\lambda$ and $\Gamma_{-} \cap L_\lambda = \{p_1,\ldots,p_{n}\}$.
		\item Each $p_i$ is a 1-valent vertex of $\Gamma_{-}$ and its adjacent edge is dual to the $i$th step in $\gamma$.   
	\end{enumerate}
\end{definition}
\noindent Such a $\Gamma_{-}$ should be thought of as the part of a full tropical curve $\Gamma$ that lies below $L_\lambda$. 

\begin{lemma}
	\label{lem:multiplicityLowerTropicalCurves}
	Let $\gamma$ be a $n$-step $\lambda$-increasing lattice path, $\vec{p} = (p_1,\ldots,p_n)$ a sequence of points on $L_\lambda$ in $\sigma$--Mikhalkin position. The number of lower $(\lambda, \gamma, \vec{p})$--tropical curves, counted with multiplicity, is $\mult_{-}(\gamma,\sigma)$. 
\end{lemma}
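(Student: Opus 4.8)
The plan is to prove the lemma by induction on the lattice area enclosed between $\gamma$ and the $\vec\beta_-$--initial path $\delta_-$, exploiting the extreme scale separation in $\sigma$--Mikhalkin position to force the interactions among the downward edges to occur one at a time, in the order dictated by $\sigma$, and to match this geometric resolution with the recursion defining $\mult_-(\gamma,\sigma)$. The base case is $\gamma=\delta_-$: here the only lower $(\lambda,\gamma,\vec{p})$--tropical curve is the disjoint union of the $n$ downward rays emanating from $p_1,\dots,p_n$ in the directions dual to the steps of $\gamma$, carrying no interior vertex, and it is counted with multiplicity $1=\mult_-(\delta_-,\sigma)$. If instead $\gamma$ has no right turn but is not initial, then the downward rays cannot assemble into a valid lower curve matching the boundary data, and both the geometric count and $\mult_-(\gamma,\sigma)$ vanish.

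For the inductive step I would first set up the geometric dictionary. Because the gaps satisfy $u_{\sigma(1)}\ll u_{\sigma(2)}\ll\cdots\ll u_{\sigma(n-1)}$, as one moves away from $L_\lambda$ into the lower half-plane, the edges dual to two consecutive steps $i,i+1$ can converge to a trivalent vertex below $L_\lambda$ only when $\gamma$ makes a right turn at $i$; a left turn forces the corresponding edges to diverge downward, so it never produces a vertex below $L_\lambda$ (this is precisely the duality that makes $\mult_-$ use right turns in Definition \ref{def-multpath}). Consequently the vertex closest to $L_\lambda$ must involve the right-turn pair with the smallest gap, which is exactly the index $k=\argmin(\sigma(i)\,:\,\text{right turn at }i)$ selected in Equation (\ref{eq:permutedRightTurn}). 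At this first interaction there are precisely two local possibilities, matching the two branches of the recursion: either the edges dual to steps $k$ and $k+1$ meet at a trivalent vertex $V$, dual to the triangle $T=\gamma(k-1)\gamma(k)\gamma(k+1)$ and contributing the vertex multiplicity $2\Area T$, or the two edges cross transversally without meeting, dual to the parallelogram completing the corner and contributing the factor $1$.

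In the meeting case I would merge $p_k$ and $p_{k+1}$ (whose separation is negligible relative to all coarser gaps) into a single marked point carrying the merged downward edge, producing a configuration of $n-1$ points whose remaining gaps are $u_1,\dots,\widehat{u_k},\dots,u_{n-1}$ in the same relative order. The associated path is $\gamma'$, the corner cut, and one checks that the reduced configuration is in $\sigma'$--Mikhalkin position, where $\sigma'=\partial_k\circ\sigma\circ\delta^{\sigma^{-1}(k)}$ is exactly the relabeling that deletes the resolved gap and renumbers both positions and ranks. Since $\gamma'$ encloses strictly smaller area with $\delta_-$, the inductive hypothesis gives that the number of lower tropical curves for $\gamma'$ is $\mult_-(\gamma',\sigma')$, and each extends uniquely to a lower curve for $\gamma$ by reattaching the vertex $V$, so this branch contributes $2\Area T\cdot\mult_-(\gamma',\sigma')$. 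In the crossing case the point configuration is unchanged and the ordering is still governed by $\sigma$, while the path becomes $\gamma''$, which again encloses strictly smaller area; the inductive hypothesis yields the contribution $\mult_-(\gamma'',\sigma)$. Summing the two branches reproduces $\mult_-(\gamma,\sigma)=2\Area T\cdot\mult_-(\gamma',\sigma')+\mult_-(\gamma'',\sigma)$, closing the induction.

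The main obstacle I anticipate is making the ``resolve the closest interaction first'' step fully rigorous. One must verify that in $\sigma$--Mikhalkin position the two local resolutions above are genuinely the only possibilities, that no third edge can interfere at the smallest right-turn gap, and that after merging the configuration remains in the scale-separated position required to invoke the inductive hypothesis. This demands a careful quantitative estimate showing that the hierarchy $u_{\sigma(1)}\ll\cdots\ll u_{\sigma(n-1)}$ persists under deletion of a gap and that the vertex $V$ arising from the smallest right-turn pair lies strictly closer to $L_\lambda$ than any vertex coming from coarser gaps. This is the same large-scale-separation mechanism underlying Mikhalkin's original proof of Theorem \ref{thm-latticepaths}, now bookkept through the permutation $\sigma$ in place of a fixed linear order.
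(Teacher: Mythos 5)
Your proposal is correct and takes essentially the same route as the paper's own (sketched) proof: both grow the edges downward from the $p_i$, identify the first interaction as the pair at the $\sigma$-minimal right turn $k$ of Equation~\eqref{eq:permutedRightTurn}, match the vertex/crossing dichotomy with the triangle/parallelogram branches of the recursion, and then recurse on the configuration cut out by a lower parallel line, which is exactly your induction on the area between $\gamma$ and $\delta_-$ with the $\sigma'$ bookkeeping. The scale-separation details you flag as the main obstacle are precisely what the paper also leaves implicit, deferring to Mikhalkin's original argument in \cite[\S7, Theorem~2]{Mi03}.
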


\begin{proof}
	This is essentially the same as in the proof that Mikhalkin's lattice path algorithm counts tropical curves, see \cite[\S7, Theorem~2]{Mi03}. We sketch a proof here. Suppose $(p_1,\ldots,p_n)$ are in $\sigma$--Mikhalkin position. Allow the edges emanating from the $p_{i}$'s to grow. Let $k$ be the integer in Formula \eqref{eq:permutedRightTurn}.  The first pair of edges to meet corresponds those emanating from $p_{k}$ and $p_{k+1}$. When they meet, they either form a 3-valent vertex or a crossing, corresponding to either a triangle or a parallelogram in the lattice path recursion.  This all takes place in a small strip below the line $L_\lambda$, so now replace $L_\lambda$ with its parallel counterpart $L_\lambda'$, and replace $p_{1},\ldots,p_{n}$ with the intersection of these germs with $L_\lambda'$. See Figure \ref{fig:generalizedMikhalkinPosition} for an example.  
\end{proof}

\begin{figure}
	\includegraphics[height=3cm]{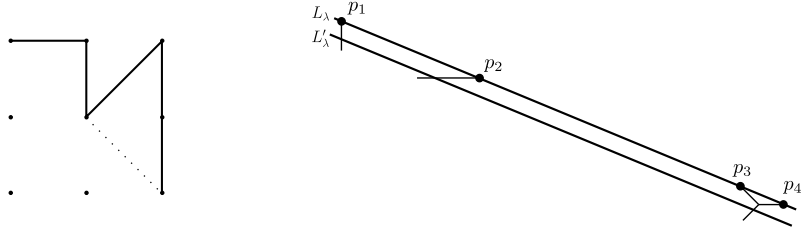}
	\caption{A step in the lattice path recursion, and its tropical curve counterpart. The points $p_1,p_2,p_3,p_4$ are in $\sigma$--Mikhalkin position for the permutation $\sigma = [3,1,2]$. }
	\label{fig:generalizedMikhalkinPosition}
\end{figure}

\begin{lemma}
	\label{lem:multIndependentOfSigma}	
	Given any two ordering $\sigma_1, \sigma_2$, we have that $\mult_{-}(\gamma,\sigma_1) = \mult_{-}(\gamma,\sigma_2)$. 
\end{lemma}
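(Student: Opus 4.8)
The plan is to reduce the statement to the position-independence of a tropical count, using the geometric interpretation just established. By Lemma~\ref{lem:multiplicityLowerTropicalCurves}, for $j=1,2$ the number $\mult_{-}(\gamma,\sigma_j)$ equals the weighted number of lower $(\lambda,\gamma,\vec{p}^{\,j})$--tropical curves, where $\vec{p}^{\,j}$ is any sequence of points on $L_\lambda$ in $\sigma_j$--Mikhalkin position. Hence it suffices to show that the weighted number of lower $(\lambda,\gamma,\vec{p})$--tropical curves depends only on $\gamma$ and not on the chosen points $\vec{p}$ on $L_\lambda$, as long as these are in general position; since every $\sigma$--Mikhalkin position is such a generic configuration, this immediately yields $\mult_{-}(\gamma,\sigma_1)=\mult_{-}(\gamma,\sigma_2)$.

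To establish this independence I would adapt the standard deformation argument proving that $N^{\trop}$ does not depend on the point conditions \cite{GM07c,Mi03}, working in the half-space below $L_\lambda$. The directions of the $n$ downward germs are fixed by $\gamma$, and only the positions of the $p_i$ along $L_\lambda$ vary. For a generic configuration the contributing lower curves are finite in number and rigid, and the weighted count is locally constant, because each contributing curve deforms within its combinatorial type while none can be created or destroyed. The count can change only when $\vec{p}$ crosses a wall, i.e.\ a codimension-one locus where general position fails, and any two of our configurations are joined by a generic path meeting these walls transversally; so it is enough to verify invariance across a single wall.

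It suffices, moreover, to treat the walls separating chambers of adjacent $\sigma$--Mikhalkin positions --- those on which two of the distances $u_i=\dist(p_i,p_{i+1})$ become comparable --- since adjacent transpositions generate $\mathfrak{S}_{n-1}$ and since along a path keeping the remaining distances super-separated only such walls are crossed. Crossing such a wall swaps the temporal order of the two ``meeting events'' of growing germs governed by these two distances, in the growing-edge picture of Lemma~\ref{lem:multiplicityLowerTropicalCurves}. When the two events involve disjoint edges they commute and the count is visibly preserved; the substantive case, and the part I expect to be the main obstacle, is when they share an edge, so that the two germs compete for the same partner. There one inspects the finitely many local pictures directly and checks that the triangle (cut-a-corner) and parallelogram (complete-the-corner) contributions recombine to the same total multiplicity on both sides of the wall. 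This local invariance is a bounded computation of exactly the same nature as in the proof of invariance for ordinary Mikhalkin position, and combining it along the connecting path gives $\mult_{-}(\gamma,\sigma_1)=\mult_{-}(\gamma,\sigma_2)$, as desired.
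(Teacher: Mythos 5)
Your proposal is correct and follows essentially the same route as the paper: both reduce the statement via Lemma~\ref{lem:multiplicityLowerTropicalCurves} to the fact that the weighted count of lower $(\lambda,\gamma,\vec{p})$--tropical curves is independent of the position of the points $\vec{p}$ on $L_\lambda$. The only difference is that the paper simply cites this independence (Theorem~4.8 of \cite{GM07c}), whereas you sketch a wall-crossing argument re-proving it; that extra work is sound but not needed.
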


\begin{proof}
	This follows from Lemma \ref{lem:multiplicityLowerTropicalCurves} and the fact that the number of lower $(\lambda,\gamma,\vec{p})$--tropical curves, counted with multiplicity, is independent of $\vec{p}$ \hspace{0.03cm}  (see Theorem 4.8 in \cite{GM07c}). 
\end{proof}

The following statement is used to show the piecewise polynomiality of a path $\gamma(\mathbf{A},\mathbf{B})$:

\begin{proposition}\label{prop:rightturns}
	The negative multiplicity of a lattice path stays the same if at each step of the recursion, we allow to use any right turn to perform the recursion, not necessarily the first right turn. 
\end{proposition}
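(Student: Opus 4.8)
The plan is to prove the slightly stronger statement that the \emph{free-choice} recursion—in which at every recursive step we may resolve at any available right turn, with the choices made independently in the two branches produced by each resolution—is in fact well-defined, i.e.\ returns a value independent of all the choices, and that this common value is the standard $\mult_-(\gamma)$. I would argue by induction on the lattice area $A(\gamma)$ enclosed between $\gamma$ and the initial path $\delta_-$, which is precisely the quantity that strictly decreases under both corner-cutting and parallelogram-completion (as already noted in Definition \ref{def-multpath}).

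For the base case $A(\gamma)=0$ the path is $\delta_-$ itself: there are no right turns, the value is $1=\mult_-(\gamma)$, and there is no choice to make. For the inductive step I fix $\gamma$ and let $r$ be an arbitrary right turn chosen for the first resolution, producing $\gamma'$ (corner cut) and $\gamma''$ (parallelogram completed), both of strictly smaller area. By the inductive hypothesis, every free-choice computation of the multiplicities of $\gamma'$ and $\gamma''$ returns the well-defined standard values $\mult_-(\gamma')$ and $\mult_-(\gamma'')$, so the number produced by resolving first at $r$ is
\[
V(r)=2\,\Area(T)\,\mult_-(\gamma')+\mult_-(\gamma'').
\]
It then remains only to show that $V(r)$ is independent of the choice of $r$.

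This is exactly where the $\sigma$-formalism of \S\ref{subsec-generalizedmikpos} does the work, letting me avoid confronting the delicate confluence analysis of two right turns directly. Given the chosen turn $r$, I select a permutation $\sigma\in\mathfrak{S}_{n-1}$ with $\sigma(r)=1$, so that $r$ is precisely the turn picked out by Formula \eqref{eq:permutedRightTurn}. Unwinding one step of the definition of $\mult_-(\gamma,\sigma)$ gives $\mult_-(\gamma,\sigma)=2\,\Area(T)\,\mult_-(\gamma',\sigma')+\mult_-(\gamma'',\sigma)$ with $\sigma'=\partial_{k}\circ\sigma\circ\delta^{\sigma^{-1}(k)}$; applying Lemma \ref{lem:multIndependentOfSigma} to the smaller paths $\gamma'$ and $\gamma''$ replaces $\mult_-(\gamma',\sigma')$ and $\mult_-(\gamma'',\sigma)$ by $\mult_-(\gamma')$ and $\mult_-(\gamma'')$, whence $\mult_-(\gamma,\sigma)=V(r)$. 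Finally Lemma \ref{lem:multIndependentOfSigma}, applied now to $\gamma$ itself, gives $\mult_-(\gamma,\sigma)=\mult_-(\gamma)$ for every $\sigma$, so $V(r)=\mult_-(\gamma)$ independently of $r$. This closes the induction and proves the proposition.

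The only real subtlety I expect is the index bookkeeping: checking that a permutation with $\sigma(r)=1$ genuinely realizes $r$ as the $\sigma$-minimal right turn, and that the conventions for $\sigma'=\partial_{k}\circ\sigma\circ\delta^{\sigma^{-1}(k)}$ on the branch paths are exactly those to which Lemma \ref{lem:multIndependentOfSigma} applies. The conceptual point worth emphasizing is that routing through the position-independence of the tropical count (Lemma \ref{lem:multiplicityLowerTropicalCurves} feeding Lemma \ref{lem:multIndependentOfSigma}) sidesteps a direct diamond-lemma argument entirely; its hard case—two right turns sharing an edge, where resolving one alters the other—would otherwise demand a careful case analysis that the inductive route never has to touch.
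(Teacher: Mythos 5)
Your proposal is correct and follows essentially the same route as the paper's own proof: induction on the enclosed area, realizing an arbitrary right turn $r$ as the turn selected by Formula \eqref{eq:permutedRightTurn} via a permutation $\sigma$ with $\sigma(r)$ minimal, and then invoking Lemma \ref{lem:multIndependentOfSigma} to replace the $\sigma$-dependent multiplicities by the standard ones. Your write-up is somewhat more explicit than the paper's (in particular in spelling out that the free-choice recursion with independent choices in branches is well-defined, and in applying Lemma \ref{lem:multIndependentOfSigma} to $\gamma$ itself to conclude $V(r)=\mult_-(\gamma)$), but the underlying argument is identical.
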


The analogous statement holds for the positive multiplicity and left turns.

\begin{proof}
	We proceed by induction on the area of the region in $\Delta$ bounded above by $\gamma$. Suppose $\gamma$ has $n$ steps, and suppose there is a right turn at $k$.  Choose a permutation $\sigma$ such that $k$ satisfies Equation \eqref{eq:permutedRightTurn}. By choosing this right turn in the recursion, the expression for the multiplicity is 
	\begin{equation*}
		2 \cdot \Area T \cdot \mult_{-}(\gamma',\sigma') + \mult_{-}(\gamma'',\sigma)
	\end{equation*}
	which equals 
	\begin{equation*}
	2 \cdot \Area T \cdot \mult_{-}(\gamma') + \mult_{-}(\gamma'')
	\end{equation*}
	by Lemma \ref{lem:multIndependentOfSigma}. The proposition now follows from the inductive hypothesis.  
\end{proof}

\begin{example}
We compute the negative multiplicity of the path from Example \ref{ex-latticepath}, see Figure \ref{fig-pathex}. There, we always picked the first right turn in the recursion. Now, we start by picking the second right turn. We then obtain the two paths $\gamma'$ and $\gamma''$ depicted in Figure \ref{fig-pathex3}.
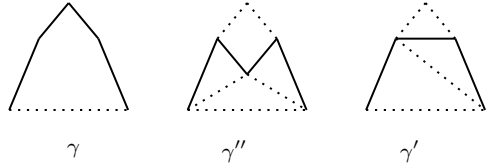
\begin{figure}

\tikzset{every picture/.style={line width=0.75pt}} 

\begin{tikzpicture}[x=0.75pt,y=0.75pt,yscale=-1,xscale=1]

\draw    (210,144) -- (225,108) ;
\draw    (225,108) -- (240,126) ;
\draw    (240,126) -- (255,108) ;
\draw    (165,108) -- (180,144) ;
\draw  [dash pattern={on 0.84pt off 2.51pt}]  (120,144) -- (180,144) ;
\draw    (315,108) -- (345,108) ;
\draw  [dash pattern={on 0.84pt off 2.51pt}]  (210,144) -- (240,126) ;
\draw  [dash pattern={on 0.84pt off 2.51pt}]  (225,108) -- (240,90) ;
\draw  [dash pattern={on 0.84pt off 2.51pt}]  (210,144) -- (270,144) ;
\draw  [dash pattern={on 0.84pt off 2.51pt}]  (300,144) -- (360,144) ;
\draw    (120,144) -- (135,108) ;
\draw    (150,90) -- (165,108) ;
\draw    (135,108) -- (150,90) ;
\draw    (255,108) -- (270,144) ;
\draw    (345,108) -- (360,144) ;
\draw    (300,144) -- (315,108) ;
\draw  [dash pattern={on 0.84pt off 2.51pt}]  (315,108) -- (330,90) ;
\draw  [dash pattern={on 0.84pt off 2.51pt}]  (345,108) -- (330,90) ;
\draw  [dash pattern={on 0.84pt off 2.51pt}]  (255,108) -- (240,90) ;
\draw  [dash pattern={on 0.84pt off 2.51pt}]  (240,126) -- (270,144) ;
\draw  [dash pattern={on 0.84pt off 2.51pt}]  (315,108) -- (360,144) ;

\draw (156,164.7) node  [font=\scriptsize] [align=left] {\begin{minipage}[lt]{10.2pt}\setlength\topsep{0pt}
$\displaystyle \gamma $
\end{minipage}};
\draw (234,164.7) node  [font=\scriptsize] [align=left] {\begin{minipage}[lt]{10.2pt}\setlength\topsep{0pt}
$\displaystyle \gamma ''$
\end{minipage}};
\draw (324,164.7) node  [font=\scriptsize] [align=left] {\begin{minipage}[lt]{10.2pt}\setlength\topsep{0pt}
$\displaystyle \gamma '$
\end{minipage}};

\end{tikzpicture}

\caption{Choosing another right turn to compute the negative multiplicity of the path from Example \ref{ex-latticepath} yields the same negative multiplicity.}\label{fig-pathex3}
\end{figure}
For the path $\gamma''$, it does not matter which turn we pick next, we always have to cut triangles and they are the same for any choice. In total, we have 3 triangles of areas $4$, $3$ and $3$ yielding the multiplicity $36$ for $\gamma''$. To continue the recursion for $\gamma'$, we pick the second right turn of $\gamma'$, as indicated in the picture. Picking the first right turn yields a symmetric subdivision. We have to cut two triangles of areas $8$ and $4$, and to get to $\gamma'$ we already cut a triangle of are $2$. Altogether, we obtain $32$ of the multiplicity  of  $\gamma'$, and $36+2\cdot 32=100$ for the multiplicity of $\gamma$, i.e., the same value we obtained by always picking the first right turn as in Example \ref{ex-latticepath}.
\end{example}

\subsection{Final step of the proof of Theorem \ref{maintheorem1}: the multiplicity of each path that contributes is polynomial}\label{subsec-multpp}

\begin{proposition}
	The multiplicity of $\gamma(\mathbf{A},\mathbf{B})$ is polynomial in $x_1,\ldots,x_n$.
\end{proposition}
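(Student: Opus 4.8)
The plan is to use the factorization $\mult(\gamma) = \mult_+(\gamma)\cdot\mult_-(\gamma)$ from Definition \ref{def-multpath} and to show that each factor, for the fixed path $\gamma = \gamma(\mathbf{A},\mathbf{B})$, is a polynomial in $x_1,\ldots,x_n$ on the generic locus cut out by \eqref{eq:walls}.

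For the positive factor the argument is short. Since $\mu_2 = (1,\ldots,1)$, Proposition \ref{prop:noSEstepPositiveMultiplicity}(1) forces the recursion computing $\mult_+$ to proceed column by column, and Lemma \ref{lem:1-step} evaluates each column. In the $k$-th column the down-steps present are the $\brak{0}{-x_i}$ that have been deposited but not yet carried up, the carried set is exactly $B_k$, and the lemma gives the column's contribution as the product of the sizes of the down-steps that are cut off, i.e.\ a product of some of the $x_i$. Multiplying over the $c$ columns, each index $i$ is cut exactly once---in the unique column where its down-step is resolved into a triangle rather than carried up---so $\mult_+(\gamma)$ is a single monomial $\prod_{i} x_i$ in a fixed subset of the variables, in particular a polynomial.

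The harder factor is $\mult_-(\gamma)$. First I would record that every step occurring in any path produced by the recursion of Definition \ref{def-multpath} is a sum of consecutive original steps of $\gamma$: cutting a corner replaces two consecutive steps by their sum, while completing a parallelogram merely swaps them. Since the original steps all have the form $\brak{0}{-x_i}$, $\brak{1}{x_I}$, $\brak{1}{0}$, or $\brak{0}{-d}$, every such step is a vector $\brak{p}{q}$ whose first coordinate $p\in\ZZ$ is a fixed integer and whose second coordinate $q$ is an integer-linear form in $x_1,\ldots,x_n$ (using $d=\sum_i x_i$). Hence each elementary factor $2\,\Area T = |\det(s,s')| = |p\,q' - p'\,q|$ is, away from the walls \eqref{eq:walls}, a fixed sign times an integer-linear form in the $x_i$.

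It remains to see that the combinatorial shape of the recursion---which corners are cut, which are completed, and whether each completed parallelogram $\gamma''$ lands inside $\Delta$---is constant throughout the generic locus; this is the crux. Using the freedom of Proposition \ref{prop:rightturns}, I would perform the right turns in a fixed order dictated by the columns of $\gamma(\mathbf{A},\mathbf{B})$ rather than by the metric sizes of the $x_i$. Whether a given corner is a right turn is governed by the sign of a determinant, i.e.\ of an integer-linear form, hence is constant off the walls; and whether a completed $\gamma''$ maps into $\Delta$ amounts to comparing a vertex coordinate (again a partial sum of the $x_i$, possibly shifted by $d$) with the constants $0$ and $d$, which is a comparison among the sums $x_I$ and is therefore controlled precisely by \eqref{eq:walls}. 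Thus the rooted recursion tree, with base case $\mult_-(\delta_-)=1$, has a combinatorial structure independent of the generic values, and unfolding it writes $\mult_-(\gamma)$ as a finite sum of products of the linear forms $2\,\Area T$, hence a polynomial. (Equivalently, one may invoke Lemma \ref{lem:multiplicityLowerTropicalCurves}: $\mult_-(\gamma)$ counts lower tropical curves whose finitely many combinatorial types are fixed across the generic locus, each contributing the product of its vertex determinants.) Multiplying the two polynomial factors then finishes the proof. I expect verifying this value-independence of the recursion skeleton---that crossing no wall of \eqref{eq:walls} alters which corners are cut or whether each $\gamma''$ stays in $\Delta$---to be the only genuine difficulty.
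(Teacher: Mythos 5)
Your computation of $\mult_{+}(\gamma)$ is fine and agrees with the paper, and you have correctly located the crux for $\mult_{-}(\gamma)$: constancy of the recursion skeleton across the generic locus. But your proposed justification of that constancy fails. The sign conditions governing the recursion (is a given corner a right turn; does a completed parallelogram stay inside $\Delta$) are signs of integer-linear forms in $x_1,\ldots,x_n$ whose coefficients need not lie in $\{0,\pm 1\}$: completing a parallelogram replaces $\gamma(k)$ by $\gamma(k-1)+\gamma(k+1)-\gamma(k)$, so iterated completions produce vertex coordinates that are general integer combinations of the $x_i$, not just partial sums $x_I$ (possibly shifted by $d$). Hence these signs are \emph{not} controlled by the walls \eqref{eq:walls}. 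The paper's Example \ref{ex:lowerMultiplicity} is a concrete counterexample to your claim: there $n=2$, the only wall is $x_1=x_2$, yet the combinatorics of the recursion changes across the locus $2x_1=x_2$ (equivalently $x_1=d/3$), which is not a wall --- one chamber produces two lower subdivisions, the other three. The same objection defeats your parenthetical alternative via Lemma \ref{lem:multiplicityLowerTropicalCurves}: the set of combinatorial types of lower tropical curves jumps across that same non-wall locus. So the step you yourself flag as ``the only genuine difficulty'' is not merely unverified; the route you sketch for it (sign constancy off the walls) is false, and a different idea is needed.

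The paper supplies that idea as a specific two-phase strategy of right turns, legitimized by Proposition \ref{prop:rightturns}, under which the recursion never branches for \emph{structural} reasons, independent of the values of the $x_i$. Phase 1: only choose right turns whose second step is a vertical down step (excluding the top-right corner); there one cannot cut, since by Proposition \ref{prop:downsteps} down steps can never be recreated and each $x_i$ down step must survive to the left edge where the $\vec{\beta}_{-}$-initial path requires it; so each such turn is a forced parallelogram completion, and these completions transport all $x_i$ down steps to the left edge. Phase 2: once the path from the bottom-left to the top-right corner is weakly increasing, always choose the \emph{last} right turn; there one cannot complete, since doing so would either exit $\Delta$ or create a point in the relative interior of the bottom edge (excluded by Proposition \ref{prop:canthappen}); so each such turn is a forced cut. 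The result is that $\mult_{-}(\gamma(\mathbf{A},\mathbf{B}))$ is a single product of triangle areas, each a fixed linear form in the $x_i$, with the product structure depending only on $(\mathbf{A},\mathbf{B})$ --- which is also why the theorem yields one polynomial valid on all chambers rather than a chamber-by-chamber formula.
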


\begin{proof}
	To simplify notation, fix $\gamma = \gamma(\mathbf{A}, \mathbf{B})$. The positive multiplicity of $\gamma$ is 
	\begin{equation*}
		\mult_{+}(\gamma) = \prod_{i=1}^{n} x_i.
	\end{equation*}
	This follows from Lemma \ref{lem:1-step} and the fact that the positive multiplicity is resolved columnwise. 
	The lower multiplicity is harder to compute using the original Mikhalkin lattice path algorithm, see Example \ref{ex:lowerMultiplicity}. Instead, we use Proposition \ref{prop:rightturns} and judiciously choose our right turns so that, at each step, there is exactly 1 choice in the algorithm, to cut or to complete the parallelogram.  
	
	First, we only choose right turns of the form up-right to vertically down, or right to vertically down, excluding the top-right corner. As all vertical down steps need to be transported to the left edge by Proposition \ref{prop:downsteps}, we must complete the parallelogram at this step. Eventually, we reach a $\gamma'$ such that all vertical down steps are along the left edge of $\Delta$ (of course, the $d$ down step is still along the right-edge of $\Delta$). In particular, the path from the bottom-left corner to the top-right corner of $\Delta$ is (weakly) increasing. Now, we always choose the last right turn. Inductively, we see that this will always be between the last two steps. At each phase of the algorithm, we cannot complete the parallelogram (doing so would either take the lattice path outside of $\Delta$, or along the relative interior of the bottom edge of $\Delta$, neither of which are allowed). Thus, we cut the corner at each phase. Thus, we found the requisite sequence of right turns. As each triangle has coordinates that are (linear) polynomials in the $x_i$, the areas of the triangles in the induced lower subdivision are polynomials in the $x_i$. This proves that $\mult(\gamma(\mathbf{A}, \mathbf{B}))$ is polynomial away from the walls defined by Formula \eqref{eq:walls}. 	
\end{proof}

\begin{example}
	\label{ex:lowerMultiplicity}
	Consider the rectangle $\Delta$ of width $c=3$ and height $d$, and let $n=2$. Consider the two lattices paths $\gamma_1, \gamma_2$ in Figure \ref{fig:lowerMultiplicityExample}. The first one satisfies $x_1 < d / 3$ whereas the second one satisfies $x_1 > d / 3$. In computing the lower multiplicity, there are 2 lower subdivisions for the first but three for the second. Nevertheless, as a consequence of Proposition \ref{prop:rightturns}, both lower multiplicities satisfy the same formula:
\begin{equation*}
\mult_{-}(\gamma_1) = \mult_{-}(\gamma_2) = 6d^2 x_1 - 3 d x_1^2.
\end{equation*}	
\end{example}

\begin{figure}
	\includegraphics[height=2.5cm]{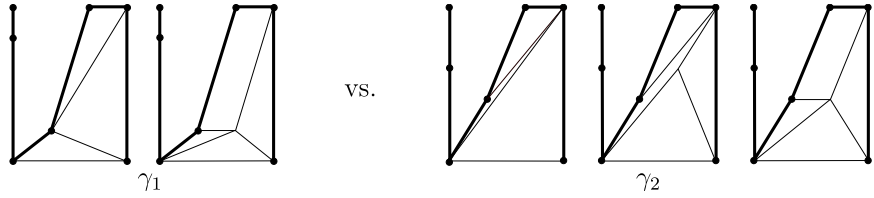}
	\caption{Lower multiplicities of two paths.}
	\label{fig:lowerMultiplicityExample}
\end{figure}

\subsection{Examples}\label{subsec-pathex}

In the following example, we consider a case for which we obtain a count which is polynomial in the entries of our boundary partitions:

\begin{example}
For some number $c$, set $\mu=((c),(c))$, i.e., we fix full contact order with the boundary sections. Let $\nu=(\nu_1,\nu_2)$ be such that no entry of $\nu_1$ appears in $\nu_2$ and vice versa.

A path $\gamma$ contributing to the count $N(\mu,\nu)$ has $n=\ell(\nu_1)+\ell(\nu_2)+1$ steps. The steps that $\gamma$ takes on the left edge of the rectangle of size $\ell(\nu_i)\cdot c$ must be of sizes that appear in $\nu_1$. It is possible for $\gamma$ to take all steps on the left edge, thus using up $\ell(\nu_1)$ steps. If $\gamma$ does not use up all these steps, but leaves $b$ steps of some sizes $\nu_{1i}$ to be filled, then there must be vertical down steps of these sizes somewhere in an interior column of the rectangle. This is true as these steps must appear while performing the recursion, and the only way to produce such steps is via completing a parallelogram. 
But then there are at least $b+2$ steps of $\gamma$ which are not on the left or right boundary edge of the rectangle: a step to the start of the vertical down steps, the vertical down steps, and a step away from them. As a consequence, there can be at most $n-\ell(\nu_1)-2=\ell(\nu_2)-1$ steps on the right edge of the rectangle. But then again, these steps must appear when performing the recursion. As we assumed that no entry of $\nu_1$ appears in $\nu_2$ and vice versa, we cannot produce them from the vertical down steps which produce the left steps of $\gamma$. But then they cannot be produced and thus such a path would have multiplicity $0$.

It follows that the only type of path which contributes to the count has $\ell(\nu_1)$ vertical down steps on the left edge, one step connecting the lower left with the upper right corner, and $\ell(\nu_2)$ vertical down steps on the right edge, see Figure \ref{fig-latticepathcc}.

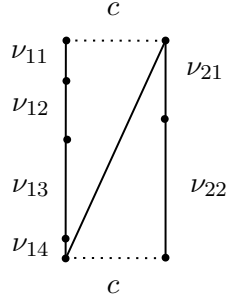
\begin{figure}
\begin{center}

\tikzset{every picture/.style={line width=0.75pt}} 

\begin{tikzpicture}[x=0.75pt,y=0.75pt,yscale=-1,xscale=1]

\draw    (200.28,110.13) -- (200.28,219.73) ;
\draw    (200.28,110.13) -- (150,220) ;
\draw    (149.88,220.13) -- (150,110) ;
\draw  [color={rgb, 255:red, 0; green, 0; blue, 0 }  ,draw opacity=1 ][fill={rgb, 255:red, 0; green, 0; blue, 0 }  ,fill opacity=1 ] (148.8,110.13) .. controls (148.8,109.31) and (149.47,108.65) .. (150.28,108.66) .. controls (151.09,108.66) and (151.75,109.32) .. (151.75,110.13) .. controls (151.75,110.95) and (151.09,111.6) .. (150.27,111.6) .. controls (149.46,111.6) and (148.8,110.94) .. (148.8,110.13) -- cycle ;
\draw  [color={rgb, 255:red, 0; green, 0; blue, 0 }  ,draw opacity=1 ][fill={rgb, 255:red, 0; green, 0; blue, 0 }  ,fill opacity=1 ] (148.8,130.53) .. controls (148.8,129.71) and (149.47,129.05) .. (150.28,129.06) .. controls (151.09,129.06) and (151.75,129.72) .. (151.75,130.53) .. controls (151.75,131.35) and (151.09,132) .. (150.27,132) .. controls (149.46,132) and (148.8,131.34) .. (148.8,130.53) -- cycle ;
\draw  [color={rgb, 255:red, 0; green, 0; blue, 0 }  ,draw opacity=1 ][fill={rgb, 255:red, 0; green, 0; blue, 0 }  ,fill opacity=1 ] (149.2,160.13) .. controls (149.2,159.31) and (149.87,158.65) .. (150.68,158.66) .. controls (151.49,158.66) and (152.15,159.32) .. (152.15,160.13) .. controls (152.15,160.95) and (151.49,161.6) .. (150.67,161.6) .. controls (149.86,161.6) and (149.2,160.94) .. (149.2,160.13) -- cycle ;
\draw  [color={rgb, 255:red, 0; green, 0; blue, 0 }  ,draw opacity=1 ][fill={rgb, 255:red, 0; green, 0; blue, 0 }  ,fill opacity=1 ] (148.4,210.13) .. controls (148.4,209.31) and (149.07,208.65) .. (149.88,208.66) .. controls (150.69,208.66) and (151.35,209.32) .. (151.35,210.13) .. controls (151.35,210.95) and (150.69,211.6) .. (149.87,211.6) .. controls (149.06,211.6) and (148.4,210.94) .. (148.4,210.13) -- cycle ;
\draw  [color={rgb, 255:red, 0; green, 0; blue, 0 }  ,draw opacity=1 ][fill={rgb, 255:red, 0; green, 0; blue, 0 }  ,fill opacity=1 ] (148.4,220.13) .. controls (148.4,219.31) and (149.07,218.65) .. (149.88,218.66) .. controls (150.69,218.66) and (151.35,219.32) .. (151.35,220.13) .. controls (151.35,220.95) and (150.69,221.6) .. (149.87,221.6) .. controls (149.06,221.6) and (148.4,220.94) .. (148.4,220.13) -- cycle ;
\draw  [color={rgb, 255:red, 0; green, 0; blue, 0 }  ,draw opacity=1 ][fill={rgb, 255:red, 0; green, 0; blue, 0 }  ,fill opacity=1 ] (198.8,110.13) .. controls (198.8,109.31) and (199.47,108.65) .. (200.28,108.66) .. controls (201.09,108.66) and (201.75,109.32) .. (201.75,110.13) .. controls (201.75,110.95) and (201.09,111.6) .. (200.27,111.6) .. controls (199.46,111.6) and (198.8,110.94) .. (198.8,110.13) -- cycle ;
\draw  [color={rgb, 255:red, 0; green, 0; blue, 0 }  ,draw opacity=1 ][fill={rgb, 255:red, 0; green, 0; blue, 0 }  ,fill opacity=1 ] (198.4,149.73) .. controls (198.4,148.91) and (199.07,148.25) .. (199.88,148.26) .. controls (200.69,148.26) and (201.35,148.92) .. (201.35,149.73) .. controls (201.35,150.55) and (200.69,151.2) .. (199.87,151.2) .. controls (199.06,151.2) and (198.4,150.54) .. (198.4,149.73) -- cycle ;
\draw  [color={rgb, 255:red, 0; green, 0; blue, 0 }  ,draw opacity=1 ][fill={rgb, 255:red, 0; green, 0; blue, 0 }  ,fill opacity=1 ] (198.8,219.73) .. controls (198.8,218.91) and (199.47,218.25) .. (200.28,218.26) .. controls (201.09,218.26) and (201.75,218.92) .. (201.75,219.73) .. controls (201.75,220.55) and (201.09,221.2) .. (200.27,221.2) .. controls (199.46,221.2) and (198.8,220.54) .. (198.8,219.73) -- cycle ;
\draw  [dash pattern={on 0.84pt off 2.51pt}]  (150.28,110.13) -- (200.55,110.26) ;
\draw  [dash pattern={on 0.84pt off 2.51pt}]  (149.72,219.87) -- (200,220) ;

\draw (135.58,117) node   [align=left] {\begin{minipage}[lt]{19.98pt}\setlength\topsep{0pt}
$\displaystyle \nu _{11}$
\end{minipage}};
\draw (135.99,143.66) node   [align=left] {\begin{minipage}[lt]{19.98pt}\setlength\topsep{0pt}
$\displaystyle \nu _{12}$
\end{minipage}};
\draw (136,184.5) node   [align=left] {\begin{minipage}[lt]{19.98pt}\setlength\topsep{0pt}
$\displaystyle \nu _{13}$
\end{minipage}};
\draw (136,214.5) node   [align=left] {\begin{minipage}[lt]{19.98pt}\setlength\topsep{0pt}
$\displaystyle \nu _{14}$
\end{minipage}};
\draw (224,125.5) node   [align=left] {\begin{minipage}[lt]{19.98pt}\setlength\topsep{0pt}
$\displaystyle \nu _{21}$
\end{minipage}};
\draw (226,184.5) node   [align=left] {\begin{minipage}[lt]{19.98pt}\setlength\topsep{0pt}
$\displaystyle \nu _{22}$
\end{minipage}};
\draw (184,94.5) node   [align=left] {\begin{minipage}[lt]{19.98pt}\setlength\topsep{0pt}
$\displaystyle c$
\end{minipage}};
\draw (184,234.5) node   [align=left] {\begin{minipage}[lt]{19.98pt}\setlength\topsep{0pt}
$\displaystyle c$
\end{minipage}};

\end{tikzpicture}

\end{center}
\caption{The only type of path contributing to $N(((c)(c)),\nu)$. In the exemplary picture, we assume $\ell(\nu_1)=4$ and $\ell(\nu_2)=2$.}\label{fig-latticepathcc}
\end{figure}

Only one subdivision fits underneath such a path, we can fill up with triangles of areas $c\cdot \nu_{ij}$. By choosing the order of the vertical down steps, we obtain $\frac{\ell(\nu_1)!}{\Aut{\nu_1}}\cdot \frac{\ell(\nu_2)!}{\Aut{\nu_2}}$ such paths.

Thus, altogether the count $N((c),(c),\nu_1,\nu_2)$ equals
\begin{equation*}
N((c),(c),\nu_1,\nu_2) = \frac{\ell(\nu_1)!}{\Aut{\nu_1}}\cdot \frac{\ell(\nu_2)!}{\Aut{\nu_2}} \cdot \prod_{i=1,2}\prod_{j=1}^{\ell(\nu_i)} \nu_{ij}\cdot c^{\ell(\nu_1)+\ell(\nu_2)}
\end{equation*}
which is polynomial in the $\nu_{ij}$ if we fix $\ell(\nu_i)$ and $c$.

The assumption that  no entry of $\nu_1$ appears in $\nu_2$ and vice versa was necessary for the argument: else, we could have contributions of reducible tropical curves. Thus, the space of all entries in $\nu$ is subdivided by walls given by equalities among entries in $\nu_1$ and entries in $\nu_2$. Away from these walls, the count is polynomial in the $\nu_{ij}$.

\end{example}

With the following example, we demonstrate that the tools developed here can in principle also be used to deduce piecewise polynomial results for counts of curves of higher genus, however, the arguments are more involved. For that reason we restrict ourselves to a case study.

\begin{example}
We compute the number of tropical curves of genus $1$ in $\mathbb{P}^1\times\mathbb{P}^1$ with contact orders $2$ with the $0$-section, $(1,1)$ with the $\infty$-section, and full contact order $\nu_1=\nu_2=(n)$ with both fibers.
A lattice path for this count must have $5$ steps. The first step must be downwards of size $n$. The second step must reach the top point of the middle column of lattice points, because otherwise we could not produce two steps of size one on the upper edge of the rectangle in the recursion. The last step must be a size $n$ step on the right edge of the rectangle and the fourth step must go from the middle column to the top right corner. The third step can have any size $i$ from $1$ to $n-1$, see Figure \ref{fig-genus1}.

The multiplicity of such a path is $n^2\cdot i^2\cdot 2(n-i)$. We have to sum from $i=1$ to $n-1$, obtaining
\begin{equation*}
\sum_{i=1}^{n-1}n^2\cdot i^2\cdot 2(n-i) = 2n^3\cdot \sum_{i=1}^{n-1}i^2-2n^2\cdot \sum_{i=1}^{n-1}i^3 = 2n^3((n-1)\cdot n\cdot (2n-1)\cdot \frac{1}{6}) - 2n^2((n-1)n\cdot \frac{1}{2})^2= \frac{1}{6}(n^6-n^4)
\end{equation*}
which is, as expected, polynomial in $n$. Here, we use the well-known Bernoulli formulas for the sums of powers.

\begin{figure}
\begin{center}

\tikzset{every picture/.style={line width=0.75pt}} 

\begin{tikzpicture}[x=0.75pt,y=0.75pt,yscale=-1,xscale=1]

\draw  [color={rgb, 255:red, 0; green, 0; blue, 0 }  ,draw opacity=1 ][fill={rgb, 255:red, 0; green, 0; blue, 0 }  ,fill opacity=1 ] (378.8,460.36) .. controls (378.81,459.23) and (379.69,458.32) .. (380.77,458.32) .. controls (381.86,458.32) and (382.74,459.24) .. (382.73,460.37) .. controls (382.73,461.49) and (381.85,462.41) .. (380.77,462.4) .. controls (379.68,462.4) and (378.8,461.49) .. (378.8,460.36) -- cycle ;
\draw  [color={rgb, 255:red, 0; green, 0; blue, 0 }  ,draw opacity=1 ][fill={rgb, 255:red, 0; green, 0; blue, 0 }  ,fill opacity=1 ] (398,460.36) .. controls (398.01,459.23) and (398.89,458.32) .. (399.97,458.32) .. controls (401.06,458.32) and (401.94,459.24) .. (401.93,460.37) .. controls (401.93,461.49) and (401.05,462.41) .. (399.97,462.4) .. controls (398.88,462.4) and (398,461.49) .. (398,460.36) -- cycle ;
\draw  [color={rgb, 255:red, 0; green, 0; blue, 0 }  ,draw opacity=1 ][fill={rgb, 255:red, 0; green, 0; blue, 0 }  ,fill opacity=1 ] (378.4,540.36) .. controls (378.41,539.23) and (379.29,538.32) .. (380.37,538.32) .. controls (381.46,538.32) and (382.34,539.24) .. (382.33,540.37) .. controls (382.33,541.49) and (381.45,542.41) .. (380.37,542.4) .. controls (379.28,542.4) and (378.4,541.49) .. (378.4,540.36) -- cycle ;
\draw  [color={rgb, 255:red, 0; green, 0; blue, 0 }  ,draw opacity=1 ][fill={rgb, 255:red, 0; green, 0; blue, 0 }  ,fill opacity=1 ] (397.6,539.96) .. controls (397.61,538.83) and (398.49,537.92) .. (399.57,537.92) .. controls (400.66,537.92) and (401.54,538.84) .. (401.53,539.97) .. controls (401.53,541.09) and (400.65,542.01) .. (399.57,542) .. controls (398.48,542) and (397.6,541.09) .. (397.6,539.96) -- cycle ;
\draw  [color={rgb, 255:red, 0; green, 0; blue, 0 }  ,draw opacity=1 ][fill={rgb, 255:red, 0; green, 0; blue, 0 }  ,fill opacity=1 ] (388.4,460.36) .. controls (388.41,459.23) and (389.29,458.32) .. (390.37,458.32) .. controls (391.46,458.32) and (392.34,459.24) .. (392.33,460.37) .. controls (392.33,461.49) and (391.45,462.41) .. (390.37,462.4) .. controls (389.28,462.4) and (388.4,461.49) .. (388.4,460.36) -- cycle ;
\draw  [color={rgb, 255:red, 0; green, 0; blue, 0 }  ,draw opacity=1 ][fill={rgb, 255:red, 0; green, 0; blue, 0 }  ,fill opacity=1 ] (388.4,509.56) .. controls (388.41,508.43) and (389.29,507.52) .. (390.37,507.52) .. controls (391.46,507.52) and (392.34,508.44) .. (392.33,509.57) .. controls (392.33,510.69) and (391.45,511.61) .. (390.37,511.6) .. controls (389.28,511.6) and (388.4,510.69) .. (388.4,509.56) -- cycle ;
\draw    (380.77,460.36) -- (380.37,540.36) ;
\draw    (399.97,459.96) -- (399.57,539.96) ;
\draw    (380.37,540.36) -- (390.37,460.36) ;
\draw    (390.37,509.56) -- (399.97,460.36) ;
\draw    (390.37,460.36) -- (390.37,509.56) ;

\draw (366.9,494.97) node   [align=left] {\begin{minipage}[lt]{9.66pt}\setlength\topsep{0pt}
$\displaystyle n$
\end{minipage}};
\draw (410.9,493.77) node   [align=left] {\begin{minipage}[lt]{9.66pt}\setlength\topsep{0pt}
$\displaystyle n$
\end{minipage}};
\draw (397.47,473.22) node   [align=left] {\begin{minipage}[lt]{9.66pt}\setlength\topsep{0pt}
$\displaystyle i$
\end{minipage}};

\end{tikzpicture}

\end{center}
\caption{The type of lattice path needed to compute the number of tropical curves of genus $1$ in $\mathbb{P}^1\times\mathbb{P}^1$ with contact orders $2$ with the $0$-section, $(1,1)$ with the $\infty$-section, and full contact order $\nu_1=\nu_2=(n)$ with both fibers.}\label{fig-genus1}
\end{figure}
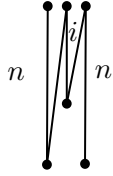

\end{example}

\section{Subfloor diagrams} \label{sec-subfloor}
In this section, we introduce a tool that enables computations by hand more easily compared to the lattice paths. It can be viewed as a combination of floor diagrams with lattice paths. Floor diagrams for plane curve counts have been studied in \cite{BBM14, BM08, FM09}. Floor diagrams in higher dimension have been studied in \cite{BM}. Floor diagrams that determine stationary tropical logarithmic Gromov--Witten invariants of Hirzebruch surfaces with descendants have been developed in  \cite{BGM, CJMR17}. Our new combination of floor diagrams with lattice paths can, besides for concrete computations, also be used to obtain structural results.

\subsection{Counting subfloor diagrams}\label{subsec-countsubfloor}

As before, fix $(\mu,\nu)$ and let $n=\ell(\mu_1)+\ell(\mu_2)+\ell(\nu_1)+\ell(\nu_2)-1$ be the number of point conditions.

Throughout this section, we let $\mu_2=(1,\ldots,1)$, i.e., we consider curves with trivial contact order along one boundary section.

\begin{definition}\label{def-subfloordiagram}
A \emph{subfloor diagram} $F$ of partial degree $\nu$ is a forest on $n$ linearly ordered vertices.
\begin{itemize}
\item Every edge $e$ of a subfloor diagram is equipped with an \emph{expansion factor} $w_e\in \mathbb{N}_{>0}$.
\item Half-edges, resp.\ ends adjacent to only one vertex, are allowed, they have to be oriented to point to the left or to the right. The multiset of expansion factors of the left ends equals $\nu_1$, the multiset of expansion factors of right ends equals $\nu_2$. 
\item Vertices are colored, black or white. The graph is bipartite. Ends have to be adjacent to a black vertex. There are $\ell(\mu_2)$ white vertices.
\end{itemize}
For a vertex $v$ of a subfloor diagram, we define its \emph{divergence} $\di(v)$ to be the sum of the expansion factors of the incoming edges minus the sum of the expansion factors of the outgoing edges.
\begin{itemize}
\item A black vertex must be of valence $2$ and divergence $0$.
\end{itemize}
\end{definition}

\begin{example}\label{ex-subfloordiagrams}
Figure \ref{fig-subfloorex1} shows three subfloor diagrams. The first is of partial degree $((1),(1))$. All expansion factors are one, which is why we do not specify them in the picture. The divergence of the first white vertex is $1$, the divergence of the second white vertex $-1$. The second is of partial degree $((1,1,1),(1,1,1))$. Again all expansion factors are one. The divergence of the first white vertex is $1$, the divergence of the second white vertex $-1$. The third is of partial degree $((2,1),(2,1))$. We specify only expansion factors which are not one. There are four edges with expansion factor $2$. The divergence of the white vertices are $2$, $1$, $-1$, $-2$.
\begin{figure}
\begin{center}
	\includegraphics[height=4cm]{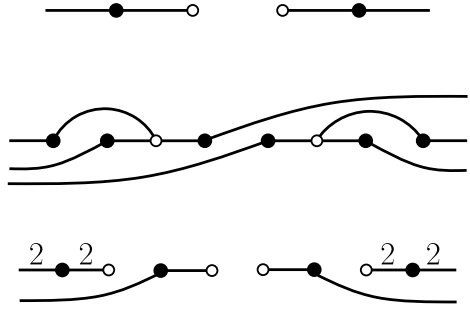}
\caption{Three subfloor diagrams.}\label{fig-subfloorex1}
\end{center}
\end{figure}
\end{example}

\begin{construction}\label{const-mult}
Given a subfloor diagram $F$of partial degree $\nu$, the following algorithm computes its $\mu$-multiplicity:
\begin{enumerate}
\item Choose pairwise disjoint subsets $I_1,\ldots,I_r$ of white vertices such that we obtain a tree when we identify the vertices in each set $I_i$.
\item For each subset $I_i$, the divergences of the contained vertices must be decreasing with the linear order.
\item For each subset $I_i$, the divergences must sum to $0$.
\item The multiset consisting of the sizes of the $I_i$ must equal $\mu_1$.
\item For each $I_i$, we consider the Newton fan $D$ given by $\{ \binom{-\di(v)}{1}\;|\; v\in I_i \}\cup \{\binom{0}{-|I_i|}\}$. 
We let $\mult(I_i)$ to be the multiplicity of the lattice path $\delta_+$ given by the polygon dual to $D$ and the path $\delta_+$ given by $D$ (see Definition \ref{def-multpath}).
\end{enumerate}
We set 
\begin{equation*}
\mult(F)=\Big( \sum_{(I_1,\ldots,I_r)} \prod_i \mult(I_i) \Big) \cdot \prod_e w_e,
\end{equation*}
where the sum goes over all subsets $I_1,\ldots,I_r$ that satisfy the requirements above and the second product goes over all bounded edges $e$ of $F$.
\end{construction}

\begin{example}\label{ex-subfloormult}
Let $(\mu,\nu)= ((2),(1,1),(1),(1))$.
Consider the top subfloor diagram $F_1$ in Figure \ref{fig-subfloorex1}. It is of partial degree $\nu$. To produce a tree, we have to identify both white vertices, i.e., $r=1$ and $I_1$ contains both white vertices. This suits the fact that $\mu_1=(2)$.
Their divergences sum to $0$, and are decreasing (see Example \ref{ex-subfloordiagrams}). We consider the Newton fan $\{\binom{-1}{1},\binom{1}{1},\binom{0}{-2}\}$ and its dual polygon with the path $\delta_+$ (see Figure \ref{fig-pathex2}). This path has multiplicity $2$. Since all expansion factors are $1$, we obtain $\mult(F_1)=2$ in total.
\begin{figure}
\begin{center}
\includegraphics[height=1cm]{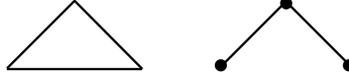}
\caption{The polygon and lattice path $\delta_+$ given by the choice of $I_1$ consisting of all white vertices for the top subfloor diagram in Figure \ref{fig-subfloorex1}.}\label{fig-pathex2}
\end{center}
\end{figure}
In the same way, we obtain, for $(\mu,\nu)=((2),(1,1),(1,1,1),(1,1,1))$,  $\mult(F_2)=2$ for the second subfloor diagram in Figure \ref{fig-subfloorex1}.

Now let $(\mu,\nu)=((4),(1,1,1,1),(2,1),(2,1))$.
Consider the subfloor diagram $F_3$ in Figure \ref{fig-subfloorex1}. It is of partial degree $\nu$. To produce a tree, again we have to identify all white vertices. This suits the fact that $\mu_1=(4)$. Their divergences are decreasing and sum up to $0$ (see Example \ref{ex-subfloordiagrams}). We consider the Newton fan $\{\binom{-2}{1}, \binom{-1}{1}, \binom{1}{1}, \binom{2}{1},\binom{0}{-4}\}$, its dual polygon and the path $\delta_+$ are depicted in Figure \ref{fig-pathex}. In Example \ref{ex-latticepath}, we computed the multiplicity of $\delta_+$ to be $100$. Altogether, the multiplicity of $F$ is $100\cdot 2\cdot 2=400$.
\end{example}

\begin{definition}
We define $N^\floor(\mu,\nu)$ to be the number of subfloor diagrams of partial degree $\nu$, counted with their $\mu$-multiplicity.
\end{definition}

\begin{example}
Let $\nu=((1),(1))$ and $\mu=((2),(1,1))$. Since $\ell(\mu_2)=2$, we have $2$ white vertices, and $n=1+1+1+2-1=4$ vertices altogether. As $\mu_1=(2)$, we know that we need to specify one set $I_1$ of white vertices of size $2$, so all white vertices have to be identified. For the subfloor diagram, they thus have to be in different connected components. As the graph has to be bipartite and ends have to be adjacent to black vertices, and since we need one left and one right end, both with expansion factor $1$, the subfloor diagram on the top of Figure \ref{fig-subfloorex1} is the only one we have to consider. In example \ref{ex-subfloormult}, we computed its $\mu$-multiplicity to be $2$. Altogether, we obtain $N^\floor(\mu,\nu)=2$ for $(\mu,\nu)=((2),(1,1),(1),(1))$.
\end{example}

\begin{example}
In Figure \ref{fig-180}, we compute $N^\floor(\mu,\nu)=180$ for $(\mu,\nu)=((2),(1,1),(2,1),(3))$. We draw three subfloor diagrams. The first picture implies the existence of a second, similar, subfloor diagram, which differs from the picture by the order of the first two points. We refrain from drawing this similar picture. As it has the same multiplicity, we just take the multiplicity of the first picture times $2$. There is only one choice of subset of white vertices to be identified, the corresponding dual lattice polygon in which we perform the lattice path algorithm is sketched below the subfloor diagram. The lattice path algorithm computes $6$---we just cut the triangle which is of normalized area $6$. Combined the weight of the bounded edges, we obtain $6\cdot 3\cdot 2=36$ for the the multiplicity of the first picture.

The computation for the second and third picture is similar. There are three more subfloor diagrams similar to those, which only differ by moving the leftmost point one or two steps to the right. We thus take the multiplicity of both pictures times $3$ each. Altogether, we obtain 
\begin{equation*}
N^\floor(\mu,\nu)=36\cdot 2+ 12\cdot 3 + 24\cdot 3 =180  \hspace{10pt} \text{ for } \hspace{10pt}  (\mu,\nu)=((2),(1,1),(2,1),(3)).
\end{equation*}

\begin{figure}
\begin{center}

\tikzset{every picture/.style={line width=0.75pt}} 

\begin{tikzpicture}[x=0.75pt,y=0.75pt,yscale=-1,xscale=1]

\draw    (451.67,121.04) -- (478.34,121.04) ;
\draw    (523.36,121.37) -- (592,121.37) ;
\draw    (490,170) -- (500,140) ;
\draw  [color={rgb, 255:red, 0; green, 0; blue, 0 }  ,draw opacity=1 ][fill={rgb, 255:red, 255; green, 255; blue, 255 }  ,fill opacity=1 ] (521.4,121.37) .. controls (521.4,120.24) and (522.28,119.33) .. (523.37,119.33) .. controls (524.45,119.33) and (525.33,120.25) .. (525.33,121.37) .. controls (525.32,122.5) and (524.44,123.41) .. (523.36,123.41) .. controls (522.27,123.41) and (521.39,122.49) .. (521.4,121.37) -- cycle ;
\draw  [color={rgb, 255:red, 0; green, 0; blue, 0 }  ,draw opacity=1 ][fill={rgb, 255:red, 255; green, 255; blue, 255 }  ,fill opacity=1 ] (478.34,121.04) .. controls (478.34,119.91) and (479.22,119) .. (480.31,119) .. controls (481.39,119) and (482.27,119.92) .. (482.27,121.04) .. controls (482.27,122.17) and (481.39,123.08) .. (480.3,123.08) .. controls (479.21,123.08) and (478.34,122.16) .. (478.34,121.04) -- cycle ;
\draw    (510,170) -- (500,140) ;
\draw    (489.83,170.18) -- (510,170) ;
\draw  [color={rgb, 255:red, 0; green, 0; blue, 0 }  ,draw opacity=1 ][fill={rgb, 255:red, 0; green, 0; blue, 0 }  ,fill opacity=1 ] (449.7,121.03) .. controls (449.71,119.9) and (450.59,118.99) .. (451.67,118.99) .. controls (452.76,119) and (453.64,119.91) .. (453.63,121.04) .. controls (453.63,122.17) and (452.75,123.08) .. (451.67,123.08) .. controls (450.58,123.08) and (449.7,122.16) .. (449.7,121.03) -- cycle ;
\draw  [color={rgb, 255:red, 0; green, 0; blue, 0 }  ,draw opacity=1 ][fill={rgb, 255:red, 0; green, 0; blue, 0 }  ,fill opacity=1 ] (548.73,121.71) .. controls (548.73,120.58) and (549.62,119.67) .. (550.7,119.67) .. controls (551.79,119.68) and (552.66,120.59) .. (552.66,121.72) .. controls (552.66,122.85) and (551.78,123.76) .. (550.69,123.76) .. controls (549.61,123.76) and (548.73,122.84) .. (548.73,121.71) -- cycle ;
\draw    (426.34,120.04) .. controls (439,119.7) and (455.33,135.97) .. (480.3,123.08) ;
\draw  [color={rgb, 255:red, 0; green, 0; blue, 0 }  ,draw opacity=1 ][fill={rgb, 255:red, 0; green, 0; blue, 0 }  ,fill opacity=1 ] (424.37,120.03) .. controls (424.37,118.9) and (425.26,117.99) .. (426.34,117.99) .. controls (427.43,118) and (428.3,118.91) .. (428.3,120.04) .. controls (428.3,121.17) and (427.42,122.08) .. (426.33,122.08) .. controls (425.25,122.08) and (424.37,121.16) .. (424.37,120.03) -- cycle ;
\draw    (398,114.3) .. controls (425,107.3) and (422.33,107.97) .. (451.67,121.04) ;
\draw    (398.33,120.97) .. controls (411,120.63) and (416.67,120.97) .. (426.34,120.04) ;
\draw    (453.67,211.04) -- (458.67,211.04) -- (480.34,211.04) ;
\draw    (525.36,211.37) -- (594,211.37) ;
\draw    (490,260) -- (500,250) ;
\draw  [color={rgb, 255:red, 0; green, 0; blue, 0 }  ,draw opacity=1 ][fill={rgb, 255:red, 255; green, 255; blue, 255 }  ,fill opacity=1 ] (523.4,211.37) .. controls (523.4,210.24) and (524.28,209.33) .. (525.37,209.33) .. controls (526.45,209.33) and (527.33,210.25) .. (527.33,211.37) .. controls (527.32,212.5) and (526.44,213.41) .. (525.36,213.41) .. controls (524.27,213.41) and (523.39,212.49) .. (523.4,211.37) -- cycle ;
\draw  [color={rgb, 255:red, 0; green, 0; blue, 0 }  ,draw opacity=1 ][fill={rgb, 255:red, 255; green, 255; blue, 255 }  ,fill opacity=1 ] (480.34,211.04) .. controls (480.34,209.91) and (481.22,209) .. (482.31,209) .. controls (483.39,209) and (484.27,209.92) .. (484.27,211.04) .. controls (484.27,212.17) and (483.39,213.08) .. (482.3,213.08) .. controls (481.21,213.08) and (480.34,212.16) .. (480.34,211.04) -- cycle ;
\draw    (510,260) -- (500,250) ;
\draw    (490,260) -- (510,260) ;
\draw  [color={rgb, 255:red, 0; green, 0; blue, 0 }  ,draw opacity=1 ][fill={rgb, 255:red, 0; green, 0; blue, 0 }  ,fill opacity=1 ] (451.7,211.03) .. controls (451.71,209.9) and (452.59,208.99) .. (453.67,208.99) .. controls (454.76,209) and (455.64,209.91) .. (455.63,211.04) .. controls (455.63,212.17) and (454.75,213.08) .. (453.67,213.08) .. controls (452.58,213.08) and (451.7,212.16) .. (451.7,211.03) -- cycle ;
\draw  [color={rgb, 255:red, 0; green, 0; blue, 0 }  ,draw opacity=1 ][fill={rgb, 255:red, 0; green, 0; blue, 0 }  ,fill opacity=1 ] (550.73,211.71) .. controls (550.73,210.58) and (551.62,209.67) .. (552.7,209.67) .. controls (553.79,209.68) and (554.66,210.59) .. (554.66,211.72) .. controls (554.66,212.85) and (553.78,213.76) .. (552.69,213.76) .. controls (551.61,213.76) and (550.73,212.84) .. (550.73,211.71) -- cycle ;
\draw  [color={rgb, 255:red, 0; green, 0; blue, 0 }  ,draw opacity=1 ][fill={rgb, 255:red, 0; green, 0; blue, 0 }  ,fill opacity=1 ] (426.37,210.03) .. controls (426.37,208.9) and (427.26,207.99) .. (428.34,207.99) .. controls (429.43,208) and (430.3,208.91) .. (430.3,210.04) .. controls (430.3,211.17) and (429.42,212.08) .. (428.33,212.08) .. controls (427.25,212.08) and (426.37,211.16) .. (426.37,210.03) -- cycle ;
\draw    (400,210) .. controls (411,210.67) and (415.33,209.97) .. (428.34,210.04) ;
\draw    (400,220) .. controls (423.67,218.97) and (439.67,216.63) .. (453.67,211.04) ;
\draw    (428.34,210.04) .. controls (451.33,202.3) and (497.33,195.97) .. (523.4,211.37) ;
\draw    (453.67,311.04) -- (458.67,311.04) -- (480.34,311.04) ;
\draw    (525.36,311.37) -- (594,311.37) ;
\draw    (490,360) -- (500,340) ;
\draw  [color={rgb, 255:red, 0; green, 0; blue, 0 }  ,draw opacity=1 ][fill={rgb, 255:red, 255; green, 255; blue, 255 }  ,fill opacity=1 ] (523.4,311.37) .. controls (523.4,310.24) and (524.28,309.33) .. (525.37,309.33) .. controls (526.45,309.33) and (527.33,310.25) .. (527.33,311.37) .. controls (527.32,312.5) and (526.44,313.41) .. (525.36,313.41) .. controls (524.27,313.41) and (523.39,312.49) .. (523.4,311.37) -- cycle ;
\draw  [color={rgb, 255:red, 0; green, 0; blue, 0 }  ,draw opacity=1 ][fill={rgb, 255:red, 255; green, 255; blue, 255 }  ,fill opacity=1 ] (480.34,311.04) .. controls (480.34,309.91) and (481.22,309) .. (482.31,309) .. controls (483.39,309) and (484.27,309.92) .. (484.27,311.04) .. controls (484.27,312.17) and (483.39,313.08) .. (482.3,313.08) .. controls (481.21,313.08) and (480.34,312.16) .. (480.34,311.04) -- cycle ;
\draw    (510,360) -- (500,340) ;
\draw    (490,360) -- (510,360) ;
\draw  [color={rgb, 255:red, 0; green, 0; blue, 0 }  ,draw opacity=1 ][fill={rgb, 255:red, 0; green, 0; blue, 0 }  ,fill opacity=1 ] (451.7,311.03) .. controls (451.71,309.9) and (452.59,308.99) .. (453.67,308.99) .. controls (454.76,309) and (455.64,309.91) .. (455.63,311.04) .. controls (455.63,312.17) and (454.75,313.08) .. (453.67,313.08) .. controls (452.58,313.08) and (451.7,312.16) .. (451.7,311.03) -- cycle ;
\draw  [color={rgb, 255:red, 0; green, 0; blue, 0 }  ,draw opacity=1 ][fill={rgb, 255:red, 0; green, 0; blue, 0 }  ,fill opacity=1 ] (550.73,311.71) .. controls (550.73,310.58) and (551.62,309.67) .. (552.7,309.67) .. controls (553.79,309.68) and (554.66,310.59) .. (554.66,311.72) .. controls (554.66,312.85) and (553.78,313.76) .. (552.69,313.76) .. controls (551.61,313.76) and (550.73,312.84) .. (550.73,311.71) -- cycle ;
\draw  [color={rgb, 255:red, 0; green, 0; blue, 0 }  ,draw opacity=1 ][fill={rgb, 255:red, 0; green, 0; blue, 0 }  ,fill opacity=1 ] (426.37,310.03) .. controls (426.37,308.9) and (427.26,307.99) .. (428.34,307.99) .. controls (429.43,308) and (430.3,308.91) .. (430.3,310.04) .. controls (430.3,311.17) and (429.42,312.08) .. (428.33,312.08) .. controls (427.25,312.08) and (426.37,311.16) .. (426.37,310.03) -- cycle ;
\draw    (400,310) .. controls (411,310.67) and (415.33,309.97) .. (428.34,310.04) ;
\draw    (400,320) .. controls (423.67,318.97) and (439.67,316.63) .. (453.67,311.04) ;
\draw    (428.34,310.04) .. controls (451.33,302.3) and (497.33,295.97) .. (523.4,311.37) ;

\draw (471.38,112.37) node   [align=left] {\begin{minipage}[lt]{14.45pt}\setlength\topsep{0pt}
$\displaystyle 2$
\end{minipage}};
\draw (549,110.5) node   [align=left] {\begin{minipage}[lt]{26.65pt}\setlength\topsep{0pt}
$\displaystyle 3$
\end{minipage}};
\draw (581,110.5) node   [align=left] {\begin{minipage}[lt]{26.65pt}\setlength\topsep{0pt}
$\displaystyle 3$
\end{minipage}};
\draw (425.38,103.37) node   [align=left] {\begin{minipage}[lt]{14.45pt}\setlength\topsep{0pt}
$\displaystyle 2$
\end{minipage}};
\draw (414.04,133.37) node   [align=left] {\begin{minipage}[lt]{14.45pt}\setlength\topsep{0pt}
$\displaystyle 1$
\end{minipage}};
\draw (639,159.5) node   [align=left] {\begin{minipage}[lt]{26.65pt}\setlength\topsep{0pt}
$\displaystyle 36\cdot 2$
\end{minipage}};
\draw (460,143) node   [align=left] {\begin{minipage}[lt]{14.45pt}\setlength\topsep{0pt}
$\displaystyle 1$
\end{minipage}};
\draw (473.64,193.49) node   [align=left] {\begin{minipage}[lt]{14.45pt}\setlength\topsep{0pt}
$\displaystyle 2$
\end{minipage}};
\draw (551,200.5) node   [align=left] {\begin{minipage}[lt]{26.65pt}\setlength\topsep{0pt}
$\displaystyle 3$
\end{minipage}};
\draw (589,200.5) node   [align=left] {\begin{minipage}[lt]{26.65pt}\setlength\topsep{0pt}
$\displaystyle 3$
\end{minipage}};
\draw (427.38,193.37) node   [align=left] {\begin{minipage}[lt]{14.45pt}\setlength\topsep{0pt}
$\displaystyle 2$
\end{minipage}};
\draw (430,233) node   [align=left] {\begin{minipage}[lt]{14.45pt}\setlength\topsep{0pt}
$\displaystyle 1$
\end{minipage}};
\draw (641,249.5) node   [align=left] {\begin{minipage}[lt]{26.65pt}\setlength\topsep{0pt}
$\displaystyle 12\cdot 3$
\end{minipage}};
\draw (470,223) node   [align=left] {\begin{minipage}[lt]{14.45pt}\setlength\topsep{0pt}
$\displaystyle 1$
\end{minipage}};
\draw (539,170.5) node   [align=left] {\begin{minipage}[lt]{26.65pt}\setlength\topsep{0pt}
$\displaystyle 6$
\end{minipage}};
\draw (639,329.5) node   [align=left] {\begin{minipage}[lt]{26.65pt}\setlength\topsep{0pt}
$\displaystyle 24\cdot 3$
\end{minipage}};
\draw (468,327) node   [align=left] {\begin{minipage}[lt]{14.45pt}\setlength\topsep{0pt}
$\displaystyle 2$
\end{minipage}};
\draw (551,300.5) node   [align=left] {\begin{minipage}[lt]{26.65pt}\setlength\topsep{0pt}
$\displaystyle 3$
\end{minipage}};
\draw (589,300.5) node   [align=left] {\begin{minipage}[lt]{26.65pt}\setlength\topsep{0pt}
$\displaystyle 3$
\end{minipage}};
\draw (428,333) node   [align=left] {\begin{minipage}[lt]{14.45pt}\setlength\topsep{0pt}
$\displaystyle 2$
\end{minipage}};
\draw (418,303) node   [align=left] {\begin{minipage}[lt]{14.45pt}\setlength\topsep{0pt}
$\displaystyle 1$
\end{minipage}};
\draw (470,293) node   [align=left] {\begin{minipage}[lt]{14.45pt}\setlength\topsep{0pt}
$\displaystyle 1$
\end{minipage}};
\draw (528,363) node   [align=left] {\begin{minipage}[lt]{14.45pt}\setlength\topsep{0pt}
$\displaystyle 4$
\end{minipage}};
\draw (530,267) node   [align=left] {\begin{minipage}[lt]{14.45pt}\setlength\topsep{0pt}
$\displaystyle 2$
\end{minipage}};

\end{tikzpicture}
\end{center}
\caption{The count of $N^\floor(\mu,\nu)=180$ for $(\mu,\nu)=((2),(1,1),(2,1),(3))$.}\label{fig-180}
\end{figure}
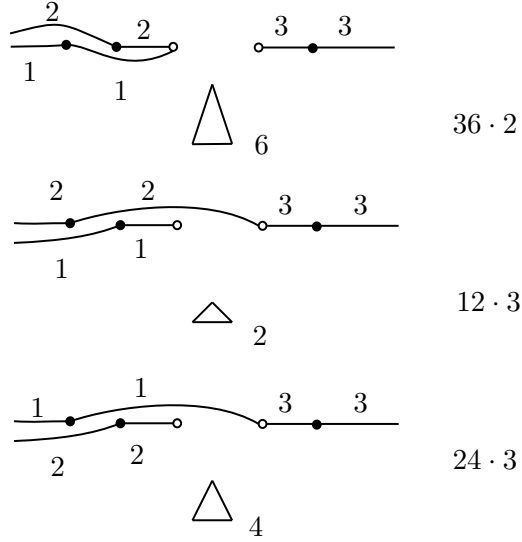
\end{example}

\subsection{From tropical curves to subfloor diagrams}\label{subsec-tropcurvessubfloor}

\begin{definition}[Horizontally stretched point conditions]
We require that the points $p_1,\ldots,p_n$ are \emph{horizontally stretched}. That is, they lie in a small strip $S= (-\epsilon, \epsilon)\times \RR$, with their distances in the $x$-direction very large.
\end{definition}

\begin{definition}
Let $(\Gamma,f)$ be a tropical stable map meeting horizontally stretched point conditions $p_i$. 
An edge of primitive direction $(1,0)$ is called an \emph{elevator}. A connected component of $\Gamma$ minus the elevators is called a \emph{floor} of $(\Gamma,f)$. 

The \emph{size} of a floor is the total weight of its vertical up- resp.\ down-ends.

Let $(\Gamma_1,f |_{\Gamma_1})$ be a floor. A \emph{subfloor} is induced by a connected component of $f(\Gamma_1) \cap S$ and consists of a metric subgraph whose unbounded edges have finite lengths and the map $f$ restricted to this subgraph. A connected component of a floor minus its subfloors is called a \emph{fork}.

\end{definition}

\begin{example}
Figure \ref{fig-exstablemap} shows a tropical stable map to $\PP^1\times\PP^1$ of degree $((2),(1,1),(1,1,1),(1,1,1))$ passing through horizontally stretched point conditions. It contains $7$ floors of which $6$ are of size zero, i.e., just marked points which are contracted. It contains $12$ elevators (on the right and left of each of the size zero floors), two subfloors $s_1$ and $s_2$, one fork $f$. The seventh floor is the union of $s_1$, $s_2$ and $f$.
\end{example}

\begin{figure}
\begin{center}
	\includegraphics[height=5cm]{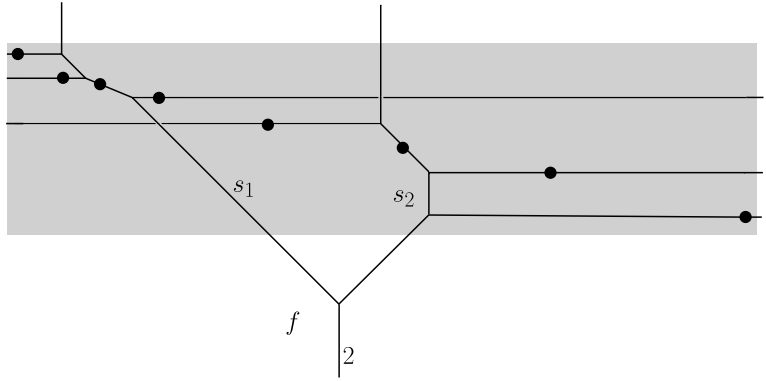}
\caption{A tropical stable map to $\PP^1\times\PP^1$ of degree $((1,1),(1,1),(2),(1,1))$ passing through horizontally stretched point conditions. The strip $S$ is marked in grey.}\label{fig-exstablemap}
\end{center}
\end{figure}

\begin{lemma}
Let $(\Gamma,f)$ be a tropical stable map to $\PP^1\times\PP^1$ of degree $(\mu,\nu)$ satisfying $n=\ell(\mu_1)+\ell(\mu_2)+\ell(\nu_1)+\ell(\nu_2)-1$ horizontally stretched point conditions. Then all elevators are mapped into the strip $S$.
\end{lemma}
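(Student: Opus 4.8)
The statement concerns the bounded horizontal edges, since the horizontal ends run off to infinity and are not at issue. The elevators having primitive direction $(1,0)$, each lies in $S$ precisely when its $\pr_y$-coordinate (its \emph{height}) is confined, while the horizontally stretched points sit at height $\approx 0$ and are spread out, with hugely and rapidly increasing separations, along the elevator direction. Write $g=\pr_y\circ f$, which is constant along every elevator and strictly monotone, with slope the (nonzero) vertical component of the direction, along every non-horizontal edge. At a marked vertex the contracted end forces the two remaining edges to be collinear, so marked vertices are regular points of $g$ unless they happen to sit on an elevator, in which case $g\approx 0$ there. The plan is to show every elevator has height in $(-\epsilon,\epsilon)$ by a floor-decomposition/stretching argument.

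First I would record the genericity constraint that no trivalent vertex $V$ with $\mult(V)=|\det(v_{e_1},v_{e_2})|\ne 0$ is a local extremum of $g$: if $V$ were a local maximum, every edge leaving $V$ would have non-positive vertical component, and the balancing condition $\sum_{e\in\partial V}v(V,e)=0$ would force all three to vanish, i.e.\ all three edges horizontal, whence $\det(v_{e_1},v_{e_2})=0$, a contradiction (and symmetrically for minima). Thus $g$ can exceed the height of any marked vertex only along paths running into the up-ends $\nu_2$, and drop below it only along paths into the down-ends $\nu_1$; in particular any point of $\Gamma$ at height $\ge\epsilon$ is joined to a vertical end by a $g$-increasing path, and the only finite features of $\Gamma$ where $g$ is stationary are the elevators themselves.

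The crux is to use the stretching to pin all elevator heights to $0$. Since there are only finitely many combinatorial types, the realized curve is rigid (as recorded after Definition~\ref{def-tropGWI}), and each vertex position, hence each elevator height, is a fixed affine-linear function of the point coordinates $\pr_x(p_i),\pr_y(p_i)$ with coefficients depending only on the type. I would show that, for the types actually carrying a nonzero count through horizontally stretched points, these height functions do not involve the $\pr_x(p_i)$; otherwise a height would be of the same huge order as the separations. I expect the clean way to see this is the degeneration in which all horizontal gaps tend to infinity: rescaling the horizontal coordinate, every edge whose length diverges must be horizontal, for a diverging non-horizontal edge would carry one of its endpoints to vertical infinity, incompatible with all marked vertices pinning the bounded pieces onto the single line $\pr_y=0$ and with $g$ having no finite interior extrema. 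Hence the long, gap-bridging edges are elevators, the floors localize over the finitely many clusters of $\pr_x$-values, and all elevator heights reduce to affine-linear functions of the $\pr_y(p_i)\approx 0$ alone; so every elevator has height $\approx 0$ and maps into $S$.

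The main obstacle is exactly this last step: converting ``horizontally stretched'' into the rigorous assertion that all gap-bridging edges are horizontal and that \emph{every} elevator (not only the long ones) attaches at height $\approx 0$. I would handle it by combining the finiteness of combinatorial types, which bounds the ratio of horizontal to vertical displacement along any non-horizontal edge, with the rescaling above, so that gaps exceeding this bounded-slope budget force the connecting edges to be horizontal; ruling out a short elevator stranded at large height then amounts to showing that reaching such a feature from the height-$0$ marked points would require a non-horizontal excursion forbidden by the previous paragraph. This is the $\PP^1\times\PP^1$ instance of the Brugall\'e--Mikhalkin floor-decomposition mechanism, and the bookkeeping confining all elevators to $S$ is the delicate part.
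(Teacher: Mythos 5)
Your proposal does not close; it is a plan whose decisive steps are exactly the ones you leave open, as you yourself concede (``the bookkeeping confining all elevators to $S$ is the delicate part''). The concrete gap: you never establish why an elevator cannot sit at a height outside $(-\epsilon,\epsilon)$. Your rescaling argument controls only edges whose length diverges, so it cannot exclude a bounded-length elevator at stranded height reached from the marked points by a chain of \emph{short} non-horizontal edges --- such a vertical excursion is invisible after collapsing the horizontal coordinate --- and your ``no local extrema of $g$'' observation does not forbid it either, since the curve genuinely does have vertices and non-horizontal edges outside $S$ (the forks hang below the strip), so the mere presence of structure beyond height $\epsilon$ is not contradictory; what must be excluded is specifically a \emph{horizontal} edge out there. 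Even your intended endgame falls short of the statement: an elevator height that is a linear function of the $\pr_y(p_i)$ alone, with coefficients depending on the combinatorial type, is only $O(\epsilon)$ (think of $2y_1-y_2$), not a number in $(-\epsilon,\epsilon)$; to land inside $S$ you need the height to \emph{equal} the height of some marked point, which is precisely what you never prove. (A minor additional slip: horizontal ends are elevators too, and the lemma does constrain them --- a horizontal ray lies in the horizontal strip if and only if its height does --- so they are not ``not at issue.'')

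The missing idea is the one that makes the paper's proof two sentences long: apply rigidity to each elevator directly. Since $(\Gamma,f)$ is fixed by the point conditions, the position of every elevator must be pinned, and for a horizontal edge this can only happen through a marked point adjacent to it --- otherwise the elevator's height could be varied in a $1$-parameter family still meeting all the conditions (the paper reuses exactly this observation in the proof of Proposition \ref{prop-tropcurvesubfloor} to exclude elevators joining two white vertices). Since all marked points lie in $S$ and an elevator is horizontal, each elevator sits at the height of its adjacent marked point and hence inside $S$. You invoke rigidity only to obtain (affine-)linearity of positions in the point coordinates and then try to recover the conclusion by asymptotic analysis; the paper uses rigidity to force a marked point onto each elevator, which is what actually pins the height, and which your analytic machinery circles around without ever landing.
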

\begin{proof}
This holds true since $(\Gamma,f)$ is fixed by the point conditions. In particular, the horizontal position of the elevators must be fixed by an adjacent marked point, and all point conditions lie in $S$.
\end{proof}

\begin{lemma}\label{lem-subflooronepoint}
Let $(\Gamma,f)$ be a tropical stable map to $\PP^1\times\PP^1$ of degree $(\mu,\nu)$ satisfying $n=\ell(\mu_1)+\ell(\mu_2)+\ell(\nu_1)+\ell(\nu_2)-1$ horizontally stretched point conditions. Each subfloor contains precisely one marked point.
\end{lemma}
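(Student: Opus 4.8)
The plan is to deduce the statement from the rigidity of $(\Gamma,f)$ (it is fixed by the point conditions, as recorded in \S\ref{subsec-tropGWI}) together with the two features of the setup: the strip $S$ is narrow, and the points $p_1,\dots,p_n$ are spread far apart in the horizontal direction. First I would record that every marked point lies in a subfloor: the $i$-th contracted end is attached at a vertex $V_i$ with $f(V_i)=p_i\in S$, so $f(V_i)\in f(\Gamma)\cap S$ and hence $V_i$ lies in a unique subfloor. It then suffices to prove (a) no subfloor meets two marked points, and (b) every subfloor meets at least one.

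For (a) I would argue that each subfloor has very small horizontal diameter. A subfloor is contained in a floor, and a floor, being a connected component of $\Gamma$ with the elevators removed, contains no edge of primitive direction $(1,0)$; thus every edge of a subfloor has direction $(a,b)$ with $b\neq 0$. By the balancing condition the direction of any edge equals the sum of the end-directions lying on one side of it, so $|a|$ is at most the total horizontal weight $|\mu_1|+|\mu_2|$ of the ends. Since the image of the subfloor lies in $S$, each of its edges has vertical extent at most $2\epsilon$; as $|b|\geq 1$ its length is at most $2\epsilon$, whence its horizontal extent is at most $2\epsilon(|\mu_1|+|\mu_2|)$. Summing over the edges of the subfloor, whose number is bounded in terms of the degree, shows that its horizontal diameter is $O(\epsilon)$ with an implied constant depending only on $(\mu,\nu)$. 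Because the $p_i$ are horizontally stretched, their pairwise horizontal distances exceed this bound, so a subfloor cannot contain two of them.

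For (b) I would use rigidity: if a subfloor $s$ carried no marked point, I claim its horizontal position could be varied, producing a positive-dimensional family of maps through the $p_i$ and contradicting rigidity. The reason no incident edge pins $s$ is that the elevators leaving $s$ are horizontal of freely variable length, while the edges leaving $S$ run into forks that terminate in unbounded vertical ends; neither kind constrains the horizontal position of $s$. Following the tree of subfloors, forks and elevators outward from $s$ to the nearest pinned vertices, one sees that the edge lengths involved satisfy strictly fewer independent equations than there are free lengths, so a one-parameter deformation sliding $s$ horizontally survives. This contradicts rigidity, so each subfloor meets at least one marked point; with (a), exactly one. In particular the subfloors are in bijection with $p_1,\dots,p_n$, so there are exactly $n$ of them.

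I expect the main obstacle to be step (b). Step (a) is soft, using only the balancing bound on edge directions, the absence of horizontal edges within a floor, and the finiteness of the edge set. In (b), by contrast, turning the heuristic ``nothing pins $s$'' into an honest deformation requires care: one must produce an explicit nonzero tangent vector in the space of edge lengths that preserves the combinatorial type (all lengths staying positive, which is where the large horizontal spacing is used) and fixes every $p_i$, and the bookkeeping must be organized over the whole tree of subfloors, forks and elevators — for instance by a global dimension count identifying the number of subfloors with $n$. Handling the configurations where $s$ is an interior node of this tree, rather than a pendant one, is the delicate case.
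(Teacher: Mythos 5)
Your proposal follows the same two-step strategy as the paper's own proof: your step (a) is exactly the paper's uniqueness argument (a path in $\Gamma$ joining two marked points on one subfloor would have to stay in $S$ without using elevators, which horizontal stretching forbids), with the implicit horizontal-diameter bound made explicit, and your step (b) is the paper's existence argument, which appeals to rigidity of $(\Gamma,f)$ under the point conditions to conclude that an unpinned subfloor could move. The approach matches the paper's, and your write-up is if anything more detailed than the published proof, which disposes of step (b) in a single sentence.
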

\begin{proof}
As before, $(\Gamma,f)$ is fixed by the point conditions. Thus the vertical position of the subfloor has to be fixed. This means there has to be at least one point on the subfloor.

Given two points $p_i$ and $p_j$ on $f(\Gamma)$, consider the path in $\Gamma$ whose image under $f$ connects the two points. Then this path must involve at least an elevator or an edge which is not mapped to the strip $S$, since the points are horizontally stretched. Thus there cannot be more than one point on the subfloor.
\end{proof}

\begin{rem1}
	A floor of size zero, i.e., a marked point adjacent to two elevator edges, contains a unique subfloor, the point itself.
	A floor of size bigger zero cannot have a subfloor consisting of a single point.	
\end{rem1}

\begin{lemma}\label{lem-subfloorsize}
Let $(\Gamma,f)$ be a tropical stable map to $\PP^1\times\PP^1$ of degree $(\mu,\nu)$ satisfying $n=\ell(\mu_1)+\ell(\mu_2)+\ell(\nu_1)+\ell(\nu_2)-1$ horizontally stretched point conditions. Each subfloor of $(\Gamma,f)$ which is not just a single point has one edge leaving the top boundary of the strip $S$ and one edge leaving the bottom boundary of $S$.

Denote by $v_1$ the direction of the upper edge and by $v_2$ the direction of the lower edge, oriented  outside of $S$. Then $v_{1y}=-v_{2y}$. 
\end{lemma}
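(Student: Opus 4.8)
The plan is to combine the balancing condition with the rigidity of $(\Gamma,f)$ and the three structural facts already in place: the floor containing the subfloor has no horizontal edges, every elevator is mapped into $S$ (so horizontal flags of the floor sit inside $S$), and each subfloor carries exactly one marked point (Lemma \ref{lem-subflooronepoint}). Fix a non-trivial subfloor $\sigma$. Since $S$ is thin in the vertical direction while every non-horizontal primitive direction has $|v_y|\ge 1$, each bounded edge of $\sigma$ is short and each edge of the floor meeting $\partial S$ crosses it transversally, exiting either through the top or through the bottom.

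First I would show $\sigma$ has \emph{at least} one top exit and at least one bottom exit. The cleanest local input is the marked vertex $V$: it is trivalent with one contracted marked end, and for a non-trivial subfloor its two remaining edges are not elevators (otherwise the $\Gamma$-balancing at $V$ would force both to be horizontal, i.e.\ a size-zero floor), so both are non-horizontal; balancing then makes them antipodal, hence one points strictly up and one strictly down. Globally, viewing the $y$-coordinate as a function on the subtree $\sigma$, a vertex all of whose edges weakly decrease $y$ would violate balancing (there are no horizontal edges to compensate), so $y$ has no interior local maximum; its maximum is therefore attained along an edge leaving the top of $S$, and symmetrically its minimum along an edge leaving the bottom.

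The main obstacle is \emph{uniqueness}: exactly one top exit and exactly one bottom exit. Here I would use that $(\Gamma,f)$ is rigid (fixed by the point conditions) together with the horizontal stretching, arguing by a vertical pinning/deformation count. The point is that $\sigma$ is pinned in $\RR^2$ only through its single marked point, whereas every fork (a component of the floor outside $S$) contains no marked point, all point conditions lying in $S$. A fork attached to the rest of the configuration through a single exit edge could be translated along that edge's direction without moving any marked point, producing a one-parameter family and contradicting rigidity; hence every fork must be bridged between at least two distinct (pinned) subfloors. Feeding this back, a subfloor with two top exits would, after cutting $\Gamma$ along those exits, produce an upper subtree carrying no marked point whose connecting edge length is unconstrained, again yielding a nontrivial deformation. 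Ruling this out carefully, using that the stretched points force the bridging forks to be rigidly pinned between distinct subfloors, is the technical heart of the lemma; it is the vertical analogue of the standard pinning argument in floor-diagram theory (cf.\ \cite{BM08, BM}).

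Finally, granting one top exit $e_{\mathrm{top}}$ (direction $v_1$) and one bottom exit $e_{\mathrm{bot}}$ (direction $v_2$), the identity $v_{1y}=-v_{2y}$ is immediate. Summing the balancing condition over all vertices of $\sigma$, the internal edges cancel and only the directions of the two exit edges, the horizontal elevator flags, and the contracted marked end remain. Taking $y$-components annihilates the elevator flags (horizontal) and the marked end (contracted), leaving $v_{1y}+v_{2y}=0$, that is $v_{1y}=-v_{2y}$.
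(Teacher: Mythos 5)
Your overall skeleton matches the paper's: existence of top and bottom exits from balancing, uniqueness from rigidity, and the identity $v_{1y}=-v_{2y}$ again from balancing. Your existence argument (the two non-contracted edges at the marked vertex are antipodal and non-horizontal, and $y$ has no interior local extremum on the subfloor) is correct, and your final step---summing the balancing condition over all vertices of the subfloor so that only the exit directions, the horizontal elevator flags, and the contracted end survive---is correct and in fact cleaner than the paper's version, which instead follows the path from the bottom exit to the top exit and observes that all edges adjacent to that path are elevators.

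The gap is in uniqueness, exactly where you write ``the technical heart.'' Two of your intermediate claims are false. First, ``a fork attached through a single exit edge could be translated along that edge's direction, producing a one-parameter family'' fails when the fork is a straight ray: then the exit edge is part of an unbounded end of $\Gamma$, and the purported translation is a reparametrization, not a deformation. Consequently ``every fork must be bridged between at least two distinct subfloors'' is false; the weight-$1$ up-ends leaving a subfloor through the top of $S$ are forks attached to a single subfloor. Second, ``cutting $\Gamma$ along two top exits produces an upper subtree carrying no marked point'' is unjustified: the component beyond an exit can contain a fork that re-enters the strip into \emph{another} subfloor, hence it can contain marked points and cannot be translated freely. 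The paper's argument avoids both problems. If a subfloor had two exits on the same side, then by the existence step it would have at least three exits in total; since $\Gamma$ is a tree, distinct exits lead into distinct forks, and since the unique marked vertex of the subfloor (Lemma \ref{lem-subflooronepoint}) carries only two non-contracted edges, it cannot lie on all pairwise paths between three exits (it cannot be the median of the tripod they span), so some pair of exits is joined by a path inside the subfloor missing the marked point. Forks contain no marked points, and each fork contains an unbounded end of $\Gamma$ (its flux through the strip boundary must be balanced by vertical ends), so this path extends to a path between two ends of $\Gamma$ meeting no point conditions. Such a path gives a one-dimensional movement of $(\Gamma,f)$ within its combinatorial type still meeting the point conditions, contradicting rigidity. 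This path-between-two-ends formulation is the key idea your proposal never reaches, and without it the uniqueness step does not close.
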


\noindent The number $v_{1y}$ from Lemma \ref{lem-subfloorsize} is called the \emph{size} of the subfloor. 

\begin{proof}
Since each subfloor is fixed by precisely one point condition by Lemma \ref{lem-subflooronepoint}, there cannot be multiple such edges, otherwise there would be a path connecting two ends of $\Gamma$ that does not meet any point conditions. The latter would lead to a $1$-dimensional movement of $(\Gamma,f)$ within its combinatorial type and still meeting the point conditions, a contradiction to the fact that $(\Gamma,f)$ is fixed by the point conditions. By the balancing condition, we must have at least one such edge.

There is a path connecting the lower and the upper edge. Since all its adjacent edges are elevators of direction $\binom{\pm w}{0}$ for some expansion factor $w$, we have $v_{1y}=-v_{2y}$.
\end{proof}

For the proof of the following lemma, our restriction $\mu_2=(1,\ldots,1)$ requiring the contact orders with one boundary section to be trivial is crucial.

\begin{lemma}\label{lem-onlyendsabovestrip}
Let $(\Gamma,f)$ be a tropical stable map to $\PP^1\times\PP^1$ of degree $(\mu,\nu)$ satisfying $n=\ell(\mu_1)+\ell(\mu_2)+\ell(\nu_1)+\ell(\nu_2)-1$ horizontally stretched point conditions. Then no vertex of $\Gamma$ is mapped above the strip $S$.
More precisely, there are only vertical up-ends above the strip.
\end{lemma}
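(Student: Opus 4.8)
The plan is to reduce the statement to the single assertion that no vertex of $\Gamma$ lies above the strip $S$, and then to extract from an extremal such vertex a \emph{down-right} edge whose existence the hypothesis $\mu_2=(1,\ldots,1)$ forbids. First I would dispose of everything above $S$ that is not an up-end. A left- or right-end has primitive direction $(1,0)$, so it is an elevator, and by the lemma asserting that all elevators map into $S$, no left- or right-end — indeed no horizontal edge at all — meets the region $\{y>\epsilon\}$. A bounded edge both of whose endpoints lie in $\{y\le\epsilon\}$ stays there, so any edge reaching $\{y>\epsilon\}$ either has a vertex above $S$ or is an end; and a down-end points downward, so it would need its attaching vertex above $S$. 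Consequently, once we know that no vertex lies above $S$, the only edges above $S$ are (vertical) up-ends, which gives the ``more precisely'' clause. It therefore remains to prove that no vertex lies above $S$.

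Suppose for contradiction that some vertex lies above $S$, and let $V^*$ be the rightmost such vertex, breaking ties by height. Since there are no horizontal edges above $S$, every edge at $V^*$ is non-horizontal. Any bounded edge at $V^*$ that goes up and to the right would reach a vertex of larger $x$-coordinate still lying above $S$, contradicting maximality; hence every rightward edge at $V^*$ is down-right and reaches a point in or below $S$. By general position $V^*$ is trivalent, and balancing among non-horizontal edges forces at least one up-end $(0,a)$ with $a>0$. If every edge at $V^*$ is vertical, then $V^*$ could be translated vertically without altering the combinatorial type and without meeting any marked point (there are none above $S$), contradicting the rigidity of $(\Gamma,f)$ used in the proof of Lemma~\ref{lem-subflooronepoint}. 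Otherwise $V^*$ carries exactly one up-end and two slanted edges $v_2,v_3$ with $v_2+v_3=(0,-a)$, so $v_{2,x}=-v_{3,x}\neq0$ and exactly one of them is a down-right edge $e_R$.

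The hard part is the final contradiction, which is exactly where $\mu_2=(1,\ldots,1)$ is indispensable. The edge $e_R$ issues from the globally rightmost vertex above $S$; tracing the curve rightward along $e_R$, it must eventually exit through a right-end, and under $\mu_2=(1,\ldots,1)$ every right-end has weight $1$, is horizontal, and lies in $S$. There is thus no way to reconcile a down-right edge originating above $S$ with the weight-one transverse contacts on the right section: turning the descending-right path back would require a horizontal edge above $S$, which is excluded, and it cannot terminate in a right-end of weight exceeding $1$. This is precisely the tropical avatar of Proposition~\ref{prop:noSEstepPositiveMultiplicity}(2), the assertion that under $\mu_2=(1,\ldots,1)$ there are no down-right steps. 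I expect the main obstacle to be making this last step fully rigorous for \emph{horizontally stretched} (rather than Mikhalkin) configurations, i.e.\ establishing the no-down-right principle in the present setting by an induction on the area above the descending-right path that mirrors the proof of Proposition~\ref{prop:noSEstepPositiveMultiplicity}. Granting it, the existence of $e_R$ is absurd, so no vertex lies above $S$, and the lemma follows.
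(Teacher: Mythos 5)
Your reduction of the ``more precisely'' clause to the no-vertex statement is sound, and so is your all-vertical case (vertical translation of $V^*$ contradicting rigidity); both are consistent with the paper. But the proof has a genuine gap exactly at the step you flag as the hard part, and it is not merely technical. A preliminary problem is the extremal choice: taking the \emph{rightmost} vertex above $S$ (ties broken by height) does not exclude bounded \emph{up-left} edges leading to other vertices above $S$ with smaller $x$-coordinate, so your claimed structure at $V^*$ --- exactly one up-end plus two slanted edges --- does not follow; only the existence of some down-right edge $e_R$ survives balancing. The fatal problem is that no contradiction is ever derived from $e_R$. Its far endpoint is simply a vertex lying in or below $S$ (anything else violates your extremality), and from there your argument imposes nothing: it is not true that the curve must ``exit through a right-end'' after $e_R$, and the global principle you invoke --- that $\mu_2=(1,\ldots,1)$ forbids down-right edges --- is false for the curves being counted: down-right edges occur freely below the strip, in the forks (see the curves dual to the paths in Figure \ref{fig-pathex}, or Figure \ref{fig-exstablemap}). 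Proposition \ref{prop:noSEstepPositiveMultiplicity}(2) is a statement about steps of a \emph{lattice path}; a down-right step is dual to a curve edge perpendicular to it, so that proposition does not even speak about down-right edges of the tropical curve and has no ``avatar'' of the kind your plan needs. There is also a conventions error that matters: in this section $\mu_2$ lists the weights of the vertical \emph{up}-ends (this is why there are $\ell(\mu_2)$ white vertices, one per up-end), while the right-ends carry the weights $\nu_2$; so ``every right-end has weight $1$'' is not what the hypothesis $\mu_2=(1,\ldots,1)$ says, and the weight-one constraint you should be exploiting is on the up-ends.

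The repair is to change the extremal choice to the one the paper makes: take the \emph{topmost} vertex $V$ above $S$. Then every upward edge at $V$, whatever its slope, must be an end (there is no vertex above it), hence a vertical up-end, hence of weight $1$ by $\mu_2=(1,\ldots,1)$. If two of the three edges at $V$ pointed strictly downward, balancing would force the remaining edge to have $y$-component at least $2$, i.e.\ to be an up-end of weight at least $2$ --- a contradiction. So at most one edge at $V$ descends; horizontal edges are excluded because elevators lie in $S$; hence at least two of the edges at $V$ are (unmarked) up-ends. A path through $V$ joining two such ends meets no marked point, which yields a one-parameter deformation of $(\Gamma,f)$ through the point conditions and contradicts the rigidity you already used in your all-vertical case. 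This closes the argument without any appeal to a no-down-right-edge principle.
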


\begin{proof}
Assume a vertex is mapped above the strip, and consider one with the biggest $y$-coordinate. If two of the adjacent edges have a direction in the open lower half space, then the $y$-coordinate of remaining adjacent edge is bigger than $1$. This edge has to be a vertical end since there is no vertex above, and thus it is of weight $1$ by our assumption $\mu_2=(1,\ldots,1)$. Thus there can be at most one edge adjacent with a direction in the open lower half space. Adjacent edges of other directions must be ends however, and there are no points outside $S$ to fix positions of edges. Thus we would have a path connecting two ends that does not meet any marked point, leading to a $1$-dimensional movement in contradiction to the fact that $(\Gamma,f)$ is fixed by the point conditions.
We conclude that no vertex is mapped anove the strip, and there are only vertical ends leaving the upper boundary of the strip.
\end{proof}

\begin{corollary} Let $(\Gamma,f)$ be a tropical stable map to $\PP^1\times\PP^1$ of degree $(\mu,\nu)$ satisfying $n=\ell(\mu_1)+\ell(\mu_2)+\ell(\nu_1)+\ell(\nu_2)-1$ horizontally stretched point conditions. Then every subfloor which is not a single point is of size $1$.
\end{corollary}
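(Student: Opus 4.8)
The plan is to read the statement as a direct consequence of Lemmas~\ref{lem-subfloorsize} and~\ref{lem-onlyendsabovestrip}, the only real task being to combine them correctly and to pin down the single place where the hypothesis $\mu_2=(1,\ldots,1)$ enters. Recall from Lemma~\ref{lem-subfloorsize} that for a subfloor which is not a single point there is exactly one edge leaving the top boundary of the strip $S$; writing $v_1$ for its direction oriented out of $S$, the size of the subfloor is by definition the integer $v_{1y}$. So it suffices to show $v_{1y}=1$.

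First I would locate this upper edge inside the ambient curve. By Lemma~\ref{lem-onlyendsabovestrip}, no vertex of $\Gamma$ is mapped above $S$ and the only edges meeting the region above $S$ are vertical up-ends. Since each edge of $\Gamma$ is affine-linear with a constant direction, the unique edge leaving the top of $S$ must coincide above the strip with one of these vertical up-ends; hence it is itself a vertical end and $v_1$ is a positive multiple of $\binom{0}{1}$.

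Next I would read off its expansion factor. As was already used in the proof of Lemma~\ref{lem-onlyendsabovestrip}, a vertical up-end of $(\Gamma,f)$ has expansion factor $1$ precisely because $\mu_2=(1,\ldots,1)$. Hence the upper edge has expansion factor $1$, so $v_1=\binom{0}{1}$ and therefore $v_{1y}=1$. Since $v_{1y}$ is by definition the size of the subfloor, the subfloor has size $1$, as claimed.

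The statement is genuinely a corollary: all the work is already carried out in the two preceding lemmas, and the only substantive input is the hypothesis $\mu_2=(1,\ldots,1)$, which is exactly what forces vertical ends to have expansion factor $1$. The one point I would be slightly careful about is the claim that the upper edge is a single straight up-end, rather than a bounded edge that bends into an end at a vertex sitting on the boundary of $S$; this is handled by choosing the boundary of the auxiliary strip generically so that no vertex lies on it, after which Lemma~\ref{lem-onlyendsabovestrip} applies verbatim. I do not expect any serious obstacle beyond this bookkeeping.
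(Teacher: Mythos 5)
Your proof is correct and follows essentially the same route as the paper: combine Lemma~\ref{lem-subfloorsize} (the upper edge leaving the strip determines the size) with Lemma~\ref{lem-onlyendsabovestrip} (the only edges above the strip are vertical up-ends, of weight $1$ by the hypothesis $\mu_2=(1,\ldots,1)$). Your extra remark about placing the strip boundary generically so no vertex lies on it is a reasonable piece of bookkeeping that the paper leaves implicit, but it does not change the argument.
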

\begin{proof}
By Lemma \ref{lem-subfloorsize}, every subfloor has an upper and a lower edge leaving the strip, whose direction vectors have the same $y$-coordinate up to sign which equals the size. By Lemma  \ref{lem-onlyendsabovestrip}, the only upper edges that leave the strip are vertical ends, which by our assumption $\mu_2=(1,\ldots,1)$ are of weight $1$. Thus the size is $1$.
\end{proof}

\begin{proposition}\label{prop-tropcurvesubfloor}
Let $(\Gamma,f)$ be a tropical stable map to $\PP^1\times\PP^1$ of degree $(\mu,\nu)$ satisfying $n=\ell(\mu_1)+\ell(\mu_2)+\ell(\nu_1)+\ell(\nu_2)-1$ horizontally stretched point conditions. In the image $f(\Gamma)$, erase the forks. Shrink each subfloor to a point which is black if the subfloor is a single marked point and white otherwise. Then we obtain a subfloor diagram as in Definition \ref{def-subfloordiagram}.
\end{proposition}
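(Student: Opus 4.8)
The plan is to verify, clause by clause, that the object produced by erasing the forks and contracting the subfloors satisfies every requirement of Definition~\ref{def-subfloordiagram}, treating the clauses in roughly increasing order of difficulty. First I would fix the gross combinatorial shape. By Lemma~\ref{lem-subflooronepoint} each subfloor carries exactly one of the $n$ marked points, so the contraction produces exactly $n$ vertices, and the linear order is the order of the (pairwise distinct, since the points are horizontally stretched) $x$-coordinates of these marked points. Because $(\Gamma,f)$ has genus $0$ the graph $\Gamma$ is a tree; contracting the connected subgraphs cut out by the subfloors keeps it a tree, and deleting the forks then produces a forest, as required. The bounded edges that survive are precisely the elevators, whose primitive direction is horizontal and whose expansion factor is the positive integral lattice length, giving the weights $w_e\in\NN_{>0}$.

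Next I would deal with the ends and the colouring. After the forks are erased and the vertical up-ends are absorbed into the colour data, the only unbounded edges of $\Gamma$ that remain are the horizontal ends dual to $\nu$; orienting them left and right recovers the prescribed multisets $\nu_1$ and $\nu_2$ of expansion factors directly from the degree of $(\Gamma,f)$. A vertex is coloured black exactly when its subfloor is a single marked point and white otherwise. By Lemma~\ref{lem-subfloorsize} every non-point subfloor has a single edge leaving the top of the strip, which by Lemma~\ref{lem-onlyendsabovestrip} and the Corollary following it is a vertical up-end of size $1$; this sets up a bijection between the white vertices and these up-ends, whose number is $\ell(\mu_2)$. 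Hence there are exactly $\ell(\mu_2)$ white vertices.

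The remaining three clauses are the substantive ones: that black vertices have valence $2$ and divergence $0$, that the colouring is a bipartition, and that every end is incident to a black vertex. The first is immediate: a black vertex is a contracted marked point, hence a trivalent vertex of $\Gamma$ whose two non-contracted flags are, by the marked-point balancing, collinear and of equal weight; discarding the contracted end leaves valence $2$ with equal in- and out-weights, i.e.\ $\di=0$. For bipartiteness and the incidence of ends I would show that a white (non-point) subfloor is joined, within the strip, only to single-point (black) subfloors and never carries a $\nu$-end, and that no elevator joins two single-point floors. The mechanism is rigidity: since $(\Gamma,f)$ is fixed by the point conditions and each subfloor contains a single marked point, the subtree hanging below a white subfloor (its fork down to the $\mu_1$-ends) together with its unique up-end is pinned solely through that one marked point. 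Any horizontal edge or end attached to it other than through an intervening single-point floor would, after cutting that edge, leave a rational subconfiguration with fewer marked points than bounded-edge parameters, producing a positive-dimensional deformation still meeting the points, which contradicts rigidity. The same cutting argument excludes a black--black elevator.

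I expect this rigidity step to be the main obstacle: the work is to convert ``fixed by the point conditions'' into the precise local statement that the single marked point in each subfloor forces all horizontal adjacencies to pass through single-point floors. I would make it rigorous by the local dimension count just sketched, cutting $\Gamma$ at the offending elevator or end and exhibiting a movable piece, exactly mirroring the technique already used in Lemmas~\ref{lem-subflooronepoint}, \ref{lem-subfloorsize}, and \ref{lem-onlyendsabovestrip}. A minor preliminary is to fix conventions so that $\mu_2$ indexes the vertical up-ends while $\nu_1,\nu_2$ index the horizontal left and right ends, matching the left/right ends of the diagram in Definition~\ref{def-subfloordiagram}.
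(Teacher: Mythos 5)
Your clause-by-clause plan coincides with the paper's proof on most points: vertices and their linear order via Lemma~\ref{lem-subflooronepoint}, the forest structure from $\Gamma$ being a tree, the end weights $\nu_1,\nu_2$ read off from the degree, valence $2$ and divergence $0$ at black vertices via balancing at a contracted end, and the count of $\ell(\mu_2)$ white vertices via Lemmas~\ref{lem-subfloorsize} and~\ref{lem-onlyendsabovestrip}. The gap is in the step you yourself identify as the substantive one: excluding white--white elevators, ends at white vertices, and black--black elevators.

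The cut-and-count mechanism does not work as stated. For a rigid trivalent curve through $n$ generic points there are $n$ contracted and $n+1$ non-contracted ends; if you cut \emph{any} bounded edge, a piece with $k_i$ marked points and $m_i$ non-contracted ends (counting the dangling stub) has deformation dimension at least $m_i-k_i-1$, and these quantities sum over the two pieces to $(n+3)-n-2=1$. So one of the two pieces \emph{always} admits a positive-dimensional family of deformations with free stub, whether the edge you cut is an allowed black--white elevator or a forbidden one. Movability of a cut piece therefore cannot contradict rigidity of $(\Gamma,f)$: the gluing condition across the cut can, and in the allowed cases does, absorb exactly this freedom. What must be exhibited is a deformation of the \emph{glued} curve, and this is what the paper does: an elevator joining two white subfloors, or a horizontal end attached to a white subfloor, has a height that no point condition fixes (each white subfloor is pinned only through its single marked point), so it can slide, contradicting that $(\Gamma,f)$ is fixed by the point conditions. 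Your argument for these two cases points at the right phenomenon but needs to be replaced by this whole-curve deformation.

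The claim that ``the same cutting argument excludes a black--black elevator'' is wrong even in spirit. Both endpoints of such an elevator are pinned at marked points, so there is no sliding deformation to be found; indeed a configuration containing a black--black elevator can be perfectly rigid --- for instance a horizontal chain of elevators through two marked points, crossed at a $4$-valent vertex by a vertical floor carrying a third marked point, is completely determined by the points it passes through. What rules such configurations out is not rigidity but genericity: an elevator is horizontal, so a black--black elevator forces the two marked points it joins to have equal height ($y$-coordinate), which horizontally stretched points never do. This is exactly the paper's one-line argument for that case, and some appeal to the generic position of the points (rather than to rigidity of the map) is unavoidable there.
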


\begin{proof}
Since each subfloor contains precisely one marked point by Lemma \ref{lem-subflooronepoint}, the set of vertices can be identified with the set of point conditions, which is linearly ordered by our choice of horizontally stretched conditions. As $\Gamma$ is a tree, erasing forks and shrinking subfloors to points produces a forest on this set of vertices. The edges are induced by the elevators, which come with expansion factors. The expansion factors of the left ends are given by $\nu_1$, the ones of the right by $\nu_2$ by our requirement on the degree resp.\ Newton fan of the tropical stable map. An elevator cannot connect two subfloors of size zero, because of the generic position of the points. Thus, there are no edges between two black vertices. Also, an elevator cannot connect to white vertices, i.e., two subfloors of size bigger $0$, since the horizontal position of this elevator would not be fixed then. Thus, we have a bipartite graph. Since the horizontal position of the ends has to be fixed, too, ends have to be adjacent to black vertices. Since a subfloor of size $0$ is adjacent to two elevators of the same weight, the requirement on the valency and divergence of black vertices is satisfied. As each subfloor of size bigger zero contains precisely upwards vertical end, there must be $\ell(\mu_2)$ white vertices. It follows that we obtain a subfloor diagram from $(\Gamma,f)$.
\end{proof}

\begin{theorem}\label{thm-floor=trop}
The tropical count of rational curves in $\PP^1\times\PP^1$ with contact orders $\mu,\nu$ with the toric boundary and satisfying point conditions can be computed using subfloor diagrams:
$N^\floor(\mu,\nu)=N^\trop(\mu,\nu)$.
\end{theorem}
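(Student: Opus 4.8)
The plan is to set up a multiplicity-preserving correspondence between tropical stable maps through horizontally stretched point conditions and subfloor diagrams decorated with the combinatorial data of Construction \ref{const-mult}, and then to compare weighted counts. First I would use position independence: by the correspondence theorem (Theorem \ref{thm-corres}) together with the fact that $N^\trop(\mu,\nu)$ is independent of the location of the point conditions \cite{GM07c}, we may compute $N^\trop(\mu,\nu)$ using horizontally stretched points. This is precisely what makes the structural lemmas of this subsection applicable. Proposition \ref{prop-tropcurvesubfloor} then provides a forward map $\Phi$ sending each such tropical stable map $(\Gamma,f)$ to a subfloor diagram, and the preceding lemmas guarantee its output is a genuine subfloor diagram in the sense of Definition \ref{def-subfloordiagram}.

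Next I would analyze the fibers of $\Phi$. Fixing a subfloor diagram $F$, a tropical stable map with $\Phi(\Gamma,f)=F$ is reconstructed from (i) a prescription of how the subfloors assemble into floors, and (ii) a tropical realization of the fork attached to each floor. I claim the admissible prescriptions in (i) are exactly the tuples $(I_1,\ldots,I_r)$ of Construction \ref{const-mult}: identifying the vertices of each $I_i$ must yield a tree because the corresponding subfloors lie on a common floor; the divergences within an $I_i$ must sum to zero by balancing along the floor and must decrease with the linear order because horizontal stretching threads the elevators in order of height; and the multiset of sizes $|I_i|$ must equal $\mu_1$ since a floor of size $|I_i|$ terminates below the strip in a single $\mu_1$-end of that weight (using Lemma \ref{lem-onlyendsabovestrip} and its corollary, where the hypothesis $\mu_2=(1,\ldots,1)$ enters).

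The core step is the multiplicativity of the tropical multiplicity. I would show that $m_{(\Gamma,f)}=\prod_V\mult(V)$ factors into a contribution from vertices internal to the floors and a contribution from vertices where elevators attach. Each attaching vertex contributes a determinant factor equal to the elevator weight $w_e$, collecting into $\prod_e w_e$; the vertices interior to a floor $I_i$ contribute the multiplicity of the plane tropical curve realizing its fork. Since each subfloor is pinned at a height determined by its unique marked point (Lemma \ref{lem-subflooronepoint}), and these heights sit in Mikhalkin position along the floor, this within-floor count is computed by the lattice path algorithm (Theorem \ref{thm-latticepaths}) for the Newton fan $D=\{\binom{-\di(v)}{1}:v\in I_i\}\cup\{\binom{0}{-|I_i|}\}$, the pinned heights selecting the boundary path $\delta_+$; thus it equals $\mult(I_i)$. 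Assembling, the weighted fiber over $F$ is
\begin{equation*}
\sum_{\Phi(\Gamma,f)=F} m_{(\Gamma,f)}=\Big(\sum_{(I_1,\ldots,I_r)}\prod_i\mult(I_i)\Big)\cdot\prod_e w_e=\mult(F),
\end{equation*}
and summing over all $F$ gives $N^\trop(\mu,\nu)=N^\floor(\mu,\nu)$. Matching the factor $\tfrac{1}{\mathrm{Aut}(f)}$ against the sum over orderings is routine given the linear order on the vertices.

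The hard part will be the multiplicativity: rigorously verifying that the product of vertex determinants factors with no cross-terms into the elevator weights and the within-floor multiplicities, and in particular that the single boundary path $\delta_+$ --- rather than the full lattice-path sum of Theorem \ref{thm-latticepaths} --- enumerates the fork realizations of each floor. This is exactly where the hypothesis $\mu_2=(1,\ldots,1)$ and the pinning of subfloor heights by the point conditions are essential.
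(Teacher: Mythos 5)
Your proposal is correct and follows essentially the same route as the paper's own proof: the forward map from Proposition \ref{prop-tropcurvesubfloor}, reconstruction of the unique partial tropical stable map inside the strip from a given diagram $F$, and the identification of the weighted fork count with the multiplicity of the single path $\delta_+$ via Theorem \ref{thm-latticepaths} together with independence of the point positions \cite{GM07c}. Your fiber analysis (floor assemblies $(I_1,\ldots,I_r)$ plus fork realizations) and the factorization of the vertex multiplicities into elevator weights $\prod_e w_e$ times the within-floor lattice-path multiplicities simply make explicit the steps the paper's terser argument leaves implicit.
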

\begin{proof}
Given a tropical stable map contributing to $N^\trop(\mu,\nu)$, we obtain a subfloor diagram from it using Proposition \ref{prop-tropcurvesubfloor}. Given a subfloor diagram $F$, it remains to show that the sum of the multicities of all tropical stable maps that produce $F$ using the construction of Proposition \ref{prop-tropcurvesubfloor} equals $\mult(F)$.
Given $F$, we can draw elevator edges adjacent to the points $p_i$ that correspond to the black points in the subfloor diagram. For each white point, there is a unique way to draw a subfloor passing through $p_i$ and adjacent to the elevators as imposed by the diagram. Thus, given a subfloor diagram $F$, we can produce a unique partial tropical stable map, namely the part whose image is contained in the strip $S$, from it such that each tropical stable map that contains the partial one yields $F$ under the construction from Proposition \ref{prop-tropcurvesubfloor}. We still need to attach suitable forks to produce a tropical stable map. It follows from Theorem \ref{thm-latticepaths} that the weighted number of forks we can attach equals the multiplicity of the lattice paths $\delta_+$ and their induced dual subdivisions as we build them in Construction \ref{const-mult}. This is true since the number of tropical stable maps does not depend on the precise location of the points (see, e.g., \cite{GM07c}), so we can assume that the edges dual to $\delta_+$ are fixed by points in Mikhalkin's position.
\end{proof}

\begin{example}
The unique partial tropical stable map we can construct from the third subfloor diagram in Figure \ref{fig-subfloorex1} is depicted in Figure \ref{fig-exstablemap2}. Its vertices contribute a factor of $4$ to the multiplicity. We can attach each of the forks depicted in Figure \ref{fig-pathex} to produce a tropical stable map of degree $((4),(1,1,1,1),(2,1),(2,1))$ from it (see Example \ref{ex-latticepath}). Thus there are altogether three tropical stable maps that produce $F$ when applying the construction of Proposition \ref{prop-tropcurvesubfloor}. The multiplicity of the first is $4\cdot 36$, of the second $4\cdot 24$ and of the third $4\cdot 40$, as the latter factors are the multiplicities of the three forks we attach. Altogether, we obtain a weighted count of $\mult(F)= 400$ tropical stable maps that produce $F$  when applying the construction of Proposition \ref{prop-tropcurvesubfloor}. The forks we can attach do not have to be equal to the forks we obtain from the lattice path algorithm, but their total multiplicity is the same.
\begin{figure}
\begin{center}
\includegraphics[width=0.9\textwidth]{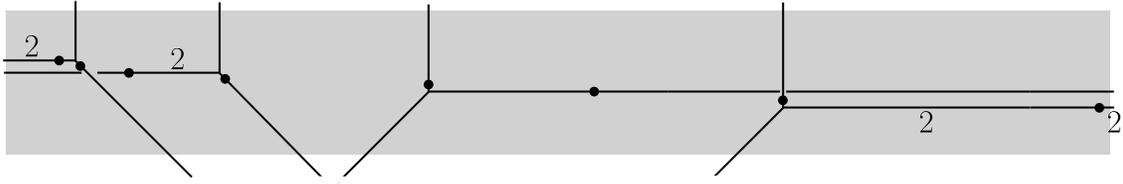}
\caption{The unique partial tropical stable map that produces the third subfloor diagram in Figure \ref{fig-subfloorex1} using the construction of Proposition \ref{prop-tropcurvesubfloor}.}\label{fig-exstablemap2}
\end{center}
\end{figure}
\end{example}

\subsection{Piecewise polynomial structure}\label{subsec-ppsubfloor}

The following theorem generalizes the result on piecewise polynomiality of the count $N(\mu,\nu)$ where $\mu=((1^a),(1^a))$ by Ardila-Brugall\'e \cite{AB17}. They showed, using floor diagrams, that this count is piecewise polynomial in the entries of the partitions $\nu$.

\begin{theorem}\label{maintheorem2}
Let $\mu_1=(2,\ldots,2,1\ldots,1)$ and $\mu_2=(1,\ldots,1)$ be partitions of a fixed number $a$. That is, we fix both tangent and trivial contact order with the $0$-section and only trivial contact order with the $\infty$-section. Let $\nu=(\nu_1,\nu_2)$ be a set of two partitions of fixed lengths of the same number whose entries we view as variables. Then the function sending $\nu$ to $N(\mu,\nu)$ is piecewise polynomial in the entries of the $\nu_i$.

\end{theorem}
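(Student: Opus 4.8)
The plan is to compute $N(\mu,\nu)$ entirely combinatorially through subfloor diagrams and then read off piecewise polynomiality from the explicit shape of the multiplicity. By Theorem~\ref{thm-floor=trop} together with the Correspondence Theorem~\ref{thm-corres}, we have $N(\mu,\nu)=N^\floor(\mu,\nu)=\sum_F \mult(F)$, the sum running over subfloor diagrams $F$ of partial degree $\nu$. Expanding the multiplicity from Construction~\ref{const-mult} and interchanging the two sums, I would rewrite this as
\begin{equation*}
N(\mu,\nu)=\sum_{(F,\{I_i\})}\ \Big(\prod_i \mult(I_i)\Big)\cdot \prod_e w_e,
\end{equation*}
where the outer sum is over pairs consisting of a subfloor diagram $F$ and a partition $\{I_i\}$ of its white vertices into blocks that is valid in the sense of Construction~\ref{const-mult}. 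The first reduction is bookkeeping: the underlying \emph{combinatorial type} of such a pair---the forest topology, the black/white coloring, the assignment of the ends to vertices, and the block partition $\{I_i\}$---is discrete data, and since the lengths $\ell(\mu_1),\ell(\mu_2),\ell(\nu_1),\ell(\nu_2)$ are fixed there are only finitely many such types, independently of the numerical values of the entries of $\nu$.

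The key step is to show that, once a combinatorial type is fixed, all bounded edge weights $w_e$ and all divergences $\di(v)$ are determined as integer-linear functions of the entries of $\nu$. Here I would exploit the defining condition (Construction~\ref{const-mult}(1)) that contracting each block $I_i$ to a point turns $F$ into a tree $T$: this forces each block to meet a given connected component of $F$ in at most one vertex, so no edge of $F$ collapses to a loop and the edges of $F$ are in bijection with the edges of $T$. On $T$ the horizontal flow is governed by a transportation problem whose only sources and sinks are the ends, with demands given by the entries of $\nu_1$ and $\nu_2$, while every black vertex and every contracted block is flow-neutral (black vertices have divergence $0$ by Definition~\ref{def-subfloordiagram}, and each block has total divergence $0$ by Construction~\ref{const-mult}(3)). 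Since the total demand $|\nu_1|-|\nu_2|$ vanishes and $T$ is a tree, the flow on each edge equals the net demand on one side of that edge and is therefore uniquely determined; pulling back along the contraction, each $w_e$ is a signed subset-sum of the entries of $\nu$, and each individual $\di(v)$ is a difference of such sums. In particular each combinatorial type is realized by a unique subfloor diagram $F$ (for generic $\nu$), which dissolves the apparent freedom in the interior edge weights.

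It then remains to make the summands explicit using the hypothesis that $\mu_1$ has all parts at most $2$, so every block $I_i$ has size $1$ or $2$. For a singleton block the condition $\di(v)=0$ produces the degenerate lattice path of multiplicity $1$; for a size-$2$ block $\{v_1,v_2\}$ with $\di(v_1)=-\di(v_2)=:d_i\ge 0$ the associated Newton fan is the triangle $\{\binom{-d_i}{1},\binom{d_i}{1},\binom{0}{-2}\}$, whose path $\delta_+$ is a single trivalent vertex of multiplicity $|\det(\binom{-d_i}{1},\binom{d_i}{1})|=2d_i$ (compare Example~\ref{ex-subfloormult}). Consequently each combinatorial type contributes
\begin{equation*}
\Big(\prod_{i\,:\,|I_i|=2} 2\,d_i\Big)\cdot \prod_e w_e,
\end{equation*}
a product of absolute values of linear forms in the entries of $\nu$, where $d_i=|\di(v)|$ for either vertex $v\in I_i$.

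Finally I would organize $\nu$-space by the hyperplane arrangement whose walls are the vanishing loci $w_e=0$ and the coincidences $\di(v_1)=\di(v_2)$ inside blocks---all of which are subset-sum equalities among the entries of $\nu$. On each open chamber the sign of every linear form is constant, so the positivity requirement $w_e>0$ on expansion factors and the decreasing-order constraint of Construction~\ref{const-mult}(2) select a fixed subcollection of contributing types, and each surviving summand is then a genuine polynomial in the entries of $\nu$. Summing the finitely many polynomial contributions shows that $N(\mu,\nu)$ is polynomial on each chamber, hence piecewise polynomial, which also recovers the Ardila--Brugall\'e case $\mu_1=(1^a)$. The main obstacle I anticipate is the bookkeeping in the second paragraph: carefully verifying that the contraction-to-a-tree condition really forces the edge-to-edge bijection and the unique solvability of the flow, and pinning down exactly which subset-sum hyperplanes bound the chambers.
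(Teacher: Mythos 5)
Your proposal is correct and follows essentially the same route as the paper's own proof: both reduce to subfloor diagrams via Theorem \ref{thm-floor=trop} and the Correspondence Theorem \ref{thm-corres}, observe that the parts of $\mu_1$ being at most $2$ force singleton blocks of divergence $0$ and size-two blocks whose triangle contributes $2\di(v)$, and conclude that edge weights and divergences are subset-sums of entries of $\nu$, with the chambers cut out by the positivity constraints on weights and divergences. Your transportation-problem formulation of the flow on the contracted tree is simply a more formal phrasing of the paper's ``we can compute the weights of its edges from left to right,'' so the two arguments coincide in substance.
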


\begin{proof}
We prove that $N^{\floor}(\mu,\nu)$ is piecewise polynomial. As $N^{\floor}(\mu,\nu)=N^{\trop}(\mu,\nu)$ by Theorem \ref{thm-floor=trop}, and $N^{\trop}(\mu,\nu)=N(\mu,\nu)$ by the Correspondence Theorem \ref{thm-corres}, the result follows.

Consider a subfloor diagram which has $\ell(\nu_1)$ left ends and $\ell(\nu_2)$ right ends. Assume $\mu_1$ has $t$ entries which are $2$. Each entry which is $1$ corresponds to a subset $I_i$ of size $1$ consisting of a white vertex of divergence $0$. For each of the $t$ entries which are $2$, we must have a subset $I_i$ of size $2$ consisting of two white vertices of opposite divergence. A choice of subsets $I_i$ must be such that the identifications of the white vertices in the size-two-subsets yields a tree. Thus the subfloor diagram must have $t+1$ connected components. 
The Newton polygon in which we perform a lattice path count for each of the size two subsets $I_i$ is always a triangle with a base of length $2$, as in Figure \ref{fig-180}. The lattice path recursion cuts the triangle and produces as multiplicity of the count the area of the triangle. The latter equals $2\cdot \di(v)$, where $\di(v)$ is the positive divergence of the white vertices identified via $I_i$.

Each connected component of the subfloor diagram is a tree and contains precisely one white vertex of divergence different from $0$. This is true as it must be connected to the remaining components via the identifications induced by the subsets $I_i$, and as no such identification may produce a cycle. Thus all other vertices in the component are of divergence $0$. We can compute the weights of its edges from left to right. The weights of the left ends are imposed by $\nu_1$, the weights of the right ends by $\nu_2$. The weights of all other edges are sums of entries of the $\nu_i$. Also the only nonzero divergence is a sum of entries of the $\nu_i$.
It follows that the multiplicity of the subfloor diagram is a product of sums of entries of the $\nu_i$, and thus polynomial in the $\nu_i$.

The piecewise polynomial structure appears as weights of edges have to be positive and the divergence of the left of the two identified white vertices for each size two subset $I_i$ has to be positive. Thus it depends on the inequalities that are satisfied among the various sums of entries of $\nu_i$ which subfloor diagrams appear and produce a nonzero multiplicity and which do not.
\end{proof}

\subsection{Examples}\label{subsec-subfloorex}
We end by providing examples for the piecewise polynomial counts in Theorem \ref{maintheorem2}.
\begin{example}
In Figure \ref{fig-piecewise}, we compute the polynomial $N((2),(1,1),(n_1,n_2),(n))$ using subfloor diagrams. The subfloor diagrams which appear are similar to the ones appearing in the concrete count of $N((2),(1,1),(2,1),(3))$ in Figure \ref{fig-180}. In this case, we obtain a polynomial, as $n=n_1+n_2$ always has to be bigger than the individual entries of $\nu_1$.
We obtain
\begin{displaymath}
N((2),(1,1),(n_1,n_2),(n))= 10 n^2 n_1 n_2.
\end{displaymath}

\begin{figure}
\begin{center}

\tikzset{every picture/.style={line width=0.75pt}} 

\begin{tikzpicture}[x=0.75pt,y=0.75pt,yscale=-1,xscale=1]

\draw    (451.67,121.04) -- (478.34,121.04) ;
\draw    (523.36,121.37) -- (592,121.37) ;
\draw    (490,170) -- (500,140) ;
\draw  [color={rgb, 255:red, 0; green, 0; blue, 0 }  ,draw opacity=1 ][fill={rgb, 255:red, 255; green, 255; blue, 255 }  ,fill opacity=1 ] (521.4,121.37) .. controls (521.4,120.24) and (522.28,119.33) .. (523.37,119.33) .. controls (524.45,119.33) and (525.33,120.25) .. (525.33,121.37) .. controls (525.32,122.5) and (524.44,123.41) .. (523.36,123.41) .. controls (522.27,123.41) and (521.39,122.49) .. (521.4,121.37) -- cycle ;
\draw  [color={rgb, 255:red, 0; green, 0; blue, 0 }  ,draw opacity=1 ][fill={rgb, 255:red, 255; green, 255; blue, 255 }  ,fill opacity=1 ] (478.34,121.04) .. controls (478.34,119.91) and (479.22,119) .. (480.31,119) .. controls (481.39,119) and (482.27,119.92) .. (482.27,121.04) .. controls (482.27,122.17) and (481.39,123.08) .. (480.3,123.08) .. controls (479.21,123.08) and (478.34,122.16) .. (478.34,121.04) -- cycle ;
\draw    (510,170) -- (500,140) ;
\draw    (489.83,170.18) -- (510,170) ;
\draw  [color={rgb, 255:red, 0; green, 0; blue, 0 }  ,draw opacity=1 ][fill={rgb, 255:red, 0; green, 0; blue, 0 }  ,fill opacity=1 ] (449.7,121.03) .. controls (449.71,119.9) and (450.59,118.99) .. (451.67,118.99) .. controls (452.76,119) and (453.64,119.91) .. (453.63,121.04) .. controls (453.63,122.17) and (452.75,123.08) .. (451.67,123.08) .. controls (450.58,123.08) and (449.7,122.16) .. (449.7,121.03) -- cycle ;
\draw  [color={rgb, 255:red, 0; green, 0; blue, 0 }  ,draw opacity=1 ][fill={rgb, 255:red, 0; green, 0; blue, 0 }  ,fill opacity=1 ] (548.73,121.71) .. controls (548.73,120.58) and (549.62,119.67) .. (550.7,119.67) .. controls (551.79,119.68) and (552.66,120.59) .. (552.66,121.72) .. controls (552.66,122.85) and (551.78,123.76) .. (550.69,123.76) .. controls (549.61,123.76) and (548.73,122.84) .. (548.73,121.71) -- cycle ;
\draw    (426.34,120.04) .. controls (439,119.7) and (455.33,135.97) .. (480.3,123.08) ;
\draw  [color={rgb, 255:red, 0; green, 0; blue, 0 }  ,draw opacity=1 ][fill={rgb, 255:red, 0; green, 0; blue, 0 }  ,fill opacity=1 ] (424.37,120.03) .. controls (424.37,118.9) and (425.26,117.99) .. (426.34,117.99) .. controls (427.43,118) and (428.3,118.91) .. (428.3,120.04) .. controls (428.3,121.17) and (427.42,122.08) .. (426.33,122.08) .. controls (425.25,122.08) and (424.37,121.16) .. (424.37,120.03) -- cycle ;
\draw    (398,114.3) .. controls (425,107.3) and (422.33,107.97) .. (451.67,121.04) ;
\draw    (398.33,120.97) .. controls (411,120.63) and (416.67,120.97) .. (426.34,120.04) ;
\draw    (453.67,211.04) -- (458.67,211.04) -- (480.34,211.04) ;
\draw    (525.36,211.37) -- (594,211.37) ;
\draw    (490,260) -- (500,250) ;
\draw  [color={rgb, 255:red, 0; green, 0; blue, 0 }  ,draw opacity=1 ][fill={rgb, 255:red, 255; green, 255; blue, 255 }  ,fill opacity=1 ] (523.4,211.37) .. controls (523.4,210.24) and (524.28,209.33) .. (525.37,209.33) .. controls (526.45,209.33) and (527.33,210.25) .. (527.33,211.37) .. controls (527.32,212.5) and (526.44,213.41) .. (525.36,213.41) .. controls (524.27,213.41) and (523.39,212.49) .. (523.4,211.37) -- cycle ;
\draw  [color={rgb, 255:red, 0; green, 0; blue, 0 }  ,draw opacity=1 ][fill={rgb, 255:red, 255; green, 255; blue, 255 }  ,fill opacity=1 ] (480.34,211.04) .. controls (480.34,209.91) and (481.22,209) .. (482.31,209) .. controls (483.39,209) and (484.27,209.92) .. (484.27,211.04) .. controls (484.27,212.17) and (483.39,213.08) .. (482.3,213.08) .. controls (481.21,213.08) and (480.34,212.16) .. (480.34,211.04) -- cycle ;
\draw    (510,260) -- (500,250) ;
\draw    (490,260) -- (510,260) ;
\draw  [color={rgb, 255:red, 0; green, 0; blue, 0 }  ,draw opacity=1 ][fill={rgb, 255:red, 0; green, 0; blue, 0 }  ,fill opacity=1 ] (451.7,211.03) .. controls (451.71,209.9) and (452.59,208.99) .. (453.67,208.99) .. controls (454.76,209) and (455.64,209.91) .. (455.63,211.04) .. controls (455.63,212.17) and (454.75,213.08) .. (453.67,213.08) .. controls (452.58,213.08) and (451.7,212.16) .. (451.7,211.03) -- cycle ;
\draw  [color={rgb, 255:red, 0; green, 0; blue, 0 }  ,draw opacity=1 ][fill={rgb, 255:red, 0; green, 0; blue, 0 }  ,fill opacity=1 ] (550.73,211.71) .. controls (550.73,210.58) and (551.62,209.67) .. (552.7,209.67) .. controls (553.79,209.68) and (554.66,210.59) .. (554.66,211.72) .. controls (554.66,212.85) and (553.78,213.76) .. (552.69,213.76) .. controls (551.61,213.76) and (550.73,212.84) .. (550.73,211.71) -- cycle ;
\draw  [color={rgb, 255:red, 0; green, 0; blue, 0 }  ,draw opacity=1 ][fill={rgb, 255:red, 0; green, 0; blue, 0 }  ,fill opacity=1 ] (426.37,210.03) .. controls (426.37,208.9) and (427.26,207.99) .. (428.34,207.99) .. controls (429.43,208) and (430.3,208.91) .. (430.3,210.04) .. controls (430.3,211.17) and (429.42,212.08) .. (428.33,212.08) .. controls (427.25,212.08) and (426.37,211.16) .. (426.37,210.03) -- cycle ;
\draw    (400,210) .. controls (411,210.67) and (415.33,209.97) .. (428.34,210.04) ;
\draw    (400,220) .. controls (423.67,218.97) and (439.67,216.63) .. (453.67,211.04) ;
\draw    (428.34,210.04) .. controls (451.33,202.3) and (497.33,195.97) .. (523.4,211.37) ;
\draw    (453.67,311.04) -- (458.67,311.04) -- (480.34,311.04) ;
\draw    (525.36,311.37) -- (594,311.37) ;
\draw    (490,360) -- (500,340) ;
\draw  [color={rgb, 255:red, 0; green, 0; blue, 0 }  ,draw opacity=1 ][fill={rgb, 255:red, 255; green, 255; blue, 255 }  ,fill opacity=1 ] (523.4,311.37) .. controls (523.4,310.24) and (524.28,309.33) .. (525.37,309.33) .. controls (526.45,309.33) and (527.33,310.25) .. (527.33,311.37) .. controls (527.32,312.5) and (526.44,313.41) .. (525.36,313.41) .. controls (524.27,313.41) and (523.39,312.49) .. (523.4,311.37) -- cycle ;
\draw  [color={rgb, 255:red, 0; green, 0; blue, 0 }  ,draw opacity=1 ][fill={rgb, 255:red, 255; green, 255; blue, 255 }  ,fill opacity=1 ] (480.34,311.04) .. controls (480.34,309.91) and (481.22,309) .. (482.31,309) .. controls (483.39,309) and (484.27,309.92) .. (484.27,311.04) .. controls (484.27,312.17) and (483.39,313.08) .. (482.3,313.08) .. controls (481.21,313.08) and (480.34,312.16) .. (480.34,311.04) -- cycle ;
\draw    (510,360) -- (500,340) ;
\draw    (490,360) -- (510,360) ;
\draw  [color={rgb, 255:red, 0; green, 0; blue, 0 }  ,draw opacity=1 ][fill={rgb, 255:red, 0; green, 0; blue, 0 }  ,fill opacity=1 ] (451.7,311.03) .. controls (451.71,309.9) and (452.59,308.99) .. (453.67,308.99) .. controls (454.76,309) and (455.64,309.91) .. (455.63,311.04) .. controls (455.63,312.17) and (454.75,313.08) .. (453.67,313.08) .. controls (452.58,313.08) and (451.7,312.16) .. (451.7,311.03) -- cycle ;
\draw  [color={rgb, 255:red, 0; green, 0; blue, 0 }  ,draw opacity=1 ][fill={rgb, 255:red, 0; green, 0; blue, 0 }  ,fill opacity=1 ] (550.73,311.71) .. controls (550.73,310.58) and (551.62,309.67) .. (552.7,309.67) .. controls (553.79,309.68) and (554.66,310.59) .. (554.66,311.72) .. controls (554.66,312.85) and (553.78,313.76) .. (552.69,313.76) .. controls (551.61,313.76) and (550.73,312.84) .. (550.73,311.71) -- cycle ;
\draw  [color={rgb, 255:red, 0; green, 0; blue, 0 }  ,draw opacity=1 ][fill={rgb, 255:red, 0; green, 0; blue, 0 }  ,fill opacity=1 ] (426.37,310.03) .. controls (426.37,308.9) and (427.26,307.99) .. (428.34,307.99) .. controls (429.43,308) and (430.3,308.91) .. (430.3,310.04) .. controls (430.3,311.17) and (429.42,312.08) .. (428.33,312.08) .. controls (427.25,312.08) and (426.37,311.16) .. (426.37,310.03) -- cycle ;
\draw    (400,310) .. controls (411,310.67) and (415.33,309.97) .. (428.34,310.04) ;
\draw    (400,320) .. controls (423.67,318.97) and (439.67,316.63) .. (453.67,311.04) ;
\draw    (428.34,310.04) .. controls (451.33,302.3) and (497.33,295.97) .. (523.4,311.37) ;

\draw (470,113) node   [align=left] {\begin{minipage}[lt]{14.45pt}\setlength\topsep{0pt}
$\displaystyle n_{1}{}$
\end{minipage}};
\draw (549,110.5) node   [align=left] {\begin{minipage}[lt]{26.65pt}\setlength\topsep{0pt}
$\displaystyle n$
\end{minipage}};
\draw (581,110.5) node   [align=left] {\begin{minipage}[lt]{26.65pt}\setlength\topsep{0pt}
$\displaystyle n$
\end{minipage}};
\draw (420,97) node   [align=left] {\begin{minipage}[lt]{14.45pt}\setlength\topsep{0pt}
$\displaystyle n_{1}{}$
\end{minipage}};
\draw (408.63,131.85) node   [align=left] {\begin{minipage}[lt]{14.45pt}\setlength\topsep{0pt}
$\displaystyle n_{2}{}$
\end{minipage}};
\draw (706,159.5) node   [align=left] {\begin{minipage}[lt]{128.25pt}\setlength\topsep{0pt}
$\displaystyle n_{1} \cdotp n_{2} \cdot n\cdotp 2( n_{1} +n_{2}) \cdot 2$
\end{minipage}};
\draw (460,143) node   [align=left] {\begin{minipage}[lt]{14.45pt}\setlength\topsep{0pt}
$\displaystyle n_{2}{}$
\end{minipage}};
\draw (551,170.5) node   [align=left] {\begin{minipage}[lt]{53.45pt}\setlength\topsep{0pt}
$\displaystyle 2( n_{1} +n_{2})$
\end{minipage}};
\draw (695.7,249.5) node   [align=left] {\begin{minipage}[lt]{101.05pt}\setlength\topsep{0pt}
$\displaystyle n_{1} \cdot n_{2} \cdotp n\cdotp 2n_{2} \cdot 3$
\end{minipage}};
\draw (530,363) node   [align=left] {\begin{minipage}[lt]{14.45pt}\setlength\topsep{0pt}
$\displaystyle 2n_{1}{}$
\end{minipage}};
\draw (410,197) node   [align=left] {\begin{minipage}[lt]{14.45pt}\setlength\topsep{0pt}
$\displaystyle n_{1}{}$
\end{minipage}};
\draw (473.64,193.49) node   [align=left] {\begin{minipage}[lt]{14.45pt}\setlength\topsep{0pt}
$\displaystyle n_{1}{}$
\end{minipage}};
\draw (420,333) node   [align=left] {\begin{minipage}[lt]{14.45pt}\setlength\topsep{0pt}
$\displaystyle n_{1}{}$
\end{minipage}};
\draw (470,327) node   [align=left] {\begin{minipage}[lt]{14.45pt}\setlength\topsep{0pt}
$\displaystyle n_{1}{}$
\end{minipage}};
\draw (410,233) node   [align=left] {\begin{minipage}[lt]{14.45pt}\setlength\topsep{0pt}
$\displaystyle n_{2}{}$
\end{minipage}};
\draw (470,223) node   [align=left] {\begin{minipage}[lt]{14.45pt}\setlength\topsep{0pt}
$\displaystyle n_{2}{}$
\end{minipage}};
\draw (410,297) node   [align=left] {\begin{minipage}[lt]{14.45pt}\setlength\topsep{0pt}
$\displaystyle n_{2}{}$
\end{minipage}};
\draw (470,287) node   [align=left] {\begin{minipage}[lt]{14.45pt}\setlength\topsep{0pt}
$\displaystyle n_{2}{}$
\end{minipage}};
\draw (551,199.5) node   [align=left] {\begin{minipage}[lt]{26.65pt}\setlength\topsep{0pt}
$\displaystyle n$
\end{minipage}};
\draw (589,200.5) node   [align=left] {\begin{minipage}[lt]{26.65pt}\setlength\topsep{0pt}
$\displaystyle n$
\end{minipage}};
\draw (551,299.5) node   [align=left] {\begin{minipage}[lt]{26.65pt}\setlength\topsep{0pt}
$\displaystyle n$
\end{minipage}};
\draw (589,299.5) node   [align=left] {\begin{minipage}[lt]{26.65pt}\setlength\topsep{0pt}
$\displaystyle n$
\end{minipage}};
\draw (540,263) node   [align=left] {\begin{minipage}[lt]{14.45pt}\setlength\topsep{0pt}
$\displaystyle 2n_{2}{}$
\end{minipage}};
\draw (690.7,319.5) node   [align=left] {\begin{minipage}[lt]{94.25pt}\setlength\topsep{0pt}
$\displaystyle n_{1} \cdot n_{2} \cdotp n\cdotp 2n_{1} \cdot 3$
\end{minipage}};

\end{tikzpicture}

\end{center}
\caption{The count of $ N((2),(1,1),(n_1,n_2),(n))= 10 n^2 n_1 n_2$ is polynomial in the entries of the partitions $\nu$.}\label{fig-piecewise}
\end{figure}
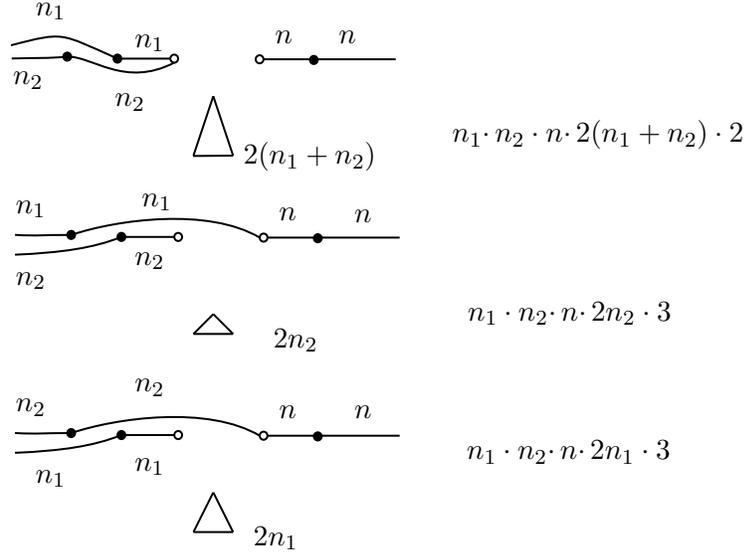

\end{example}

\begin{example}
In Figure \ref{fig-n1n2m1m2}, we demonstrate the count of $ N((2),(1,1),(n_1,n_2),(m_1,m_2))$. There exists a symmetric version for the second picture with the roles of the $n_i$ and $m_j$ exchanged (i.e., the first white vertex is $1$-valent and the second $3$-valent). For each, there is also the choice to pick $m_i$ among $i=1,2$. There are four choice of indices for the $n_k, n_l, m_i, m_j$ in the third picture. Only two of them appear together. The appearance depends on the inequalities $n_1>m_1$ or vice versa and $n_1>m_2$ or vice versa. These two inequalities imply, as $n_1+n_2=m_1+m_2$, two more inequalities of this form. We thus have four regions of polynomiality.

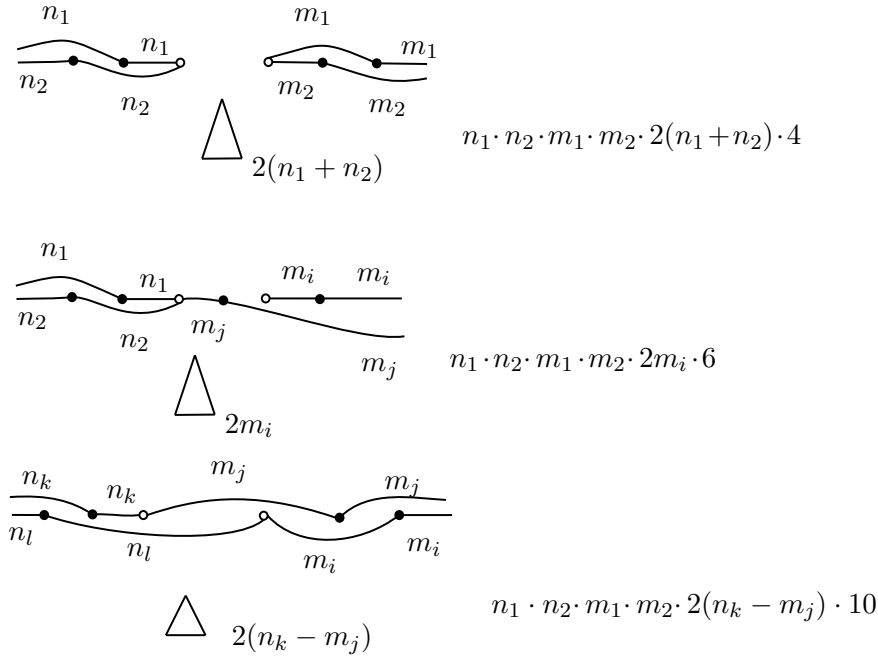
\begin{figure}
\begin{center}

\tikzset{every picture/.style={line width=0.75pt}} 

\begin{tikzpicture}[x=0.75pt,y=0.75pt,yscale=-1,xscale=1]

\draw    (450.34,121.7) -- (477.01,121.7) ;
\draw    (525.33,121.37) -- (550.7,121.72) ;
\draw    (490,170) -- (500,140) ;
\draw  [color={rgb, 255:red, 0; green, 0; blue, 0 }  ,draw opacity=1 ][fill={rgb, 255:red, 255; green, 255; blue, 255 }  ,fill opacity=1 ] (521.4,121.37) .. controls (521.4,120.24) and (522.28,119.33) .. (523.37,119.33) .. controls (524.45,119.33) and (525.33,120.25) .. (525.33,121.37) .. controls (525.32,122.5) and (524.44,123.41) .. (523.36,123.41) .. controls (522.27,123.41) and (521.39,122.49) .. (521.4,121.37) -- cycle ;
\draw  [color={rgb, 255:red, 0; green, 0; blue, 0 }  ,draw opacity=1 ][fill={rgb, 255:red, 255; green, 255; blue, 255 }  ,fill opacity=1 ] (477.01,121.7) .. controls (477.01,120.58) and (477.89,119.66) .. (478.98,119.67) .. controls (480.06,119.67) and (480.94,120.58) .. (480.94,121.71) .. controls (480.93,122.84) and (480.05,123.75) .. (478.97,123.75) .. controls (477.88,123.75) and (477,122.83) .. (477.01,121.7) -- cycle ;
\draw    (510,170) -- (500,140) ;
\draw    (489.83,170.18) -- (510,170) ;
\draw  [color={rgb, 255:red, 0; green, 0; blue, 0 }  ,draw opacity=1 ][fill={rgb, 255:red, 0; green, 0; blue, 0 }  ,fill opacity=1 ] (448.37,121.7) .. controls (448.37,120.57) and (449.26,119.66) .. (450.34,119.66) .. controls (451.43,119.66) and (452.3,120.58) .. (452.3,121.71) .. controls (452.3,122.83) and (451.42,123.75) .. (450.33,123.74) .. controls (449.25,123.74) and (448.37,122.83) .. (448.37,121.7) -- cycle ;
\draw  [color={rgb, 255:red, 0; green, 0; blue, 0 }  ,draw opacity=1 ][fill={rgb, 255:red, 0; green, 0; blue, 0 }  ,fill opacity=1 ] (548.73,121.71) .. controls (548.73,120.58) and (549.62,119.67) .. (550.7,119.67) .. controls (551.79,119.68) and (552.66,120.59) .. (552.66,121.72) .. controls (552.66,122.85) and (551.78,123.76) .. (550.69,123.76) .. controls (549.61,123.76) and (548.73,122.84) .. (548.73,121.71) -- cycle ;
\draw    (425,120.7) .. controls (437.67,120.37) and (454,136.63) .. (478.97,123.75) ;
\draw  [color={rgb, 255:red, 0; green, 0; blue, 0 }  ,draw opacity=1 ][fill={rgb, 255:red, 0; green, 0; blue, 0 }  ,fill opacity=1 ] (423.04,120.7) .. controls (423.04,119.57) and (423.92,118.66) .. (425.01,118.66) .. controls (426.09,118.66) and (426.97,119.58) .. (426.97,120.71) .. controls (426.97,121.83) and (426.08,122.75) .. (425,122.74) .. controls (423.91,122.74) and (423.04,121.83) .. (423.04,120.7) -- cycle ;
\draw    (396.67,114.97) .. controls (423.67,107.97) and (421,108.63) .. (450.34,121.7) ;
\draw    (397,121.63) .. controls (409.67,121.3) and (415.33,121.63) .. (425,120.7) ;
\draw    (480.27,241.04) .. controls (504.33,237.47) and (567.33,263.47) .. (592,259.8) ;
\draw    (589.36,350.05) -- (594.37,350.05) -- (616.03,350.05) ;
\draw    (471.67,410.67) -- (481.67,390.67) ;
\draw  [color={rgb, 255:red, 0; green, 0; blue, 0 }  ,draw opacity=1 ][fill={rgb, 255:red, 255; green, 255; blue, 255 }  ,fill opacity=1 ] (519.06,350.37) .. controls (519.06,349.24) and (519.95,348.33) .. (521.03,348.33) .. controls (522.12,348.33) and (523,349.25) .. (522.99,350.37) .. controls (522.99,351.5) and (522.11,352.41) .. (521.02,352.41) .. controls (519.94,352.41) and (519.06,351.49) .. (519.06,350.37) -- cycle ;
\draw  [color={rgb, 255:red, 0; green, 0; blue, 0 }  ,draw opacity=1 ][fill={rgb, 255:red, 255; green, 255; blue, 255 }  ,fill opacity=1 ] (458.34,350.04) .. controls (458.34,348.91) and (459.22,348) .. (460.31,348) .. controls (461.39,348) and (462.27,348.92) .. (462.27,350.04) .. controls (462.27,351.17) and (461.39,352.08) .. (460.3,352.08) .. controls (459.21,352.08) and (458.34,351.16) .. (458.34,350.04) -- cycle ;
\draw    (491.67,410.67) -- (481.67,390.67) ;
\draw    (471.67,410.67) -- (491.67,410.67) ;
\draw  [color={rgb, 255:red, 0; green, 0; blue, 0 }  ,draw opacity=1 ][fill={rgb, 255:red, 0; green, 0; blue, 0 }  ,fill opacity=1 ] (587.4,350.05) .. controls (587.4,348.92) and (588.28,348.01) .. (589.37,348.01) .. controls (590.45,348.01) and (591.33,348.93) .. (591.33,350.05) .. controls (591.33,351.18) and (590.45,352.09) .. (589.36,352.09) .. controls (588.27,352.09) and (587.4,351.17) .. (587.4,350.05) -- cycle ;
\draw  [color={rgb, 255:red, 0; green, 0; blue, 0 }  ,draw opacity=1 ][fill={rgb, 255:red, 0; green, 0; blue, 0 }  ,fill opacity=1 ] (557.33,351.37) .. controls (557.33,350.25) and (558.21,349.33) .. (559.3,349.34) .. controls (560.38,349.34) and (561.26,350.25) .. (561.26,351.38) .. controls (561.26,352.51) and (560.38,353.42) .. (559.29,353.42) .. controls (558.2,353.42) and (557.33,352.5) .. (557.33,351.37) -- cycle ;
\draw    (559.29,351.38) .. controls (573.67,336.63) and (597.67,341.63) .. (612.96,342.41) ;
\draw    (462.27,350.04) .. controls (485.27,342.31) and (513.33,336.3) .. (557.33,351.37) ;
\draw  [color={rgb, 255:red, 0; green, 0; blue, 0 }  ,draw opacity=1 ][fill={rgb, 255:red, 0; green, 0; blue, 0 }  ,fill opacity=1 ] (576.07,122.03) .. controls (576.07,120.91) and (576.95,119.99) .. (578.03,120) .. controls (579.12,120) and (580,120.91) .. (580,122.04) .. controls (579.99,123.17) and (579.11,124.08) .. (578.03,124.08) .. controls (576.94,124.08) and (576.06,123.16) .. (576.07,122.03) -- cycle ;
\draw    (523.37,119.33) .. controls (550.37,112.33) and (548.69,108.97) .. (578.03,122.04) ;
\draw    (578.03,122.04) -- (603.4,122.38) ;
\draw    (550.7,121.72) .. controls (573.33,128.02) and (581.3,133.56) .. (603.33,129.63) ;
\draw    (449.67,241.04) -- (476.34,241.04) ;
\draw  [color={rgb, 255:red, 0; green, 0; blue, 0 }  ,draw opacity=1 ][fill={rgb, 255:red, 255; green, 255; blue, 255 }  ,fill opacity=1 ] (476.34,241.04) .. controls (476.34,239.91) and (477.22,239) .. (478.31,239) .. controls (479.39,239) and (480.27,239.92) .. (480.27,241.04) .. controls (480.27,242.17) and (479.39,243.08) .. (478.3,243.08) .. controls (477.21,243.08) and (476.34,242.16) .. (476.34,241.04) -- cycle ;
\draw  [color={rgb, 255:red, 0; green, 0; blue, 0 }  ,draw opacity=1 ][fill={rgb, 255:red, 0; green, 0; blue, 0 }  ,fill opacity=1 ] (447.7,241.03) .. controls (447.71,239.9) and (448.59,238.99) .. (449.67,238.99) .. controls (450.76,239) and (451.64,239.91) .. (451.63,241.04) .. controls (451.63,242.17) and (450.75,243.08) .. (449.67,243.08) .. controls (448.58,243.08) and (447.7,242.16) .. (447.7,241.03) -- cycle ;
\draw    (424.34,240.04) .. controls (437,239.7) and (453.33,255.97) .. (478.3,243.08) ;
\draw  [color={rgb, 255:red, 0; green, 0; blue, 0 }  ,draw opacity=1 ][fill={rgb, 255:red, 0; green, 0; blue, 0 }  ,fill opacity=1 ] (422.37,240.03) .. controls (422.37,238.9) and (423.26,237.99) .. (424.34,237.99) .. controls (425.43,238) and (426.3,238.91) .. (426.3,240.04) .. controls (426.3,241.17) and (425.42,242.08) .. (424.33,242.08) .. controls (423.25,242.08) and (422.37,241.16) .. (422.37,240.03) -- cycle ;
\draw    (396,234.3) .. controls (423,227.3) and (420.33,227.97) .. (449.67,241.04) ;
\draw    (396.33,240.97) .. controls (409,240.63) and (414.67,240.97) .. (424.34,240.04) ;
\draw    (522.03,240.7) -- (590.67,240.71) ;
\draw  [color={rgb, 255:red, 0; green, 0; blue, 0 }  ,draw opacity=1 ][fill={rgb, 255:red, 255; green, 255; blue, 255 }  ,fill opacity=1 ] (520.06,240.7) .. controls (520.06,239.57) and (520.95,238.66) .. (522.03,238.66) .. controls (523.12,238.66) and (524,239.58) .. (523.99,240.71) .. controls (523.99,241.83) and (523.11,242.75) .. (522.02,242.74) .. controls (520.94,242.74) and (520.06,241.83) .. (520.06,240.7) -- cycle ;
\draw  [color={rgb, 255:red, 0; green, 0; blue, 0 }  ,draw opacity=1 ][fill={rgb, 255:red, 0; green, 0; blue, 0 }  ,fill opacity=1 ] (547.4,241.05) .. controls (547.4,239.92) and (548.28,239.01) .. (549.37,239.01) .. controls (550.45,239.01) and (551.33,239.93) .. (551.33,241.05) .. controls (551.33,242.18) and (550.45,243.09) .. (549.36,243.09) .. controls (548.27,243.09) and (547.4,242.17) .. (547.4,241.05) -- cycle ;
\draw  [color={rgb, 255:red, 0; green, 0; blue, 0 }  ,draw opacity=1 ][fill={rgb, 255:red, 0; green, 0; blue, 0 }  ,fill opacity=1 ] (498.7,241.7) .. controls (498.71,240.57) and (499.59,239.66) .. (500.67,239.66) .. controls (501.76,239.66) and (502.64,240.58) .. (502.63,241.71) .. controls (502.63,242.83) and (501.75,243.75) .. (500.67,243.74) .. controls (499.58,243.74) and (498.7,242.83) .. (498.7,241.7) -- cycle ;
\draw    (476.33,299.33) -- (486.33,269.33) ;
\draw    (496.33,299.33) -- (486.33,269.33) ;
\draw    (476.16,299.51) -- (496.33,299.33) ;
\draw    (410.34,350.04) -- (394,349.97) ;
\draw  [color={rgb, 255:red, 0; green, 0; blue, 0 }  ,draw opacity=1 ][fill={rgb, 255:red, 0; green, 0; blue, 0 }  ,fill opacity=1 ] (408.37,350.03) .. controls (408.37,348.9) and (409.26,347.99) .. (410.34,347.99) .. controls (411.43,348) and (412.3,348.91) .. (412.3,350.04) .. controls (412.3,351.17) and (411.42,352.08) .. (410.33,352.08) .. controls (409.25,352.08) and (408.37,351.16) .. (408.37,350.03) -- cycle ;
\draw    (393,340.97) .. controls (412.67,339.63) and (424.33,342.97) .. (434.67,349.7) ;
\draw  [color={rgb, 255:red, 0; green, 0; blue, 0 }  ,draw opacity=1 ][fill={rgb, 255:red, 0; green, 0; blue, 0 }  ,fill opacity=1 ] (432.7,349.7) .. controls (432.71,348.57) and (433.59,347.66) .. (434.67,347.66) .. controls (435.76,347.66) and (436.64,348.58) .. (436.63,349.71) .. controls (436.63,350.83) and (435.75,351.75) .. (434.67,351.74) .. controls (433.58,351.74) and (432.7,350.83) .. (432.7,349.7) -- cycle ;
\draw    (410.34,350.04) .. controls (443,361.3) and (505.68,365.61) .. (521.02,352.41) ;
\draw    (434.67,349.7) .. controls (447.34,349.37) and (448.67,350.97) .. (458.34,350.04) ;
\draw    (522.99,350.37) .. controls (538.67,367.63) and (569.33,365.63) .. (589.36,350.05) ;

\draw (468.67,113.67) node   [align=left] {\begin{minipage}[lt]{14.45pt}\setlength\topsep{0pt}
$\displaystyle n_{1}{}$
\end{minipage}};
\draw (554.14,98.39) node   [align=left] {\begin{minipage}[lt]{26.65pt}\setlength\topsep{0pt}
$\displaystyle m_{1}{}$
\end{minipage}};
\draw (545.67,135.5) node   [align=left] {\begin{minipage}[lt]{26.65pt}\setlength\topsep{0pt}
$\displaystyle m_{2}{}$
\end{minipage}};
\draw (418.67,97.67) node   [align=left] {\begin{minipage}[lt]{14.45pt}\setlength\topsep{0pt}
$\displaystyle n_{1}{}$
\end{minipage}};
\draw (407.29,132.52) node   [align=left] {\begin{minipage}[lt]{14.45pt}\setlength\topsep{0pt}
$\displaystyle n_{2}{}$
\end{minipage}};
\draw (706.67,159.17) node   [align=left] {\begin{minipage}[lt]{128.25pt}\setlength\topsep{0pt}
$\displaystyle n_{1} \cdotp n_{2} \cdot m_{1} \cdotp m_{2} \cdotp 2( n_{1} +n_{2}) \cdot 4$
\end{minipage}};
\draw (458.67,143.67) node   [align=left] {\begin{minipage}[lt]{14.45pt}\setlength\topsep{0pt}
$\displaystyle n_{2}{}$
\end{minipage}};
\draw (550.67,175.5) node   [align=left] {\begin{minipage}[lt]{53.45pt}\setlength\topsep{0pt}
$\displaystyle 2( n_{1} +n_{2})$
\end{minipage}};
\draw (682,271.5) node   [align=left] {\begin{minipage}[lt]{101.05pt}\setlength\topsep{0pt}
$\displaystyle n_{1} \cdot n_{2} \cdotp m_{1} \cdot m_{2} \cdotp 2m_{i} \cdot 6$
\end{minipage}};
\draw (543.6,412.46) node   [align=left] {\begin{minipage}[lt]{57.89pt}\setlength\topsep{0pt}
$\displaystyle 2( n_{k} -m_{j})$
\end{minipage}};
\draw (753.87,395.93) node   [align=left] {\begin{minipage}[lt]{176.98pt}\setlength\topsep{0pt}
$\displaystyle n_{1} \cdot n_{2} \cdotp m_{1} \cdotp m_{2} \cdotp 2( n_{k} -m_{j}) \cdot 10$
\end{minipage}};
\draw (608,115.17) node   [align=left] {\begin{minipage}[lt]{26.65pt}\setlength\topsep{0pt}
$\displaystyle m_{1}{}$
\end{minipage}};
\draw (591.67,144.5) node   [align=left] {\begin{minipage}[lt]{26.65pt}\setlength\topsep{0pt}
$\displaystyle m_{2}{}$
\end{minipage}};
\draw (468,233) node   [align=left] {\begin{minipage}[lt]{14.45pt}\setlength\topsep{0pt}
$\displaystyle n_{1}{}$
\end{minipage}};
\draw (418,217) node   [align=left] {\begin{minipage}[lt]{14.45pt}\setlength\topsep{0pt}
$\displaystyle n_{1}{}$
\end{minipage}};
\draw (406.63,251.85) node   [align=left] {\begin{minipage}[lt]{14.45pt}\setlength\topsep{0pt}
$\displaystyle n_{2}{}$
\end{minipage}};
\draw (458,263) node   [align=left] {\begin{minipage}[lt]{14.45pt}\setlength\topsep{0pt}
$\displaystyle n_{2}{}$
\end{minipage}};
\draw (547.67,228.83) node   [align=left] {\begin{minipage}[lt]{26.65pt}\setlength\topsep{0pt}
$\displaystyle m_{i}{}$
\end{minipage}};
\draw (585.67,229.83) node   [align=left] {\begin{minipage}[lt]{26.65pt}\setlength\topsep{0pt}
$\displaystyle m_{i}{}$
\end{minipage}};
\draw (502,257.17) node   [align=left] {\begin{minipage}[lt]{26.65pt}\setlength\topsep{0pt}
$\displaystyle m_{j}{}$
\end{minipage}};
\draw (588,276.5) node   [align=left] {\begin{minipage}[lt]{26.65pt}\setlength\topsep{0pt}
$\displaystyle m_{j}{}$
\end{minipage}};
\draw (537,304.83) node   [align=left] {\begin{minipage}[lt]{53.45pt}\setlength\topsep{0pt}
$\displaystyle 2m_{i}{}$
\end{minipage}};
\draw (408.67,332.67) node   [align=left] {\begin{minipage}[lt]{14.45pt}\setlength\topsep{0pt}
$\displaystyle n_{k}{}$
\end{minipage}};
\draw (401.96,360.85) node   [align=left] {\begin{minipage}[lt]{14.45pt}\setlength\topsep{0pt}
$\displaystyle n_{l}{}$
\end{minipage}};
\draw (451.33,339.67) node   [align=left] {\begin{minipage}[lt]{14.45pt}\setlength\topsep{0pt}
$\displaystyle n_{k}{}$
\end{minipage}};
\draw (512,328.34) node   [align=left] {\begin{minipage}[lt]{26.65pt}\setlength\topsep{0pt}
$\displaystyle m_{j}{}$
\end{minipage}};
\draw (599.67,335.83) node   [align=left] {\begin{minipage}[lt]{26.65pt}\setlength\topsep{0pt}
$\displaystyle m_{j}{}$
\end{minipage}};
\draw (461.63,368.85) node   [align=left] {\begin{minipage}[lt]{14.45pt}\setlength\topsep{0pt}
$\displaystyle n_{l}{}$
\end{minipage}};
\draw (558.67,374.83) node   [align=left] {\begin{minipage}[lt]{26.65pt}\setlength\topsep{0pt}
$\displaystyle m_{i}{}$
\end{minipage}};
\draw (611,366.17) node   [align=left] {\begin{minipage}[lt]{26.65pt}\setlength\topsep{0pt}
$\displaystyle m_{i}{}$
\end{minipage}};

\end{tikzpicture}

\end{center}

\caption{The count of $N((2),(1,1),(n_1,n_2),(m_1,m_2))$.}\label{fig-n1n2m1m2}

\end{figure}

\end{example}

With the last example, we demonstrate the piecewise polynomial behaviour also occurs for other choices of $\mu_1$.

\begin{example}
Let $\mu_1=(c)$ provide a point of full contact order with the zero section, and $\mu_2=(1,\ldots,1)$ only trivial contact order with the infinity section. Let $\nu_1=\nu_2=(x)$ provide full contact order with the zero and infinity fibers. We view $c$ as fixed and $x$ as variable, and aim to show that the count of $N(\mu,\nu)$ is polynomial in $x$ for these choices of $\mu$ and $\nu$.

We use the tool of counting subfloor diagrams for this purpose. Any fork one can attach to a subfloor diagram must have one end of weight $c$, and so it must connect all the subfloors. Thus we must have $c$ subfloors containing one white vertex each. The first such vertex has to be adjacent to an incoming edge of weight $\nu$ and to a black vertex, and the last to an outgoing edge of weight $\nu$ and a black vertex. The remaining white vertices have no adjacent vertices, see Figure \ref{fig-nunuc}. The sequence of divergences of the white vertices of this unique subfloor diagram which contributes to the count is thus $(x,0,\ldots,0,-x)$. The multiplicity of the subfloor diagram is the product of the weights of its bounded edges, which equals $x^2$, times the sum of multiplicities of the possible forks that can be attached. We compute the latter using the lattice path algorithm for the polygon depicted in Figure \ref{fig-nunuc}. In the recursion of the lattice path algorithm, we can never complete parallelograms, as we cannot make intermediate steps on the bottom edge which is required to be dual to one end of weight $c$. Thus we always cut triangles, and accordingly obtain only one suitable subdivision which is of multiplicity $x^{c-2}\cdot (x\cdot c)$. Combined with the weights of the edges, we obtain the multiplicity $c\cdot x^{c+1} $ for our subfloor diagram, and as this was the unique subfloor diagram to be counted, we have
\begin{equation*}
N((c),(1,\ldots,1), (x),(x))= c\cdot x^{c+1}
\end{equation*}
which is polynomial in $x$ for fixed $c$, as expected.

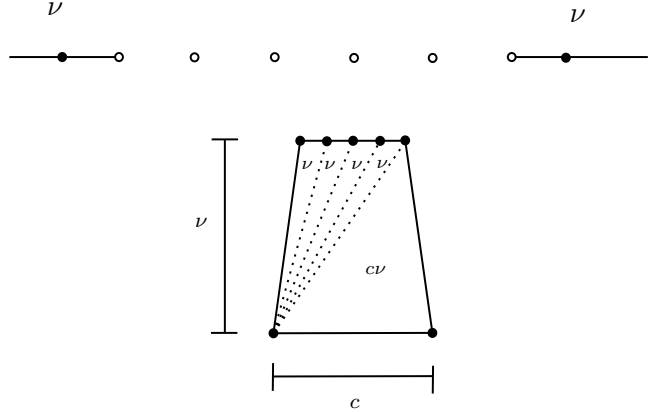
\begin{figure}
\begin{center}

\tikzset{every picture/.style={line width=0.75pt}} 

\begin{tikzpicture}[x=0.75pt,y=0.75pt,yscale=-1,xscale=1]

\draw    (270,121.37) -- (323.34,121.37) ;
\draw    (523.36,121.37) -- (592,121.37) ;
\draw    (403.18,260.78) -- (416.68,163.63) ;
\draw  [color={rgb, 255:red, 0; green, 0; blue, 0 }  ,draw opacity=1 ][fill={rgb, 255:red, 255; green, 255; blue, 255 }  ,fill opacity=1 ] (441.89,121.82) .. controls (441.9,120.69) and (442.78,119.78) .. (443.86,119.78) .. controls (444.95,119.78) and (445.83,120.7) .. (445.82,121.83) .. controls (445.82,122.96) and (444.94,123.87) .. (443.86,123.87) .. controls (442.77,123.86) and (441.89,122.95) .. (441.89,121.82) -- cycle ;
\draw  [color={rgb, 255:red, 0; green, 0; blue, 0 }  ,draw opacity=1 ][fill={rgb, 255:red, 255; green, 255; blue, 255 }  ,fill opacity=1 ] (481.52,121.82) .. controls (481.52,120.69) and (482.4,119.78) .. (483.49,119.78) .. controls (484.57,119.78) and (485.45,120.7) .. (485.45,121.83) .. controls (485.45,122.96) and (484.56,123.87) .. (483.48,123.87) .. controls (482.39,123.86) and (481.52,122.95) .. (481.52,121.82) -- cycle ;
\draw  [color={rgb, 255:red, 0; green, 0; blue, 0 }  ,draw opacity=1 ][fill={rgb, 255:red, 255; green, 255; blue, 255 }  ,fill opacity=1 ] (521.4,121.37) .. controls (521.4,120.24) and (522.28,119.33) .. (523.37,119.33) .. controls (524.45,119.33) and (525.33,120.25) .. (525.33,121.37) .. controls (525.32,122.5) and (524.44,123.41) .. (523.36,123.41) .. controls (522.27,123.41) and (521.39,122.49) .. (521.4,121.37) -- cycle ;
\draw  [color={rgb, 255:red, 0; green, 0; blue, 0 }  ,draw opacity=1 ][fill={rgb, 255:red, 0; green, 0; blue, 0 }  ,fill opacity=1 ] (414.72,163.62) .. controls (414.72,162.49) and (415.6,161.58) .. (416.69,161.58) .. controls (417.77,161.59) and (418.65,162.5) .. (418.65,163.63) .. controls (418.65,164.76) and (417.76,165.67) .. (416.68,165.67) .. controls (415.59,165.66) and (414.72,164.75) .. (414.72,163.62) -- cycle ;
\draw  [color={rgb, 255:red, 0; green, 0; blue, 0 }  ,draw opacity=1 ][fill={rgb, 255:red, 0; green, 0; blue, 0 }  ,fill opacity=1 ] (401.22,260.78) .. controls (401.22,259.65) and (402.1,258.74) .. (403.19,258.74) .. controls (404.27,258.74) and (405.15,259.66) .. (405.15,260.79) .. controls (405.15,261.92) and (404.26,262.83) .. (403.18,262.83) .. controls (402.09,262.82) and (401.22,261.91) .. (401.22,260.78) -- cycle ;
\draw  [color={rgb, 255:red, 0; green, 0; blue, 0 }  ,draw opacity=1 ][fill={rgb, 255:red, 255; green, 255; blue, 255 }  ,fill opacity=1 ] (401.89,121.42) .. controls (401.89,120.3) and (402.77,119.38) .. (403.86,119.39) .. controls (404.94,119.39) and (405.82,120.3) .. (405.82,121.43) .. controls (405.82,122.56) and (404.94,123.47) .. (403.85,123.47) .. controls (402.77,123.47) and (401.89,122.55) .. (401.89,121.42) -- cycle ;
\draw  [color={rgb, 255:red, 0; green, 0; blue, 0 }  ,draw opacity=1 ][fill={rgb, 255:red, 255; green, 255; blue, 255 }  ,fill opacity=1 ] (323.34,121.37) .. controls (323.34,120.24) and (324.22,119.33) .. (325.31,119.33) .. controls (326.39,119.33) and (327.27,120.25) .. (327.27,121.38) .. controls (327.27,122.51) and (326.39,123.42) .. (325.3,123.42) .. controls (324.21,123.41) and (323.34,122.5) .. (323.34,121.37) -- cycle ;
\draw  [color={rgb, 255:red, 0; green, 0; blue, 0 }  ,draw opacity=1 ][fill={rgb, 255:red, 255; green, 255; blue, 255 }  ,fill opacity=1 ] (361.44,121.51) .. controls (361.44,120.38) and (362.33,119.47) .. (363.41,119.47) .. controls (364.5,119.47) and (365.38,120.39) .. (365.37,121.51) .. controls (365.37,122.64) and (364.49,123.55) .. (363.4,123.55) .. controls (362.32,123.55) and (361.44,122.63) .. (361.44,121.51) -- cycle ;
\draw    (469.65,163.45) -- (483.36,260.6) ;
\draw    (416.68,163.63) -- (470.02,163.63) ;
\draw  [color={rgb, 255:red, 0; green, 0; blue, 0 }  ,draw opacity=1 ][fill={rgb, 255:red, 0; green, 0; blue, 0 }  ,fill opacity=1 ] (481.39,260.6) .. controls (481.39,259.47) and (482.28,258.56) .. (483.36,258.56) .. controls (484.45,258.57) and (485.32,259.48) .. (485.32,260.61) .. controls (485.32,261.74) and (484.44,262.65) .. (483.35,262.65) .. controls (482.27,262.64) and (481.39,261.73) .. (481.39,260.6) -- cycle ;
\draw  [color={rgb, 255:red, 0; green, 0; blue, 0 }  ,draw opacity=1 ][fill={rgb, 255:red, 0; green, 0; blue, 0 }  ,fill opacity=1 ] (467.69,163.44) .. controls (467.69,162.31) and (468.57,161.4) .. (469.66,161.4) .. controls (470.74,161.41) and (471.62,162.32) .. (471.62,163.45) .. controls (471.62,164.58) and (470.73,165.49) .. (469.65,165.49) .. controls (468.56,165.49) and (467.69,164.57) .. (467.69,163.44) -- cycle ;
\draw  [color={rgb, 255:red, 0; green, 0; blue, 0 }  ,draw opacity=1 ][fill={rgb, 255:red, 0; green, 0; blue, 0 }  ,fill opacity=1 ] (455.04,163.68) .. controls (455.04,162.55) and (455.92,161.64) .. (457.01,161.64) .. controls (458.09,161.64) and (458.97,162.56) .. (458.97,163.69) .. controls (458.97,164.82) and (458.08,165.73) .. (457,165.73) .. controls (455.91,165.72) and (455.04,164.81) .. (455.04,163.68) -- cycle ;
\draw  [color={rgb, 255:red, 0; green, 0; blue, 0 }  ,draw opacity=1 ][fill={rgb, 255:red, 0; green, 0; blue, 0 }  ,fill opacity=1 ] (441.39,163.62) .. controls (441.39,162.49) and (442.27,161.58) .. (443.36,161.58) .. controls (444.44,161.59) and (445.32,162.5) .. (445.32,163.63) .. controls (445.32,164.76) and (444.43,165.67) .. (443.35,165.67) .. controls (442.26,165.66) and (441.38,164.75) .. (441.39,163.62) -- cycle ;
\draw  [color={rgb, 255:red, 0; green, 0; blue, 0 }  ,draw opacity=1 ][fill={rgb, 255:red, 0; green, 0; blue, 0 }  ,fill opacity=1 ] (428.18,163.88) .. controls (428.18,162.75) and (429.06,161.84) .. (430.15,161.84) .. controls (431.23,161.84) and (432.11,162.76) .. (432.11,163.89) .. controls (432.11,165.01) and (431.22,165.93) .. (430.14,165.92) .. controls (429.05,165.92) and (428.18,165) .. (428.18,163.88) -- cycle ;
\draw  [dash pattern={on 0.84pt off 2.51pt}]  (403.18,260.78) -- (430.14,163.88) ;
\draw  [dash pattern={on 0.84pt off 2.51pt}]  (403.18,260.78) -- (443.35,163.63) ;
\draw  [dash pattern={on 0.84pt off 2.51pt}]  (403.18,260.78) -- (457,163.68) ;
\draw  [dash pattern={on 0.84pt off 2.51pt}]  (403.18,260.78) -- (470.02,163.63) ;
\draw    (403.18,260.78) -- (483.36,260.6) ;
\draw    (403.18,282.6) -- (483.36,282.42) ;
\draw    (402.85,289.88) -- (403.01,275.84) ;
\draw    (483.53,291.21) -- (483.69,277.18) ;
\draw    (378.68,260.26) -- (378.68,163.28) ;
\draw    (385.35,162.93) -- (372.01,162.93) ;
\draw    (385.01,260.6) -- (371.68,260.6) ;
\draw  [color={rgb, 255:red, 0; green, 0; blue, 0 }  ,draw opacity=1 ][fill={rgb, 255:red, 0; green, 0; blue, 0 }  ,fill opacity=1 ] (294.7,121.37) .. controls (294.71,120.24) and (295.59,119.33) .. (296.67,119.33) .. controls (297.76,119.33) and (298.64,120.25) .. (298.63,121.37) .. controls (298.63,122.5) and (297.75,123.41) .. (296.67,123.41) .. controls (295.58,123.41) and (294.7,122.49) .. (294.7,121.37) -- cycle ;
\draw  [color={rgb, 255:red, 0; green, 0; blue, 0 }  ,draw opacity=1 ][fill={rgb, 255:red, 0; green, 0; blue, 0 }  ,fill opacity=1 ] (548.73,121.71) .. controls (548.73,120.58) and (549.62,119.67) .. (550.7,119.67) .. controls (551.79,119.68) and (552.66,120.59) .. (552.66,121.72) .. controls (552.66,122.85) and (551.78,123.76) .. (550.69,123.76) .. controls (549.61,123.76) and (548.73,122.84) .. (548.73,121.71) -- cycle ;

\draw (298.71,97.37) node   [align=left] {\begin{minipage}[lt]{14.45pt}\setlength\topsep{0pt}
$\displaystyle \nu $
\end{minipage}};
\draw (570.44,100.59) node   [align=left] {\begin{minipage}[lt]{26.65pt}\setlength\topsep{0pt}
$\displaystyle \nu $
\end{minipage}};
\draw (424.31,175.93) node  [font=\tiny] [align=left] {\begin{minipage}[lt]{10.37pt}\setlength\topsep{0pt}
$\displaystyle \nu $
\end{minipage}};
\draw (435.73,175.93) node  [font=\tiny] [align=left] {\begin{minipage}[lt]{10.37pt}\setlength\topsep{0pt}
$\displaystyle \nu $
\end{minipage}};
\draw (449.38,175.99) node  [font=\tiny] [align=left] {\begin{minipage}[lt]{10.37pt}\setlength\topsep{0pt}
$\displaystyle \nu $
\end{minipage}};
\draw (462.03,175.75) node  [font=\tiny] [align=left] {\begin{minipage}[lt]{10.37pt}\setlength\topsep{0pt}
$\displaystyle \nu $
\end{minipage}};
\draw (456.28,229.24) node  [font=\tiny] [align=left] {\begin{minipage}[lt]{10.37pt}\setlength\topsep{0pt}
$\displaystyle c\nu $
\end{minipage}};
\draw (448.69,296.28) node  [font=\scriptsize] [align=left] {\begin{minipage}[lt]{10.37pt}\setlength\topsep{0pt}
$\displaystyle c$
\end{minipage}};
\draw (370.68,204.84) node  [font=\scriptsize] [align=left] {\begin{minipage}[lt]{10.37pt}\setlength\topsep{0pt}
$\displaystyle \nu $
\end{minipage}};

\end{tikzpicture}

\end{center}

\caption{The only subfloor diagram contributing to the count of $N((c),(1,\ldots,1), (x),(x))$. Below, the lattice path that determines the possible fork to be attached to the subfloor diagram, together with the unique subdivision obtained via the recursion. In each triangle appearing in the subdivision, we mark its area to keep track of the multiplicity of the path.}\label{fig-nunuc}

\end{figure}
\end{example}

\bibliographystyle{siam} 
\bibliography{DescendantFloorCalculus}

\end{document}